\newtheorem{thm}{Theorem}[section]
\newtheorem{DEF}[thm]{Definition}
\newtheorem{cor}[thm]{Corollary}
\newtheorem{prop}[thm]{Proposition}
\newtheorem{lem}[thm]{Lemma}
\newcommand{\p}{\mathfrak{p}}
\newcommand{\h}{\mathfrak{h}}
\newcommand{\F}{\mathcal{F}}
\newcommand{\n}{\mathfrak{n}}
\newcommand{\R}{\mathbb{R}}
\renewcommand{\P}{\mathcal{P}}
\newif\ifpersonalnote \personalnotetrue
\newcommand{\personalnote}[1]{%
  \ifpersonalnote \footnote{\textcolor{red}{\bf #1}}
  \fi
}
\newcommand{\personal}[1]{%
  \ifpersonalnote \textcolor{blue}{\bf\footnotesize#1}
  \fi
}
\newcommand{\personalnoteOFF}{%
  \ifpersonalnote\personalnotefalse}
\title{Local rigidity of certain actions of nilpotent-by-cyclic groups on the sphere}
\author{Mao Okada}
\begin{document}
\maketitle
\begin{abstract}
Let $G = SU(n, 1)$, $n \geq 2$ be the orientation-preserving isometry group of the complex hyperbolic space $\mathbb{H}_\mathbb{C}^n$ with an Iwasawa decomposition $G = KAN$.
We prove local rigidity of a family of certain actions of a subgroup $\Gamma \subset AN$ on the imaginary boundary $\partial\mathbb{H}_\mathbb{C}^n = S^{2n-1}$.
\end{abstract}
\section{Introduction}
Let $\Gamma$ be a finitely generated group, and $G$ a topological group.
Consider the space $\mathrm{Hom}(\Gamma, G)$ of homomorphisms from $\Gamma$ into $G$ equipped with the topology induced from the product topology of $G^\Gamma$.
Two homomorphisms $\rho_1, \rho_2 \in \mathrm{Hom}(\Gamma, G)$ are said to be \textit{conjugate} to each other if there exists an element $g \in G$ such that $g\rho_1(\gamma)g^{-1} = \rho_2(\gamma)$ for all $\gamma \in \Gamma$.
A homomorphism $\rho \in \mathrm{Hom}(\Gamma, G)$ is said to be \textit{locally rigid} if the conjugacy class of $\rho$ is a neighborhood of $\rho$.
In this paper, we will study local rigidity in a broader sense:
A family $\mathcal{A} \subset \mathrm{Hom}(\Gamma, G)$ of homomorphisms is \textit{locally rigid} if the set of homomorphisms conjugate to certain elements of $\mathcal{A}$ is a neighborhood of $\mathcal{A}$.
Our main interest is the case where $G = \mathrm{Diff}(M)$, the group of $C^\infty$-diffeomorphisms of a compact manifold $M$ with $C^r$-topology $(r = 0, 1, \dots, \infty)$.
In this case, we say that the family $\mathcal{A}$ of actions of $\Gamma$ on $M$ is $C^r$-\textit{locally rigid}.

A typical example of local rigidity of a family of actions is a result of Ghys \cite{Ghys}, which shows $C^r$-local rigidity $(3 \leq r \leq \infty)$ of the family of group actions defined as follows.
Let $\Gamma = \pi_1(\Sigma_g)$ be the fundamental group of an oriented closed surface of genus $g \geq 2$.
Consider the family of embeddings of $\Gamma$ into $PSL(2, \mathbb{R})$ as cocompact lattices.
The standard action of $PSL(2, \mathbb{R})$ on $S^1 = \mathbb{RP}^1$ induces a family of actions of $\Gamma$ on $S^1$, which is locally rigid.
Note that the family is locally compact since $\Gamma$ is a finitely generated group and $PSL(2, \mathbb{R})$ is a finite-dimensional Lie group.

Asaoka \cite{Asaoka} proved $C^2$-local rigidity of a family of actions of the solvable group
$BS(n, k) = \langle a, b_1, \dots, b_n \mid ab_i a^{-1} = b_i^k, b_ib_j = b_j b_i\,\;(i,j = 1, \dots, n) \rangle, \,(n \geq 2, k \geq 2)$ on the sphere $S^n$.
The family of actions are induced by a family of embeddings $\iota:\Gamma \to SO(n+1, 1)$ of $\Gamma$ into $SO(n+1,1)$, which acts on the real hyperbolic space $\mathbb{H}^{n+1}_{\mathbb{R}}$ by isometries in a canonical manner and thus on the imaginary boundary $\partial\mathbb{H}^{n+1}_{\mathbb{R}} = S^n$.
We remark that the images $\iota(\Gamma)$ of the embeddings is not discrete in $SO(n+1,1)$.
It should be mentioned that the statement also holds for $n = 1$, while the family is contained in a single conjugacy class, i.e., such actions are locally rigid.
This fact is a part of the work of Burslem-Wilkinson \cite{BW}.

The result of Asaoka with $\mathbb{H}_\mathbb{R}$ replaced by $\mathbb{H}_\mathbb{C}$ is the statement of our theorem; we proved local rigidity of a family of actions of a solvable group $\Gamma$ on the imaginary boundary $\partial\mathbb{H}^{n+1}_{\mathbb{C}} = S^{2n+1}$ of the complex hyperbolic space.
The groups $\Gamma$ are defined as follows.
Let $SU(n+1, 1) = KAN$ be an Iwasawa decomposition, and $\Lambda$ a lattice of $N$.
As $N$ is a normal subgroup of $AN$, $A$ acts on $N$ by conjugation.
Fix a nontrivial element $a \in A$ which preserves $\Lambda \subset N$, i.e.,  $a\Lambda a^{-1} \subset \Lambda$.
Note that since $A$ is $1$-dimensional, the subgroup $\mathbb{Z} = \langle a \rangle \subset A$ generated by $a$ is a lattice of $A$.
Then we obtain a subgroup $\Gamma = \mathbb{Z} \ltimes \Lambda$ of $SU(n+1, 1)$.
Such a $\Gamma$ will be called a \textit{standard subgroup} of $SU(n+1, 1)$.
Note that $\Gamma$ is not discrete in $AN$.
In fact, the closure is isomorphic to $\mathbb{Z} \ltimes N$.

The action of a standard subgroup on $S^{2n+1}$ induced by the natural action of $SU(n+1, 1)$ on $\partial\mathbb{H}^{n+1}_{\mathbb{C}} = S^{2n+1}$ will be called a \textit{standard action}.
For a fixed standard subgroup $\Gamma \subset SU(n+1, 1)$, an embedding of $\Gamma$ into $SU(n+1, 1)$ as a standard subgroup is not unique.
Thus the family of such embeddings induces that of standard actions of $\Gamma$.
Now we state our main theorem.
\begin{thm}\label{MainThm}
Let $\Gamma$ be a standard subgroup of $SU(n+1, 1)$.
The family of standard actions of $\Gamma$ on $S^{2n+1}$ is $C^3$-locally rigid.
\end{thm}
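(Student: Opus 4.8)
The plan is to show that any action $\tilde\rho$ that is $C^3$-close to a standard action $\rho$ is $C^3$-conjugate to a standard action, by exploiting the hyperbolic dynamics of the cyclic generator $a$ together with the resonant (Heisenberg) structure of the nilpotent part $N$. Write $G = SU(n+1,1) = KAN$, let $p_+\in S^{2n+1}$ be the point fixed by the parabolic subgroup $AN\supset\Gamma$, and work in a chart centered at $p_+$. In these coordinates $\rho(a)$ is the linear contraction $\mathrm{diag}(\lambda^{-1}I_{2n},\lambda^{-2})$, where $\lambda>1$ is the dilation factor that $a$ induces on $N$; note that $\lambda$ is forced by the abstract relation $a\Lambda a^{-1}\subset\Lambda$ and is therefore the same across the whole family. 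Each $\rho(\gamma)$, $\gamma\in\Lambda$, fixes $p_+$ with unipotent derivative, acting as a left translation of the Heisenberg group $N$ away from $p_+$.

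First I would locate a common fixed point of $\tilde\rho(\Gamma)$. Since $\rho(a)$ has an attracting hyperbolic fixed point at $p_+$, structural stability gives $\tilde\rho(a)$ a nearby attracting fixed point $\tilde p$. One then shows $\tilde\rho(\gamma)(\tilde p)=\tilde p$ for all $\gamma\in\Lambda$ by combining the conjugacy relations $\tilde\rho(a)^m\tilde\rho(\gamma)\tilde\rho(a)^{-m}=\tilde\rho(a^m\gamma a^{-m})$ with the contraction toward $\tilde p$; here the two distinct contraction rates $\lambda^{-1}$ and $\lambda^{-2}$ must be treated separately, the slower (center) direction being the delicate one, since the naive rate estimate is borderline. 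After conjugating by a diffeomorphism close to the identity I may assume $\tilde p=p_+$ and, after a linear change of coordinates, that the linear part of $\tilde\rho(a)$ at $p_+$ is diagonal and close to $\mathrm{diag}(\lambda^{-1}I_{2n},\lambda^{-2})$.

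Next I would bring $\tilde\rho(a)$ to normal form near $p_+$. The eigenvalues $\lambda^{-1}$ (multiplicity $2n$) and $\lambda^{-2}$ (multiplicity $1$) satisfy the single resonance $\lambda^{-2}=\lambda^{-1}\cdot\lambda^{-1}$, so $\tilde\rho(a)$ is not linearizable; instead a Sternberg-type theorem for resonant contractions conjugates it to a polynomial normal form, namely a Heisenberg dilation. Because the resonance has degree $2$, controlling the single resonant quadratic term costs derivatives, and this is precisely where the $C^3$ hypothesis enters. I would then straighten the nilpotent part: with $\tilde\rho(a)$ in dilation normal form, the relation $\tilde\rho(a)\tilde\rho(\gamma)\tilde\rho(a)^{-1}=\tilde\rho(a\gamma a^{-1})$ together with renormalization by $\tilde\rho(a)^{\pm m}$ forces $\tilde\rho(\gamma)$ to be a left translation of $N$, the higher-order terms being annihilated by conjugating with the expanding dilation; simultaneously these relations pin the eigenvalues of $\tilde\rho(a)$ to the standard values $\lambda^{-1},\lambda^{-2}$. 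This centralizer / polynomial-rigidity argument places $\tilde\rho(\Gamma)$, near $p_+$, inside the finite-dimensional group $AN$, so that it coincides with a standard action on a neighborhood of $p_+$.

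Finally I would globalize. Every point of $S^{2n+1}$ except the repelling fixed point $p_-$ of $a$ is swept into a neighborhood of $p_+$ by forward iteration, so the local conjugacy $h$, which already intertwines $\tilde\rho(a)$ with the standard dilation, may be propagated by $h\mapsto\tilde\rho(a)^{-m}\circ h\circ\rho(a)^{m}$ to a conjugacy on $S^{2n+1}\setminus\{p_-\}$; its $a$-equivariance makes it automatically compatible with the $N$-action, hence with all of $\tilde\rho(\Lambda)$. Continuity and a symmetric argument at the isolated point $p_-$ then yield a conjugacy on all of $S^{2n+1}$. I expect the main obstacle to be the pair of steps dealing with the resonant normal form and the straightening of $\Lambda$: the Heisenberg resonance blocks full linearization and forces one to work with the quadratic normal form while carefully tracking the loss of regularity, and establishing the polynomial rigidity of the $2$-step nilpotent action against this background, together with the unequal-rate fixed-point argument, is the technical heart of the proof.
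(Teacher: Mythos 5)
Your overall architecture --- persistence of the global fixed point via the relations $a\gamma a^{-1}=\gamma^{m_\gamma}$ and the borderline contraction rates, a resonant Sternberg normal form for $\tilde\rho(a)$, rigidity of the nilpotent part forced by the group relations, and globalization by renormalization --- is the same as the paper's, and you correctly identify where the difficulty sits. But two of the steps, as you state them, have genuine gaps. First, in the globalization you propagate $h$ by $\tilde\rho(a)^{-m}\circ h\circ\rho(a)^{m}$ and assert that $a$-equivariance makes the extension ``automatically compatible'' with $\tilde\rho(\Lambda)$. It does not: to check $\bar h\circ\rho(b_i)=\tilde\rho(b_i)\circ\bar h$ at a point $x$ with $\rho(a)^m x$ near $p_+$ you must apply the local intertwining relation to $a^m b_i a^{-m}=b_i^{k^m}$, and the intermediate orbit points $\rho(b_i)^j\rho(a)^m x$, $0\le j\le k^m$, leave the neighborhood on which $h$ is known to conjugate. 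The paper's Proposition \ref{LtoG} deals with exactly this by a connectedness argument for the $\Lambda$-orbit outside a large compact set, and it propagates along the central element $c$ rather than $a$, which has the added benefit of covering the repelling fixed point $q^0$ in one stroke; your route leaves the smoothness of $\bar h$ at $p_-$ unaddressed (and there is no genuinely ``symmetric argument'' available there, since $\Lambda$ does not fix $p_-$).

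Second, the straightening of $N$. Conjugating by the expanding dilation only annihilates terms of weighted degree $\ge 2$; it shows that each $\tilde\rho(b_i)$ lies in the finite-dimensional jet group $\P_1$ of the paper, but $\P_1$ is strictly larger than the group of Heisenberg translations, and the centralizer-type relations alone do not force $\tilde\rho(\Lambda)$ into a conjugate of $N$. Closing this gap is the bulk of the paper: extension of the homomorphism from $\Lambda$ to all of $N$ by Mal'cev rigidity (Proposition \ref{extf}), pinning down $f(a)$ exactly --- not just its eigenvalues but also its resonant quadratic term --- via Heisenberg connections and dilations (Proposition \ref{persistcyclprop}), and a local rigidity theorem for graded Lie algebra homomorphisms of $\mathfrak{n}^+$ into the polynomial vector fields, proved through the operator $\Theta$ and the vanishing of the multilinear maps $\Phi,\Psi$ (Proposition \ref{nplushom}). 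You flag this as the technical heart but supply no argument for it. A smaller omission: Sternberg normalization controls only Taylor series, so one must still pass from formal (jet) conjugacy back to conjugacy of germs for the generators of $\Lambda$, which is the role of the paper's Proposition \ref{St1} and Lemma \ref{2_to_1} and is absent from your sketch.
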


Note that when we apply the above construction for $SU(n+1,1) =KAN$ to an Iwasawa decomposition $SO(n+1, 1) = KAN$, we obtain the family of group actions in the result of Asaoka.

Our strategy for the proof, which is similar to that of Asaoka's, can be described as follows.
Since the standard actions of $\Gamma$ admit a global fixed point in $S^{2n+1}$, the family of standard actions induce a family of homomorphisms from $\Gamma$ into the group $\mathcal{G}(\R^{2n+1}, \mathrm{O})$ of germs of diffeomorphisms of $\R^{2n+1}$ defined around $\mathrm{O} \in \R^{2n+1}$ and fixing $\mathrm{O}$.
The first step is to reduce our main theorem to local rigidity of the family in $\mathrm{Hom}(\Gamma, \mathcal{G}(\R^{2n+1}, \mathrm{O}))$, which is, so to speak,  local rigidity of local actions.
The Taylor expansion at $\mathrm{O}\in\R^{2n+1}$ induces a homomorphism from $\mathcal{G}(\R^{2n+1}, \mathrm{O})$ onto the group of formal transformations $\mathcal{F}(\R^{2n+1}, \mathrm{O})$ of $\R^{2n+1}$ fixing $O$.
The second step is to show that local rigidity of the family in $\mathrm{Hom}(\Gamma, \mathcal{G}(\R^{2n+1}, \mathrm{O}))$ follows from that of the induced family in $\mathrm{Hom}(\Gamma, \mathcal{F}(\R^{2n+1}, \mathrm{O}))$.
The last step is to prove local rigidity of the family in $\mathrm{Hom}(\Gamma, \mathcal{F}(\R^{2n+1}, \mathrm{O}))$.

A difficulty of the case $G = SU(n+1, 1)$, compared to the case $G = SO(n+1, 1)$, comes from differences in the dynamics of $A$ around the fixed point, where we fixed Iwasawa decomposition $G = KAN$.
For example, in \cite{Asaoka}, Asaoka applied a tool from the theory of dynamical systems called linearization to diffeomorphisms close to the action of a non-trivial element $a \in A$ , while in our case such diffeomorphisms cannot be linearized immediately.
So we used Sternberg's normalization \cite{Sternberg}; a modified version of linearization.

In Section \ref{std}, we will study the standard subgroups of $SU(n+1, 1)$.
In particular, we will give an explicit presentation of such a group.
In Section \ref{Stb}, we will begin with reviewing a proof of linearization of a diffeomorphism around a contracting fixed point.
The goal of Section \ref{Stb} is to give a proof of a normalization which will be used later.
In Section \ref{Heis}, we will set up terminology for spaces on which the Lie group $N$ acts simply transitively, called Heisenberg spaces.
The remaining sections are devoted to the proof of Theorem \ref{MainThm}.

\section{The family of standard actions}\label{std}
\subsection{The action of $SU(n+1, 1)$ on $S^{2n+1}$}\label{stdsu}
Let $SU(n + 1, 1) \subset GL(n+2, \mathbb{C})$ be the group of special linear transformations of $\mathbb{C}^{n+2} = \{ (z_0, \dots, z_{n+1}) \}$ preserving the Hermitian form
\[
\langle z, w \rangle =z_0\overline{w}_{n+1} + \sum_{i =1}^n z_i\overline{w}_i + z_{n+1}\overline{w}_0.
\]
Fix the Iwasawa decomposition $SU(n+1, 1) = KAN$, where\personalnote{$\mathfrak{u}(n+1, 1)\cap \mathfrak{u}(n+2)$ is \[
\left\{ \left.\left(
\begin{array}{ccc}
	ix-iy &-\overline{z}^T & iy\\
	z& U &z\\
	iy&-\overline{z}^T&ix-iy
\end{array} \right) \right| x,y \in \R, z \in \mathbb{C}^n, U \in \mathfrak{u}(n)\right\},
\] 
which is isomorphic to $\mathfrak{u}(n+1) \oplus \mathfrak{u}(1)$ via
\[
\left(
\begin{array}{ccc}
	ix-iy &-\overline{z}^T & iy\\
	z& U &z\\
	iy&-\overline{z}^T&ix-iy
\end{array} \right) \longmapsto \left(\left(
\begin{array}{cc}
	ix &-\overline{z}^T\\
	z& U 
\end{array} \right) , iy \right)
\]
}
\begin{align*}
K &=  \left\{ g \in SU(n+1, 1) \mid \overline{g}^T = g^{-1} \right\}, \\
A &=  \left\{ \left.\left(
\begin{array}{ccc}
	e^s & 0& 0\\
	0& I_n &0\\
	0&0&e^{-s}
\end{array} \right) \right| s \in \R \right\}, \\
N &=  \left\{\left. \left(
\begin{array}{ccc}
	1 &-\overline{z}^T & -\|z\|^2/2 - it\\
	0& I_n &z\\
	0&0&1
\end{array} \right) \right| z \in \mathbb{C}^n, t \in \R \right\}.
\end{align*}
The group $SU(n+1,1)$ naturally acts on the imaginary boundary $\partial\mathbb{H}_\mathbb{C}^{n+1}$ of the complex hyperbolic space.
In fact, since $SU(n+1,1)$ preserves the Hermitian form on $\mathbb{C}^{n+2}$, it also acts on the light cone $L =\{z\in\mathbb{C}^{n+2} | \langle z, z \rangle =0\}$ and its projectivization $\partial\mathbb{H}_\mathbb{C}^{n+1} = \mathbb{P}(L)\subset\mathbb{PC}^{n+1}$, which is diffeomorphic to the $(2n+1)$-dimensional sphere $S^{2n+1}$.
This natural action of $SU(n+1, 1)$ on $S^{2n+1}$ will be denoted by $\rho^0$.

The action $\rho^0$ of $G = SU(n+1, 1)$ on $S^{2n+1}$ can be described by the induced homomorphism $\rho^0_*:\mathfrak{g} = \mathrm{Lie}(G) \to \mathfrak{X}(S^{2n+1})$ of Lie algebras, where $\mathfrak{X}(S^{2n+1})$ is the Lie algebra of the smooth vector fields on $S^{2n+1}$.
To see the structure of the Lie algebra $\mathfrak{g} \subset \mathfrak{gl}(n+2, \mathbb{C})$ of $G$, put
\[
E = \left(
\begin{array}{ccc}
	1 & 0& 0\\
	0& 0 &0\\
	0&0&-1
\end{array} \right) \in \mathfrak{g}.
\]
Then $\mathfrak{g}$ is decomposed as $\mathfrak{g} = \bigoplus_{|\lambda| \leq 2, \lambda \in \mathbb{Z}} \mathfrak{g}^{(\lambda)}$, where $\mathfrak{g}^{(\lambda)} = \{ X \in \mathfrak{g} \mid [E, X] = \lambda X\}$.
More explicitly,
\[
\mathfrak{g}^{(0)} = \left\{ \left.\left(
\begin{array}{ccc}
	z & 0& 0\\
	0& U &0\\
	0&0& -\overline{z}
\end{array} \right) \right| z \in \mathbb{C} , U \in \mathfrak{u}(n), z + \mathrm{tr} \,U -\overline{z} = 0\right\},
\]
\[
\mathfrak{g}^{(-1)} = \left\{ \left.\left(
\begin{array}{ccc}
	0 & 0& 0\\
	\xi& 0 &0\\
	0&-\overline{\xi}^{T}&0
\end{array} \right) \right| \xi \in \mathbb{C}^n \right\}, 
\mathfrak{g}^{(+1)} = \left\{ \left.\left(
\begin{array}{ccc}
	0 & -\overline{\xi}^{T}& 0\\
	0& 0 &\xi\\
	0&0&0
\end{array} \right) \right| \xi \in \mathbb{C}^n \right\},
\]
\[
\mathfrak{g}^{(-2)} = \left\{ \left.\left(
\begin{array}{ccc}
	0 & 0& 0\\
	0& 0 &0\\
	-i\tau&0&0
\end{array} \right) \right| \tau \in \R \right\}, 
\mathfrak{g}^{(+2)} = \left\{ \left.\left(
\begin{array}{ccc}
	0 & 0& -i\tau\\
	0& 0 &0\\
	0&0&0
\end{array} \right) \right| \tau \in \R \right\}.
\]
Note that $\mathfrak{a} = \R E$, $\mathfrak{n} = \mathfrak{g}^{(+1)} \oplus \mathfrak{g}^{(+2)}$, where $\mathfrak{a}$ and $\mathfrak{n} $ are the Lie algebras of $A$ and $N$, respectively.
The vector field $\rho^0_*(E)$ vanishes at the points $p^0 = [1, 0, \dots, 0] \in \mathbb{P}(L) = S^{2n+1}$ and $q^0 = [0, \dots, 0, 1]$.
We will use the atlas $\{(\phi^0, S^{2n+1}\setminus\{q^0\}), (\psi^0, S^{2n+1}\setminus\{p^0\})\}$ defined by
\begin{align*}
\phi^0&:[1, z_1, \dots, z_{n+1}] \longmapsto (\mathrm{Re} z_1, \mathrm{Im} z_1, \dots, \mathrm{Re} z_n, \mathrm{Im} z_n, \mathrm{Im}z_{n+1}), \\
\psi^0&:[z_0, z_1, \dots, z_n, 1] \longmapsto (\mathrm{Im}z_0, \mathrm{Re} z_1, \mathrm{Im} z_1, \dots, \mathrm{Re} z_n, \mathrm{Im} z_n).
\end{align*}
These coordinate charts induce the homomorphisms $\phi^0_*, \psi^0_*: \mathfrak{X}(S^{2n+1}) \to \mathfrak{X}(\R^{2n+1})$ of Lie algebras.
The following lemma is a consequence of a straight-forward computation.
To reduce the notation, put $\partial_i = \partial/{\partial x_i} \in \mathfrak{X}(\R^{2n+1})$.
\begin{lem}\label{stdalg}
\begin{enumerate}
\item $\phi^0_*\circ \rho^0_*(E) = -x_1\partial_1 - \dots -x_{2n}\partial_{2n} -2 x_{2n+1}\partial_{2n+1}$
\item $\phi^0_*\circ \rho^0_*(\mathfrak{g}^{(-1)})$ is generated by $\partial_{2i-1} - x_{2i}\partial_{2n+1}, \partial_{2i} + x_{2i -1}\partial_{2n+1}$ $(1 \leq i \leq n)$.
\item $\phi^0_*\circ \rho^0_*(\mathfrak{g}^{(-2)})$ is generated by $\partial_{2n+1}$.
\end{enumerate}
\end{lem}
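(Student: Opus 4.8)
The plan is to read off each pushforward vector field directly from the linear action of $G$ on $\mathbb{C}^{n+2}$, using the standard description of the fundamental vector fields of a projective action. For $X \in \mathfrak{g}$ and a point $[v]$ in the chart $S^{2n+1}\setminus\{q^0\}$, I would write $v = (1, z_1, \dots, z_{n+1})^T$ and $Xv = (w_0, w_1, \dots, w_{n+1})^T$. Differentiating $t \mapsto [\exp(tX)v]$ at $t = 0$ and renormalizing the zeroth homogeneous coordinate to $1$ (i.e.\ dividing by $1 + tw_0$) shows that, in the affine coordinates $z_1, \dots, z_{n+1}$, the field $\rho^0_*(X)$ has $j$-th component
\[
\dot z_j = w_j - z_j w_0 \qquad (1 \le j \le n+1),\quad z_0 := 1.
\]
Since $\phi^0$ records the real and imaginary parts of $z_1, \dots, z_n$ together with $\mathrm{Im}\,z_{n+1}$, it then remains to substitute the three families of matrices and extract these real coordinates via $z_i = x_{2i-1} + i x_{2i}$ and $x_{2n+1} = \mathrm{Im}\,z_{n+1}$.

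Carrying this out is routine. For $X = E$ one gets $Xv = (1, 0, \dots, 0, -z_{n+1})^T$, hence $\dot z_j = -z_j$ for $1 \le j \le n$ and $\dot z_{n+1} = -2z_{n+1}$; taking real and imaginary parts yields (i). For the generator of $\mathfrak{g}^{(-1)}$ attached to $\xi \in \mathbb{C}^n$ one finds $w_0 = 0$, $w_j = \xi_j$ $(1 \le j \le n)$ and $w_{n+1} = -\overline{\xi}^T z$, so $\dot z_j = \xi_j$ and $\dot z_{n+1} = -\overline{\xi}^T z$; specializing to $\xi = e_i$ and $\xi = i e_i$ produces the two fields $\partial_{2i-1} - x_{2i}\partial_{2n+1}$ and $\partial_{2i} + x_{2i-1}\partial_{2n+1}$ of (ii). Finally, for the generator of $\mathfrak{g}^{(-2)}$ one has $w_{n+1} = -i\tau$ with all other $w_j = 0$, giving $\dot z_{n+1} = -i\tau$, hence $\partial_{2n+1}$ after rescaling, which is (iii).

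Since everything reduces to evaluating $Xv$ for six explicit matrices, I do not expect a genuine obstacle; the only points demanding care are bookkeeping. First, the correspondence between the complex velocities $\dot z_j$ and the real frame $\partial_1, \dots, \partial_{2n+1}$ must be applied consistently. Second, because $\phi^0$ omits $\mathrm{Re}\,z_{n+1}$ — which on $L$ is pinned to $-\tfrac{1}{2}\|z\|^2$ by the relation $\langle v, v\rangle = 0$ with $z_0 = 1$ — one should extract only $\mathrm{Im}\,\dot z_{n+1}$. This is legitimate because $\rho^0_*(X)$ is tangent to $L$, so the value of $\mathrm{Re}\,\dot z_{n+1}$ is already forced by the other components (one can check it equals $-\sum_i \mathrm{Re}(\overline{z_i}\,\dot z_i)$) and need not be tracked.
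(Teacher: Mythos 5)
Your proposal is correct and follows essentially the same route as the paper's own computation: differentiate $t \mapsto [\exp(tX)v]$ at $t=0$ after renormalizing the zeroth homogeneous coordinate, then read off the real coordinates via $\phi^0$ (your general formula $\dot z_j = w_j - z_jw_0$ is exactly the paper's rescaling $[e^t, z, e^{-t}z_{n+1}] = [1, e^{-t}z, e^{-2t}z_{n+1}]$ in the case $X=E$). The individual evaluations of $Xv$ and the resulting vector fields all agree with the paper's, and your remark that $\mathrm{Re}\,z_{n+1}$ is pinned by the light-cone relation correctly justifies discarding that component.
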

\personal{
\begin{proof}
Put $[1, z_1, \dots, z_{n+1}] := (\phi^0)^{-1}(x_1, \dots, x_{2n+1})$, $z: = (z_1, \dots, z_n) \in \mathbb{C}^n$.
(i) Use
\[
\exp tE =
\left(
\begin{array}{ccc}
	e^t & &\\
	& I_n &\\
	& & e^{-t}
\end{array} \right) \in SU(n+1, 1).
\]
\[
\exp tE
\left[
\begin{array}{c}
	1\\
	z\\
	z_{n+1}
\end{array} \right] = 
\left[
\begin{array}{c}
	e^t\\
	z\\
	e^{-t}z_{n+1}
\end{array} \right] = 
\left[
\begin{array}{c}
	1\\
	e^{-t}z\\
	e^{-2t}z_{n+1}
\end{array} \right] \xmapsto {\phi^0}  (e^{-t}\mathrm{Re} z_1, \dots, e^{-t}\mathrm{Im} z_n, e^{-2t}\mathrm{Im}z_{n+1})
\]
\[
 \xmapsto {\frac{d}{dt}|_{t = 0}}(-x_1, \dots, -x_{2n}, -2x_{2n+1}).
\]
(ii) Use
\[
\exp t \left(
\begin{array}{ccc}
	0 & 0& 0\\
	\xi& 0 &0\\
	0&-\overline{\xi}^{T}&0
\end{array} \right) =
\left(
\begin{array}{ccc}
	1 &0 & 0\\
	t\xi& I_n &0\\
	O(t^2)&-t\overline{\xi}^{T}&1
\end{array} \right).
\]
\[
\left(
\begin{array}{ccc}
	1 &0 & 0\\
	t\xi& I_n &0\\
	O(t^2)&-t\overline{\xi}^{T}&1
\end{array} \right)
\left[
\begin{array}{c}
	1\\
	z\\
	z_{n+1}
\end{array} \right] =
\left[
\begin{array}{c}
	1\\
	t\xi + z\\
	O(t^2) -t\overline{\xi}^{T} z + z_{n+1}
\end{array} \right] =  \mathrm{const.} + t
\left[
\begin{array}{c}
	0\\
	\xi\\
	-\overline{\xi}^{T} z
\end{array} \right] + O(t^2)
\]
\[
\xmapsto{\frac{d}{dt}\phi^0 \text{ at } t = 0}
\begin{cases}
\partial_{2j-1} +\mathrm{Im}(-e_j^{T} z)\partial_{2n+1} & ( \text{if } \xi = e_j ) \\
\partial_{2j} +\mathrm{Im}(ie_j^{T} z)\partial_{2n+1} & ( \text{if } \xi = ie_j )
\end{cases}
= \begin{cases}
\partial_{2j-1} -x_{2j}\partial_{2n+1}& ( \xi = e_j ) \\
\partial_{2j} +x_{2j-1}\partial_{2n+1}& (  \xi = ie_j )
\end{cases}
\]
(iii) Use
\[
\exp t \left(
\begin{array}{ccc}
	0 & 0& 0\\
	0& 0 &0\\
	-i&0&0
\end{array} \right) =
\left(
\begin{array}{ccc}
	1 &0 & 0\\
	0& I_n &0\\
	-it&0&1
\end{array} \right).
\]
\[
\left(
\begin{array}{ccc}
	1 &0 & 0\\
	0& I_n &0\\
	-it&0&1
\end{array} \right) \left[
\begin{array}{c}
	1\\
	z\\
	z_{n+1}
\end{array} \right] = \mathrm{const.} + t
\left[
\begin{array}{c}
	0\\
	0\\
	-i
\end{array} \right] \mapsto \mathrm{Im}(-i)\partial_{2n+1} = -\partial_{2n+1}.
\]
\end{proof}}

\subsection{The standard subgroups of $SU(n+1, 1)$}
Let $SU(n+1, 1) = KAN$ be an Iwasawa decomposition.
In this paper, we will study certain finitely-presented subgroups $\Gamma$ of $AN$.
\personalnote{$AN$ does not admit a lattice.
Thus we consider subgroups obtained as, so to speak,  products of lattices in $A$ and $N$.}
\begin{DEF}\label{stdsubgrp}
A subgroup $\Gamma \subset SU(n+1, 1)$ is said to be a \textup{standard subgroup} of $SU(n+1, 1)$ if there exist
\begin{itemize}
\item an Iwasawa decomposition  $SU(n+1, 1) = KAN$,
\item a lattice $\Lambda$ of $N$, and
\item a nontirivial element $a \in A$ with $a\Lambda a^{-1} \subset \Lambda$,
\end{itemize}
such that $\Gamma$ is generated by $a$ and $\Lambda$.
\end{DEF}

In the rest of this subsection, we will give explicit presentations of standard subgroups of $SU(n+1, 1)$.
It suffices to consider the Iwasawa decomposition as in \ref{stdsu}.
We will first describe presentations of lattices in $N$.
Let $\Lambda$ be a lattice of $N$.
Using a result of Mal'cev \cite{Malcev} for general connected simply-connected nilpotent Lie group, we obtain a basis $\{X_1, \dots, X_{2n+1}\}$ of the Lie algebra $\n$ of $N$ such that
\begin{enumerate}
\item $\{\exp X_1, \dots, \exp X_{2n+1}\}$ is a system of generators of $\Lambda$,
\item $\langle X_{i+1}, \dots, X_{2n+1}\rangle $ is an ideal of $\langle X_i, \dots, X_{2n+1}\rangle$ for $i = 1, \dots ,2n$, and
\item $[X_i, X_j] = \sum_k m_{ij}^k X_k$  with some rational constants $m_{ij}^k$.
\end{enumerate}  
We will identify the Lie algebra $\n = \mathfrak{g}^{(+1)}\oplus\mathfrak{g}^{(+2)}$ with $\mathbb{C}^n \oplus \R$ equipped with the bracket
\[
[(\xi, \tau), (\xi',\tau)] = (0, -2\Phi(\xi, \xi')),
\]
where $\Phi(\xi, \xi') = \mathrm{Im}(\overline{\xi'}^T\xi)$.
Then the Lie group $N$ can be identified with the Lie group $\mathbb{C}^n \times \mathbb{R} = \{(z,t)\}$ equipped with the product $(z, t) \cdot (z', t') = (z +z', t + t' - \Phi(z, z'))$.
By the condition(ii), we see that $X_{2n+1} \in [\mathfrak{n}, \mathfrak{n}] = \mathfrak{g}^{(+2)}$.
So $X_{2n+1} = (0, \tau)$ for some non-zero $\tau \in\R$.
Thus for $i =1, \dots, 2n$, $X_i = (\xi_i, \tau_i)$, where $\{\xi_1, \dots, \xi_{2n}\}$ is an $\R$-linear basis of $\mathbb{C}^n$.
Since $\mathrm{Ad}((z,t))(\xi, \tau) = (\xi, \tau - 2\Phi(z, \xi))$, there exists $g = (z, 0) \in N$ such that $\mathrm{Ad}(g)X_i = (\xi_i, 0)$ for $i = 1, \dots, 2n$.
Replacing $X_i$ with $\mathrm{Ad}(g)X_i $ if necessary, we may assume $\tau_i = 0$.
It follows that $m_{ij}^k = 0$ for $k \neq 2n+1$.
Set $m_{ij} = m_{ij}^{2n+1}$.
We may assume, by multiplying $X_{2n+1}$ by the inverse an integer if necessary, that $m_{ij}$ are integers.
Put $b_i = \exp X_i$ for $i = 1, \dots, 2n$ and $c = \exp X_{2n+1}$.
Then $\Lambda$ can be presented as follows:
\[
\Lambda = \langle b_1, \dots, b_{2n}, c \mid [b_i, b_j] = c^{m_{ij}}\text{ for }i,j = 1, \dots, 2n \rangle.
\]
To describe the matrix $(m_{ij})_{ij}$, fix the $\R$-linear basis $\{e_1, ie_1, \dots, e_n, ie_n\}$ of $\mathbb{C}^n$.
Then by the condition (iii), $2G^TJG = \tau(m_{ij})_{ij}$, where $G \in GL(2n, \R)$ correspends to the $\R$-linear basis $\{\xi_1, \dots, \xi_{2n}\}$ and $J \in GL(2n,\mathbb{R})$ corresponds to the multiplication by $i \in \mathbb{C}$ on $\mathbb{C}^n$.
Thus $(m_{ij})_{ij}$ is contained in the set
\[
\mathcal{M} = \{G^TJG \mid G \in GL(2n,\R)\} \cap M(2n, \mathbb{Z}),
\]
where $M(2n, \mathbb{Z})$ denotes the set of integer matrices.
Conversely, given a matrix $(m_{ij})_{ij} \in \mathcal{M}$, the group $\Lambda$ defined by the above presentation can be embedded into $N$ as a lattice. 

Since $\mathrm{Ad}(\exp sE)(\xi, \tau) = (e^s\xi, e^{2s}\tau)$ for $(\xi, \tau) \in \n$, the condition $e^s \in \mathbb{Z}$ is necessary for $\mathrm{Ad}(\exp sE)(\Lambda) \subset \Lambda$.
With the above presentation of $\Lambda$, we see that a standard subgroup $\Gamma$ of $SU(n+1, 1)$ has the following presentation:
\begin{equation}\label{nbc}
\Gamma = \langle a, b_1, \dots, b_{2n}, c \,|\, ab_ia^{-1}=b_i^k, \, aca^{-1}= c^{k^2}, \, [b_i, b_j] = c^{m_{ij}}\rangle,
\end{equation}
where $(m_{ij})_{ij} \in \mathcal{M}$ and $k \in \mathbb{Z}, k \geq 2$.

\subsection{The standard actions of a standard subgroups}
Let $\Gamma$ be a standard subgroup of $SU(n+1, 1)$.
By the action $\rho^0$ of $SU(n+1, 1)$ on $S^{2n+1}$, each embedding of $\Gamma$ into $SU(n+1, 1)$ as a standard subgroup induces an action of $\Gamma$ on $S^{2n+1}$.
We will call such an action of $\Gamma$ a \textit{standard action} of $\Gamma$ on $S^{2n+1}$.
The following is the main theorem of the present paper.

\setcounter{thm}{0}
\setcounter{section}{1}

\begin{thm}
Let $\Gamma$ be a standard subgroup of $SU(n+1, 1)$.
The family of standard actions of $\Gamma$ on $S^{2n+1}$ is $C^3$-locally rigid.
\end{thm}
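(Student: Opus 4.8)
The plan is to carry out the three-step reduction sketched above, with the resonance in the linear part of $\rho^0(a)$ as the decisive feature.

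\textbf{Step 1: reduction to germs.} First I would reduce the theorem to local rigidity of the induced family in $\mathrm{Hom}(\Gamma, \mathcal{G}(\R^{2n+1}, \mathrm{O}))$. By Lemma \ref{stdalg}(i) the differential of $\rho^0(a)$ at $p^0$ is the linear contraction $\mathrm{diag}(k^{-1}, \dots, k^{-1}, k^{-2})$ (where $e^s = k$), so for every $\rho$ sufficiently $C^1$-close to a standard action the map $\rho(a)$ is a contraction near $p^0$ with a unique fixed point $p_\rho$ depending continuously on $\rho$. Using the relations $ab_ia^{-1} = b_i^k$ and $aca^{-1} = c^{k^2}$ together with the contraction of $\rho(a)$, one shows, as in \cite{Asaoka, BW}, that $\rho(\Lambda)$ fixes $p_\rho$ as well, so $\rho(\Gamma)$ has the common fixed point $p_\rho$; conjugating by a diffeomorphism sending $p_\rho$ to $p^0$ passes to germs at $\mathrm{O}$. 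Conversely, a germ conjugacy near $\mathrm{O}$ is propagated to a global conjugacy on $S^{2n+1}$ by pushing it outward with the iterates $\rho(a)^{-m}$, whose basins exhaust $S^{2n+1} \setminus \{q^0\}$, the point $q^0$ being handled symmetrically as the repelling fixed point.

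\textbf{Step 2: formal rigidity.} The core of the argument is local rigidity of the induced family in $\mathrm{Hom}(\Gamma, \mathcal{F}(\R^{2n+1}, \mathrm{O}))$. Here I would exploit the integer weight grading coming from $E$: by Lemma \ref{stdalg}(i) the linear field $\rho^0_*(E)$ has weights $-1$ (multiplicity $2n$) and $-2$, and $\mathrm{ad}(\rho^0_*(E))$ grades the Lie algebra of formal vector fields by $\mathbb{Z}$. After using the contraction to put $\rho(a)$ into a Poincar\'e--Dulac--Sternberg normal form, its normal form is governed by the resonances of $\mathrm{diag}(k^{-1}, \dots, k^{-1}, k^{-2})$; the only nontrivial relation $k^{-2} = (k^{-1})^2$ makes the degree-two monomials $x_ix_j\,\partial_{2n+1}$ in the central direction resonant, and these encode the Heisenberg cocycle $\Phi$ and cannot be removed. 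With $\rho(a)$ normalized, the relations $\rho(a)\rho(b_i)\rho(a)^{-1} = \rho(b_i)^k$, $\rho(a)\rho(c)\rho(a)^{-1} = \rho(c)^{k^2}$ and $[\rho(b_i),\rho(b_j)] = \rho(c)^{m_{ij}}$ become weight-graded equations for the Taylor coefficients of $\rho(b_i)$ and $\rho(c)$; since conjugation by $\rho(a)$ multiplies the weight-$w$ component by a definite power of $k$, I expect a cohomological vanishing argument to force all coefficients outside the finite-dimensional span of the standard generators to vanish, leaving exactly the finite-dimensional family of standard embeddings.

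\textbf{Step 3: from formal to germ, and the main obstacle.} Finally I would upgrade formal rigidity to germ rigidity. Because of the resonance, $\rho(a)$ is not linearizable even formally, so in place of linearization I would invoke Sternberg's normalization \cite{Sternberg}: a $C^r$ contraction with the given spectrum is $C^{r'}$-conjugate, for a suitable $r' < r$, to its polynomial normal form, with the conjugacy depending continuously on the map. Applying this to the standard action and to a nearby $\rho$ simultaneously realizes the formal conjugacy of Step 2 by an honest germ of diffeomorphism; the loss of derivatives in Sternberg's theorem, combined with the degree-two resonance, is what pins the regularity threshold at $C^3$. I expect this passage from the formal to the smooth category to be the main obstacle, precisely because $\rho(a)$ cannot be linearized and one must control the resonant quadratic terms throughout the normalization, both for the fixed action and uniformly over nearby homomorphisms.
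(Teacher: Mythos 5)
Your three-step outline coincides with the skeleton of the paper's argument, and Step 1 is essentially correct (the paper proves persistence of the global fixed point via the relations $a\gamma a^{-1}=\gamma^{m_\gamma}$ and a norm estimate, and extends a germ conjugacy globally by pushing outward — though with the iterates of $\sigma(c)$ rather than $\rho(a)$, checking well-definedness via words in the generators of $\Lambda$). But the two load-bearing steps are only gestured at, and in both cases the gesture hides the actual difficulty.

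First, the formal rigidity. A ``cohomological vanishing argument to force all coefficients outside the finite-dimensional span of the standard generators to vanish'' is not a proof, and it is not how the rigidity is actually obtained: the family of standard actions is not a single conjugacy class, so no vanishing statement can pin the perturbed homomorphism to the original one — you must produce a \emph{nearby standard embedding} $\iota:\Gamma\to AN$ to which the perturbation is conjugate. The paper does this by truncating to finite-dimensional jet groups $\P_r$, extending the homomorphism from the lattice $\Lambda$ to all of $N$ (Mal'cev--Raghunathan rigidity of lattices in nilpotent Lie groups), showing that the image of $\mathfrak{n}$ is a Heisenberg connection whose centralizer and dilations force $f(a)=\pi_rR^0(a)$, and then running a genuinely nontrivial graded-Lie-algebra computation inside $\mathfrak{su}(n+1,1)$ (the operator $\Theta$ built from $\mathrm{ad}(\iota^0F^-)$, and the multilinear maps $\Phi$, $\Psi$) to conjugate $\bar f(N)$ onto $\pi_rR^0(N)$. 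None of this is predicted by, or reducible to, the resonance bookkeeping you describe; your Step 2 as written would not close.

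Second, the passage from formal to germ. Sternberg's normalization applies to the \emph{single} contraction $\rho(a)$: it gives you a germ conjugating $\rho(a)$ to $\sigma(a)$ once their Taylor expansions agree. It does not follow that this conjugacy also intertwines $\rho(b_i)$ with $\sigma(b_i)$ and $\rho(c)$ with $\sigma(c)$ — after conjugation these pairs merely have equal Taylor expansions, and two germs with the same infinite jet need not coincide. The paper closes this gap with a separate uniqueness lemma (Lemma \ref{2_to_1}): a germ $g$ satisfying $fg=g^mf$ with $f$ the standard linear contraction is uniquely determined by its Taylor expansion, proved by a weighted contraction estimate on $\sup\|\tilde g_1(v)-\tilde g_2(v)\|/\|v\|^{r_0}$. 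Your proposal asserts that normalizing $\rho(a)$ ``simultaneously realizes the formal conjugacy'' for the whole group, which is exactly the step that needs an argument.
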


\setcounter{thm}{2}
\setcounter{section}{2}

From now on, we fix a standard subgroup $\Gamma$ of $SU(n+1, 1)$ with the presentation (\ref{nbc}).
The action of $\Gamma$ on $S^{2n+1}$ induced by the inclusion $\Gamma \subset SU(n+1, 1)$ will also be denoted by $\rho^0$.
To prove Theorem\ref{MainThm}, it is enough to show that for each embedding $\iota$ of $\Gamma$ into $SU(n+1, 1)$ as a standard subgroup, any actions sufficiently close to $\rho^0\circ\iota$ are conjugate to standard ones.
Replacing $\Gamma$ with its embedded image $\iota(\Gamma)$ if necessary, it suffices to show that actions close to $\rho^0$ are conjugate to standard ones.
Moreover, replacing $\rho^0$ with its conjugacy, we may assume that the Iwasawa decomposition appeared in Definition \ref{stdsubgrp} is the one given in \ref{stdsu}.

In the remainder of this subsection, we will give an explicit description of the action $\rho^0$ of $\Gamma$ in terms of the coordinate charts $\phi^0, \psi^0$ introduced in Subsection \ref{stdsu}.
Note that the map $\rho^0(a)$ has exactly one contracting fixed point $p^0 \in S^{2n+1}$ and exactly one expanding fixed point $q^0 \in S^{2n+1}$, while $\rho^0(\lambda)$ has exactly one fixed point $p^0$ for all $\lambda \neq 1 \in \Lambda$.
In particular, $p^0$ is the \textit{global fixed point} of $\rho^0$, i.e., $\rho^0(\gamma)p^0 = p^0$ for all $\gamma \in \Gamma$.
Thus the coordinate chart $\psi^0$ on $S^{2n+1}\setminus\{p^0\}$ induces an action of $\Gamma$ on $\R^{2n+1}$, which will be denoted by $\psi^0_*\rho^0$.
Then it is easy to see the following, the proof of which is left to the reader.
\begin{lem}\label{std1}
There are bases $\{u_1, \dots, u_{2n}\}$, $\{v_1, \dots, v_{2n}\}$ of $\R^{2n}$ and $t \in \R$ such that for any $(x_0, x) = \R \times \R^{2n} = \R^{2n+1}$,
\begin{align*}
\psi^0_*\rho^0(a)(x_0, x) &= (k^2x_0, kx), \\
\psi^0_*\rho^0(b_i)(x_0, x) &= (x_0 - \langle u_i, x \rangle, x +v_i), \\
\psi^0_*\rho^0(c)(x_0, x) &= (x_0 - t, x),
\end{align*}
where $\langle\,\cdot, \cdot\,\rangle$ denotes the Euclidean inner product on $\R^{2n}$.
\end{lem}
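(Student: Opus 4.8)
The plan is to compute the action of each generator $a$, $b_i$, $c$ directly on homogeneous coordinates and then transport it through the chart $\psi^0$. First I would pin down the matrix representatives. Since $\mathrm{Ad}(\exp sE)(\xi,\tau)=(e^s\xi,e^{2s}\tau)$, the relation $ab_ia^{-1}=b_i^k$ in (\ref{nbc}) forces $e^s=k$, so that $a=\exp(sE)=\mathrm{diag}(k,I_n,k^{-1})$. Writing $b_i=\exp X_i$ with $X_i=(\xi_i,0)\in\mathfrak{g}^{(+1)}$ and $c=\exp X_{2n+1}$ with $X_{2n+1}=(0,\tau)\in\mathfrak{g}^{(+2)}$, and using that $X_i^3=0$ and $X_{2n+1}^2=0$ so the exponential series terminates, one obtains
\[
b_i=\begin{pmatrix}1&-\overline{\xi_i}^T&-\|\xi_i\|^2/2\\ 0&I_n&\xi_i\\ 0&0&1\end{pmatrix},\qquad c=\begin{pmatrix}1&0&-i\tau\\ 0&I_n&0\\ 0&0&1\end{pmatrix},
\]
which recovers the parametrization of $N$ from \ref{stdsu}.

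Next I would read these off in the chart. A point of $S^{2n+1}\setminus\{p^0\}$ is represented by $[z_0,z,1]$ with $z=(z_1,\dots,z_n)\in\mathbb{C}^n$, and $\psi^0$ records exactly $\mathrm{Im}\,z_0$ together with the real and imaginary parts of $z$. The key simplification is that each generator sends $(z_0,z,1)^T$ to a vector whose last entry is a positive real number, so I may apply the matrix, rescale that entry to $1$, and read off the new coordinates; the real part $\mathrm{Re}\,z_0=-\|z\|^2/2$ cut out by the light-cone condition never needs to be tracked. For $a$ this gives $(z_0,z,1)^T\mapsto(kz_0,z,k^{-1})^T\sim(k^2z_0,kz,1)^T$, so $(x_0,x)\mapsto(k^2x_0,kx)$ since $k\in\R$. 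For $c$ it gives $(z_0,z,1)^T\mapsto(z_0-i\tau,z,1)^T$, so $(x_0,x)\mapsto(x_0-\tau,x)$, and I set $t=\tau$.

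For $b_i$ the image is $(z_0-\overline{\xi_i}^Tz-\|\xi_i\|^2/2,\;z+\xi_i,\;1)^T$. Since $\|\xi_i\|^2/2$ is real, taking imaginary parts gives the new zeroth coordinate $x_0-\mathrm{Im}(\overline{\xi_i}^Tz)$, while $z\mapsto z+\xi_i$. Letting $v_i\in\R^{2n}$ be the real coordinate vector of $\xi_i$, the second component is $x\mapsto x+v_i$; and because $z\mapsto\mathrm{Im}(\overline{\xi_i}^Tz)$ is $\R$-linear in $x$, there is a unique $u_i\in\R^{2n}$ with $\mathrm{Im}(\overline{\xi_i}^Tz)=\langle u_i,x\rangle$, which is exactly the asserted form. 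It then remains to verify the basis claims: $\{v_i\}$ is a basis because $\{\xi_i\}$ is an $\R$-basis of $\mathbb{C}^n$, and $\{u_i\}$ is a basis because the pairing $(\xi,z)\mapsto\mathrm{Im}(\overline{\xi}^Tz)$ is a nondegenerate real bilinear form on $\mathbb{C}^n\cong\R^{2n}$ (the standard symplectic form), so $\xi\mapsto u$ is a linear isomorphism carrying the basis $\{\xi_i\}$ to $\{u_i\}$.

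There is no genuine obstacle here; the argument is pure bookkeeping, which is presumably why it is left to the reader. The only points that demand any care are the projective renormalization of the last homogeneous coordinate and the clean separation into real and imaginary parts, together with the appeal to nondegeneracy of the pairing $\mathrm{Im}(\overline{\xi}^Tz)$ needed to conclude that $\{u_i\}$, like $\{v_i\}$, is a basis of $\R^{2n}$.
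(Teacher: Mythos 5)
Your computation is correct, and every step checks out: the normalization $e^{s}=k$ forced by $a b_i a^{-1}=b_i^k$, the terminating exponentials giving the matrix forms of $b_i$ and $c$, the projective rescaling of the last homogeneous coordinate, and the identification of $u_i$ via the nondegeneracy of the symplectic pairing $(\xi,z)\mapsto\mathrm{Im}(\overline{\xi}^Tz)$. Your route differs from the paper's intended one, though. The author's (unpublished) sketch does not touch the matrices of the group elements at all: it observes that the chart $\psi^0$ at $q^0$ plays the same role as $\phi^0$ at $p^0$ with the gradation reversed, so Lemma \ref{stdalg} already gives $\psi^0_*\circ\rho^0_*(E)$, $\psi^0_*\circ\rho^0_*(\mathfrak{g}^{(+1)})$ and $\psi^0_*\circ\rho^0_*(\mathfrak{g}^{(+2)})$ as explicit polynomial vector fields, and then integrates these flows (e.g.\ the flow of $\partial_1-x_2\partial_3$ is $(x_1,x_2,x_3)\mapsto(x_1+t,x_2,x_3-tx_2)$) to read off the affine formulas. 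That approach recycles an already-proved lemma and avoids the projective renormalization bookkeeping; yours is more self-contained, works directly at the group level rather than passing through the Lie algebra and its flows, and makes the explicit dependence of $u_i$, $v_i$, $t$ on the Mal'cev basis $\{\xi_i\}$ and $\tau$ visible, which is mildly useful later (for instance one sees immediately that $\langle u_i,v_i\rangle=\mathrm{Im}(\overline{\xi_i}^T\xi_i)=0$). Either argument is a complete proof of the lemma.
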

\personal{
\begin{proof}
Recall Lemma \ref{stdalg} and its proof.
Then we see that $\psi^0_*\circ\rho^0_*(E)$ is similar to $\phi^0_*\circ\rho^0_*(E)$ and $\psi^0_*\circ\rho^0_*(\mathfrak{g}^{(+1)})$ is similar to $\phi^0_*\circ\rho^0_*(\mathfrak{g}^{(-1)})$.
Then,  $\psi^0_*\circ\rho^0_*(\mathfrak{g}^{(+2)})$ is similar to $\phi^0_*\circ\rho^0_*(\mathfrak{g}^{(-2)})$. 
Note that the flow generated by $\partial_{2j-1} -x_{2j}\partial_{2n+1}$, or more simply, $\partial_1 -x_2\partial_3$ is $(x_1, x_2, x_3) \mapsto (x_1 +t, x_2, x_3 -tx_2)$.
\end{proof}
}
In general, for a diffeomorphism $f$ that is defined around $\mathrm{O} \in \R^m$, and fixes $\mathrm{O}$, the differential $(df)_\mathrm{O}$ at $\mathrm{O} \in \R^m$ can be identified with a matrix in $GL(m, \R)$.
For each $\rho^0(\gamma)\;(\gamma \in \Gamma)$, the coordinate chart $\phi^0$ induces a diffeomorphism around $\mathrm{O} \in \R^{2n+1}$ fixing $\mathrm{O}$, which will be denoted by $\phi^0_*(\rho^0(\gamma))$.
A straightforward computation shows the following, the proof of which is left to the reader.

\begin{lem}\label{std2}
\begin{enumerate}
\item The diffeomorphism $\phi^0_*(\rho^0(a))$ is linear:
\[
\phi^0_*(\rho^0(a))(x) = \left(
\begin{array}{cc}
\frac{1}{k}I_{2n}& 0\\
0&\frac{1}{k^2}
\end{array} \right)x\]
for $x$ around $\mathrm{O} \in \R^{2n+1}$.
\item There is a basis $\{u_1, \dots, u_{2n}\}$ of $\R^{2n}$ such that 
\[ (df_i)_\mathrm{O} = \left(
\begin{array}{cc}
I_{2n}& u_i\\
0&1
\end{array} \right), \;
(dg)_\mathrm{O} = I_{2n+1},
\]
where $f_i = \phi^0_*(\rho^0(b_i))$, $g = \phi^0_*(\rho^0(c))$.
\end{enumerate}
\end{lem}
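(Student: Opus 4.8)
The plan is to compute the projective action of each generator on a point of $\mathbb{P}(L)$ written in the chart $\phi^0$, and then read off the relevant first-order Taylor data at $\mathrm{O}$. First I would fix coordinates: a point near $p^0$ is $[1, z_1, \dots, z_n, z_{n+1}]$ with $z = (z_1, \dots, z_n) \in \mathbb{C}^n$, and the light-cone relation $\langle z, z\rangle = 0$ forces $2\mathrm{Re}\,z_{n+1} + \|z\|^2 = 0$. Hence $\mathrm{Re}\,z_{n+1} = -\|z\|^2/2$ is quadratic in the remaining coordinates, and only $\mathrm{Im}\,z_{n+1}$ survives as the last coordinate of $\phi^0$. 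Throughout, the $\phi^0$-image has $x_{2j-1} = \mathrm{Re}\,z_j$, $x_{2j} = \mathrm{Im}\,z_j$ for $1 \le j \le n$, and $x_{2n+1} = \mathrm{Im}\,z_{n+1}$.

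For (i) I would first use that the relations $aba^{-1} = b^k$ and $aca^{-1} = c^{k^2}$, combined with $\mathrm{Ad}(\exp sE)(\xi, \tau) = (e^s \xi, e^{2s}\tau)$, force $a = \exp sE$ with $e^s = k$, i.e. $a = \mathrm{diag}(k, I_n, 1/k)$. Applying this diagonal matrix to $[1, z, z_{n+1}]$ gives $[k, z, z_{n+1}/k] = [1, z/k, z_{n+1}/k^2]$. Since the renormalization is exact here (division by a constant), each of $x_1, \dots, x_{2n}$ is scaled by $1/k$ and $x_{2n+1}$ by $1/k^2$, so $\phi^0_*(\rho^0(a))$ is globally linear with the stated diagonal matrix; no differentiation is needed.

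For (ii) the generators are $b_i = \exp X_i$ with $X_i = (\xi_i, 0) \in \mathfrak{g}^{(+1)}$ and $c = \exp X_{2n+1}$ with $X_{2n+1} = (0,\tau) \in \mathfrak{g}^{(+2)}$, realized as the explicit unipotent matrices in $N$. Applying $b_i$ to $[1, z, z_{n+1}]$ produces a new first coordinate $w_0 = 1 - \overline{\xi_i}^T z - (\|\xi_i\|^2/2)\,z_{n+1}$, middle block $z + \xi_i z_{n+1}$, and last entry $z_{n+1}$; dividing by $w_0$ and using $w_0 = 1 + (\text{linear})$, the first-order part of the renormalized $z'$ is $z + \xi_i z_{n+1}$ and that of $z'_{n+1}$ is $z_{n+1}$, because every correction coming from $1/w_0$ multiplies an already first-order quantity and is therefore $O(\|x\|^2)$. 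The crucial point is that in $\phi^0$ the coordinate $z_{n+1}$ linearizes at the origin as $z_{n+1} = i\,x_{2n+1} + O(\|x\|^2)$, its real part being quadratic; thus $\delta z = \xi_i z_{n+1} = i\xi_i\,x_{2n+1} + O(\|x\|^2)$, and writing $u_i \in \R^{2n}$ for the real vector representing $i\xi_i \in \mathbb{C}^n$ gives $(df_i)_\mathrm{O}$ exactly the claimed form, with $x_{2n+1}$ fixed to first order. Applying $c$ gives first coordinate $1 - i\tau z_{n+1}$ with the middle and last entries unchanged, and after renormalization all corrections to the identity are $O(\|x\|^2)$, so $(dg)_\mathrm{O} = I_{2n+1}$.

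The computation itself is routine; the only care needed is the projective renormalization (checking that dividing by $w_0$ perturbs the differential only at second order, so that it may be ignored) and the correct identification of $u_i$. That $\{u_1, \dots, u_{2n}\}$ is a basis of $\R^{2n}$ is the one genuinely structural input: it follows because $\{\xi_1, \dots, \xi_{2n}\}$ is an $\R$-basis of $\mathbb{C}^n \cong \R^{2n}$ and multiplication by $i$ is an $\R$-linear automorphism, whence $\{i\xi_1, \dots, i\xi_{2n}\}$ is again a basis. I do not anticipate a serious obstacle, the whole statement being a linearization of the explicit formulas underlying Lemma \ref{stdalg}.
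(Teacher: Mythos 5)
Your proposal is correct and follows essentially the same route as the paper's computation: apply the explicit matrices to $[1,z,z_{n+1}]$, renormalize projectively, and use that $\mathrm{Re}\,z_{n+1} = -\|z\|^2/2$ is quadratic so that $z_{n+1} = ix_{2n+1}+O(\|x\|^2)$, which is exactly where the off-diagonal block $u_i$ (representing $i\xi_i$) comes from. The only cosmetic differences are that the paper deduces (i) from the vector-field computation of Lemma \ref{stdalg} rather than from the group element directly, and obtains $(dg)_\mathrm{O}=I_{2n+1}$ from the relation $[b_i,b_j]=c^{m_{ij}}$ together with the commutativity of the matrices $(df_i)_\mathrm{O}$ instead of computing the action of $c$ separately.
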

\personal{
\begin{proof}
(i) follows from Lemma \ref{stdalg}.
To show (ii), the equation for $b$ is sufficient; the commutator of such matrices are trivial, and $[b_i, b_j] = c^{m_{ij}}$.
\[
\left( \begin{array}{ccc}
 1 & -\overline{\xi}^T &-\frac{1}{2}\|\xi\|^2 \\
 0 &I_n &\xi \\
 0 &0 &1
\end{array} \right)
\left[ \begin{array}{c}
 1\\ z \\ z_{n+1}
\end{array} \right ] = 
\left[ \begin{array}{c}
 1 -t\overline{\xi}^Tz -\frac{1}{2}\|\xi\|^2z_{n+1} \\ z + z_{n+1}\xi\\ z_{n+1}
\end{array} \right ]
\]
By the formula
\[
\frac{1}{1-r} = 1 + r +r ^2 + \dots,
\]
we obtain
\[
\frac{1}{1 -\overline{\xi}^Tz +\frac{1}{2}\|\xi\|^2z_{n+1}} = 1 +\overline{\xi}^Tz -\frac{1}{2}\|\xi\|^2z_{n+1} + O(x^2).
\]
Thus
\[
= \left[ \begin{array}{c}
 1 \\ (1 +\overline{\xi}^Tz +\frac{1}{2}\|\xi\|^2z_{n+1})(z + z_{2n+1}\xi) + O(x^2)\\ (1 +\overline{\xi}^Tz +\frac{1}{2}\|\xi\|^2z_{n+1})z_{2n+1} +O(x^2)
\end{array} \right ] = 
\left[ \begin{array}{c}
 1 \\ z +ix_{2n+1}\xi+ O(x^2)\\ ix_{2n+1} +O(x^2)
\end{array} \right ],
\]
where we used $z_{n+1} = ix_{2n+1} + O(x^2)$.
Applying $\phi^0$, we proved the claim.
\end{proof}
}

\section{Sternberg's normalization}\label{Stb}
We first recall Sternberg's original proof of linearization of a diffeomorphism around a contracting fixed point.
Let $\mathcal{G} = \mathcal{G}(\R^m, \mathrm{O})$ be the group of germs of diffeomorphisms defined around $\mathrm{O} \in \R^m = \{(x_1, \dots, x_m)\mid x_i \in \R\}$ and fixing $\mathrm{O} \in \R^m$.
Let $\R[[x_1, \dots, x_m]]$ be the ring of formal power series in the variables $x_1, \dots, x_m$ over $\R$.
The set $\mathcal{F} = \mathcal{F}(\R^m, \mathrm{O}) \subset (\R[[x_1, \dots, x_m]])^m$ of the Taylor expansions of the elements in $\mathcal{G}$ has a natural group structure.
The group $\mathcal{F}$ will be called the group of \textit{formal transformations} of $\R^m$.
Let  $D_\mathrm{O}f \in \mathcal{F}$ be the formal transformation defined by $f \in \mathcal{G}$.
Note that the group $GL(m, \R)$, which is a subgroup of the group of diffeomorphisms around $\mathrm{O}\in\R^{2n+1}$ that fix $\mathrm{O}$, is naturally a subgroup of $\mathcal{G}$ and $\mathcal{F}$.
When $F \in \F$ is the Taylor expansions $F = D_\mathrm{O}f$ of $f \in \mathcal{G}$, the differential $(df)_\mathrm{O}$ at $\mathrm{O} \in \R^m$, which can be identified with a matrix in $GL(m, \R)$, will be called the \textit{linear part} of $F$.

\begin{thm}[Sternberg's linearization \cite{Sternberg}]\label{Stlin}
Let $f \in \mathcal{G}$ be the germ of a contraction at $\mathrm{O} \in \R^m$, $\lambda_1, \dots, \lambda_m \in \mathbb{C}, \;|\lambda_i| < 1$ the eigenvalues of the differential $(df)_\mathrm{O}$ of $f$ at $\mathrm{O} \in \R^m$, and $L \in \mathcal{G}$ the germ of the linear transformation defined by $(df)_\mathrm{O} \in GL(m, \R)$.
Assume the following condition, called the \textup{resonant condition}:
\[
\lambda_i \neq \lambda_1^{l_1} \dots \lambda_m^{l_n} \text 
{ for all } i=1, \dots, m, \;l_j \in \mathbb{Z}_{\geq 0},\; \sum_j l_j \geq 2.
\]
Then $f$ and $L$ are conjugate in $\mathcal{G}$.
\end{thm}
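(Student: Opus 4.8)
The plan is to split the argument into a purely formal (algebraic) stage and an analytic (smoothing) stage, following the classical two-stage scheme for normal-form problems: first linearize the Taylor expansion, then realize the formal conjugacy by a genuine germ and remove the resulting flat error using the contraction.

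First I would construct a formal conjugacy in $\F$. Writing $D_\mathrm{O}f = L + \sum_{k\ge 2} F_k$ with $F_k$ the homogeneous part of degree $k$, I would build $H \in \F$ of the form $H = \mathrm{id} + \sum_{k\ge 2} P_k$, with $P_k$ homogeneous of degree $k$, so that $H \circ (D_\mathrm{O}f) \circ H^{-1} = L$ in $\F$, proceeding degree by degree. If the transformation has already been normalized to agree with $L$ through degree $k-1$, then conjugating by $\mathrm{id} + P_k$ changes the degree-$k$ term according to the homological equation
\[
L\,P_k(x) - P_k(Lx) = F_k(x),
\]
where $F_k$ is the remaining obstruction at degree $k$. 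The linear operator $P \mapsto LP - P\circ L$ acts on the finite-dimensional space of $\R^m$-valued homogeneous polynomials of degree $k$; in a basis adapted to the (complex) Jordan form of $L$ it is triangular with diagonal entries $\lambda_i - \lambda_1^{\alpha_1}\cdots\lambda_m^{\alpha_m}$, where $|\alpha| = k$ and $1 \le i \le m$. The resonant condition is precisely the statement that all of these numbers are nonzero for $k \ge 2$, so the operator is invertible and $P_k$ is uniquely determined. Induction on $k$ then yields the formal conjugacy $H$.

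Second, I would pass from the formal to the smooth statement. By Borel's theorem there is a genuine germ $h \in \mathcal{G}$ with $D_\mathrm{O}h = H$; replacing $f$ by $g := h \circ f \circ h^{-1}$ reduces us to the case $D_\mathrm{O}g = L$, that is, $g - L$ is flat at $\mathrm{O}$ (all partial derivatives vanish there). It then remains to prove that a contraction $g$ with $g - L$ flat is conjugate to $L$, after which $\phi \circ h$ conjugates the original $f$ to $L$. For this I would form the sequence $\phi_n := L^{-n} \circ g^n$ and show that it converges in the $C^\infty$ topology on a small ball to a germ $\phi \in \mathcal{G}$ whose linear part is the identity. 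The telescoping identity $\phi_n \circ g = L \circ \phi_{n+1}$ then passes to the limit and gives $\phi \circ g \circ \phi^{-1} = L$, completing the argument.

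The main obstacle is the convergence of $\phi_n$, and this is exactly where both the contraction hypothesis and the flatness of $g - L$ are used essentially. The key estimate rests on the identity
\[
\phi_{n+1} - \phi_n = L^{-(n+1)}\bigl((g - L)\circ g^n\bigr).
\]
The orbit $g^n(x)$ tends to $\mathrm{O}$ at a geometric rate governed by the largest eigenvalue modulus $\mu < 1$, so flatness of $g - L$ to order $N$ forces $(g-L)\circ g^n$ to be of size $O(\mu^{nN})$, while $L^{-(n+1)}$ expands by at most $\nu^{-(n+1)}$, with $\nu$ the smallest eigenvalue modulus. Choosing $N$ so large that $\mu^N < \nu$ makes the terms geometrically summable. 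The delicate point, which I expect to be the technical heart of the proof, is to propagate such bounds to all derivatives simultaneously and uniformly on a fixed neighborhood: flatness must be exploited at every order to dominate the higher chain-rule terms produced by the expansion of $L^{-n}$, so the estimate is really a family of estimates indexed by the derivative order, each balancing the geometric contraction of $g$ against the expansion of $L^{-1}$.
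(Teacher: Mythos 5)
Your proposal is correct and follows essentially the same route as the paper, which reduces the theorem to Sternberg's two propositions: the formal linearization (your degree-by-degree solution of the homological equation, using the resonant condition to invert $P \mapsto LP - P\circ L$) and the statement that two contraction germs with the same Taylor expansion are conjugate (your Borel-realization step followed by the convergence of $L^{-n}\circ g^n$, which is the standard proof of that proposition in the special case needed here). The technical point you flag --- propagating the estimates to all derivative orders by exploiting flatness at every order --- is indeed where the work lies, and your scheme for it is the classical one.
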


Sternberg's proof of the above theorem consists of the following two propositions.
\begin{prop}[\cite{Sternberg}]\label{St1}
If $f, f' \in \mathcal{G}$ are the germs of contractions satisfying $D_\mathrm{O}f = D_\mathrm{O}f' \in \mathcal{F}$, then $f$and $f'$ are conjugate by an element $h \in \mathcal{G}$ with $D_\mathrm{O}h = 1 = D_\mathrm{O} \mathrm{id}\in \mathcal{F}$.
\end{prop}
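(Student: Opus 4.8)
The goal is to produce $h \in \mathcal{G}$ with $h\circ f = f'\circ h$ and $D_{\mathrm O}h = 1$; that $h$ conjugates $f$ to $f'$ is then immediate. The plan is to realize $h$ as a limit of Picard-type iterates of the operator $T(\eta) = (f')^{-1}\circ \eta\circ f$ on germs, whose fixed points are exactly the germs conjugating $f$ to $f'$. Starting from $\eta_0 = \mathrm{id}$ one gets $\eta_N = T^N(\mathrm{id}) = (f')^{-N}\circ f^N$. Since $f$ and $f'$ share the same Taylor expansion, so do $f^N$ and $(f')^N$, whence $D_{\mathrm O}\eta_N = D_{\mathrm O}\bigl((f')^{-N}\circ (f')^N\bigr) = 1$ for every $N$; this is what will force $D_{\mathrm O}h = 1$ in the limit. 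The crucial structural input is that $\varphi := (f')^{-1}\circ f$ is infinitely tangent to the identity, i.e. $\varphi(y)-y$ is flat at $\mathrm O$: for every $k$ there is $C_k$ with $|\varphi(y)-y|\le C_k|y|^k$, and similarly for all derivatives.

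First I would fix a small ball $U$ around $\mathrm O$ and constants $0 < \mu < 1 < \Lambda$, with $\mu$ slightly above and $\Lambda^{-1}$ slightly below the spectral bounds of $(df)_{\mathrm O}$, so that on $U$ one has the contraction estimate $|f^N x|\lesssim \mu^N|x|$ together with a crude expansion estimate $\mathrm{Lip}\bigl((f')^{-N}|_{B}\bigr)\lesssim \Lambda^N$ on balls $B$ near $\mathrm O$, and the analogous bounds for higher derivatives. Writing $\eta_{N+1} - \eta_N = (f')^{-N}\circ\varphi\circ f^N - (f')^{-N}\circ f^N$ and inserting the three estimates (flatness of $\varphi-\mathrm{id}$ against the smallness of $f^N\,\cdot$, then the expansion of $(f')^{-N}$), one obtains, for each fixed derivative order $r$, a geometric bound $\|\eta_{N+1}-\eta_N\|_{C^r(U)}\lesssim \theta^N$ with $\theta = \theta(r) < 1$, provided the flatness order $k$ is taken large enough depending on $r$. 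Hence the telescoping series $\sum_N(\eta_{N+1}-\eta_N)$ converges in $C^r(U)$ for every $r$, so $\eta_N \to h$ in $C^\infty$ on a neighborhood of $\mathrm O$.

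It remains to check that the limit has the desired properties. Because each $\eta_N$ has linear part $I_m$ (as $D_{\mathrm O}\eta_N = 1$), the same holds for $h$, so by the inverse function theorem $h$ is a germ of a diffeomorphism, and passing to Taylor expansions gives $D_{\mathrm O}h = 1$. Finally $T$ is continuous for the $C^\infty$ topology, being pre- and post-composition by the fixed diffeomorphisms $f$ and $(f')^{-1}$, so $h = \lim \eta_{N+1} = \lim T(\eta_N) = T(h)$, i.e. $f'\circ h = h\circ f$, which is exactly the required conjugacy.

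The main obstacle is the convergence step, and within it the bookkeeping of domains: $(f')^{-N}$ is an expansion whose natural domain shrinks with $N$, so one must verify simultaneously that the iterates $\eta_N(x)$ stay inside a fixed neighborhood on which all the compositions are defined. This is controlled by the same flatness estimate — the bound on $\sum_N(\eta_{N+1}-\eta_N)$ shows that $\eta_N(x)$ remains within $O(|x|)$ of $x$ — but it has to be run as a bootstrap together with the $C^r$ estimates rather than after them. Likewise the dependence $k = k(r)$ of the flatness order on the derivative order must be tracked carefully to produce genuine $C^\infty$ convergence on one common neighborhood, rather than merely $C^r$ convergence on a sequence of shrinking domains.
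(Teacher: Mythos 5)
The paper does not prove Proposition~\ref{St1} at all; it is quoted directly from Sternberg \cite{Sternberg}, and the classical proof there is exactly the scheme you describe: set $\eta_N=(f')^{-N}\circ f^{N}$, use that $\varphi=(f')^{-1}\circ f$ is infinitely tangent to the identity so that the flatness order $k$ can be chosen with $\Lambda\mu^{k}<1$, beating the expansion rate $\Lambda^{N}$ of $(f')^{-N}$ by the contraction rate $\mu^{kN}$ of $f^{N}$, and pass to the $C^{\infty}$ limit. Your write-up is correct and essentially identical to that argument, and you have correctly isolated the two genuinely delicate points (the domain bootstrap ensuring $\eta_N(x)$ stays in a fixed neighborhood, and the dependence $k=k(r)$ of the flatness order on the derivative order), so nothing further is needed.
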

\begin{prop}[\cite{Sternberg}]\label{Stlinf}
Let $F \in \mathcal{F}$ be a formal transformation, and $L \in GL(m, \R)$ the linear part of $F$.
If $L$ is a contraction satisfying the resonant condition, then $F$ and $L$ are conjugate in $\mathcal{F}$.
\end{prop}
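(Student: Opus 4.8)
The plan is to conjugate $F$ to its linear part $L$ one homogeneous degree at a time. Grade the formal power series defining $\F$ by total degree, so that $F$ expands as $F = L + F_2 + F_3 + \cdots$, with $F_k$ the ($\R^m$-valued) homogeneous component of degree $k$. I would look for a formal transformation $H = \mathrm{id} + H_2 + H_3 + \cdots$ tangent to the identity with $H^{-1}\circ F \circ H = L$, and build $H$ as an infinite composition $H = \cdots \circ H_k \circ \cdots \circ H_3 \circ H_2$ of elementary transformations $H_k = \mathrm{id} + P_k$, where $P_k$ is homogeneous of degree $k$ and is chosen to kill the lowest-order discrepancy between the partially conjugated map and $L$.

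For the inductive step, suppose that after finitely many conjugations we have arranged $F \equiv L \pmod{\deg \geq k}$ for some $k \geq 2$, say $F = L + R_k + (\deg \geq k+1)$ with $R_k$ homogeneous of degree $k$. Conjugating by $H_k = \mathrm{id} + P_k$ and expanding modulo degree $k+1$ (using that $P_k$ has degree $k \geq 2$, so the cross terms it produces have degree $\geq 2k-1 \geq k+1$), a short computation gives
\[
H_k^{-1}\circ F \circ H_k = L + \bigl(L\,P_k - P_k\circ L + R_k\bigr) + (\deg \geq k+1).
\]
Thus the degree-$k$ obstruction is removed precisely when $P_k$ solves the homological equation
\[
\mathcal{L}_L(P_k) = -R_k, \qquad \mathcal{L}_L(P) := L\,P - P\circ L,
\]
where $\mathcal{L}_L$ is a linear operator on the finite-dimensional space of $\R^m$-valued homogeneous polynomials of degree $k$.

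The crux is the invertibility of $\mathcal{L}_L$, and this is exactly where the resonant (non-resonance) condition enters. Complexifying and passing to a basis in which $L$ is upper triangular with diagonal entries $\lambda_1, \dots, \lambda_m$, I would order the monomial maps $x^\alpha e_i$ (with $|\alpha| = k$) so that $\mathcal{L}_L$ becomes upper triangular, its diagonal entries being $\lambda_i - \lambda^\alpha$, where $\lambda^\alpha = \lambda_1^{l_1}\cdots \lambda_m^{l_m}$; in the diagonalizable case this is immediate, since $\mathcal{L}_L(x^\alpha e_i) = (\lambda_i - \lambda^\alpha)\,x^\alpha e_i$. The hypothesis says precisely that $\lambda_i \neq \lambda^\alpha$ for every $i$ and every multi-index with $\sum_j l_j \geq 2$, so none of these eigenvalues vanishes and $\mathcal{L}_L$ is invertible on each homogeneous degree-$k$ space for $k \geq 2$. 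Hence the homological equation can be solved at every stage and the inductive step goes through.

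Finally I would assemble the solution. Since each $H_k$ is tangent to the identity and alters only the coefficients of degree $\geq k$, the coefficients of the composite up to any fixed degree $N$ stabilize after the steps $k \leq N$; thus the infinite composition defines a genuine element $H \in \F$ with $H^{-1}\circ F \circ H = L$. The contraction hypothesis $|\lambda_i| < 1$ is convenient for consistency with the subsequent germ-level step and guarantees $|\lambda^\alpha| \to 0$ as $|\alpha| \to \infty$, so that only finitely many degrees could ever exhibit a resonance; but for the purely formal statement it is the non-resonance condition alone that does the work. I expect the main obstacle to be the invertibility of $\mathcal{L}_L$ and, inside it, the bookkeeping that reduces the general non-diagonalizable case to the triangular eigenvalue computation above.
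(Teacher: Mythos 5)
Your proposal is correct and follows essentially the same route the paper indicates for this result: decompose by homogeneous degree, reduce each step to the homological equation $L\,P - P\circ L = -R$, and invoke non-resonance to invert that operator (the paper solves $FH = HL$ for a single $H$ degree by degree rather than composing shears $\mathrm{id}+P_k$, but this is the same induction). Your observations that the contraction hypothesis is not needed for the purely formal statement and that the non-diagonalizable case reduces to a triangular eigenvalue computation are both accurate.
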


As mentioned in \cite{Sternberg}, the latter proposition has a natural modification in the case where the resonant condition does not hold.
In fact, under some weaker conditions, one can find a (non-linear) polynomial transformation conjugate to the given transformation.
Such a technique is called normalization of a transformation.
While Proposition \ref{St2} below, which will be used later, is an immediate consequence of \cite{Sternberg}, we will give here a proof of Proposition \ref{St2}.
In fact, the notation and calculation in the proof will also be used later.

Recall the notation introduced in Section \ref{std}.
We want to normalize formal transformations close to the linear one $\phi^0_*(\rho^0(a)) \in GL(2n+1, \R)$ induced by the standard action of $a \in \Gamma$ around $p^0$.
By Lemma \ref{std2},
\[
\phi^0_*(\rho^0(a)) = \left(
\begin{array}{cc}
\frac{1}{k}I_{2n}& 0\\
0&\frac{1}{k^2}
\end{array} \right), \; k \geq 2.
\]
Let us denote the above matrix by $I(k)$.
Observe that $I(k)$ does not satisfy the resonant condition in Theorem \ref{Stlin}.
Thus formal transformations close to $I(k)$, in general, do not satisfy the resonant condition.

Proposition \ref{Stlinf} was proved by solving an equation in $\F = \mathcal{F}(\R^m, \mathrm{O})$ by induction on the degree.
More explicitly, to prove Proposition \ref{Stlinf}, it suffices to find a solution of the equation
\[
FH = HL
\]
in $H \in \F$.
Note that each $F \in \F$ can be uniquely written as an infinite sum of maps in $ \mathcal{S}^{(r)}(\R^m)$, where $\mathcal{S}^{(r)}(\R^m)$, $r \geq 1$ denotes the space of polynomial functions $F:\R^m \to \R^m$ whose components are homogeneous polynomials of degree $r$.
Thus we can construct a solution by induction on $r$.

In our case, instead of the decomposition of $\F = \F(\R^{2n+1}, \mathrm{O})$ into the spaces $\mathcal{S}^{(r)}(\R^{2n+1})$, it is convenient to use another decomposition of $\F$.
Let $\mathcal{F}^{(r)}$, $r \geq -1$ be the collection of polynomial functions $F:\R^{2n+1} \to \R^{2n+1}$ such that $I(k)\circ F \circ I(k)^{-1} = k^r F$ and that $F(\mathrm{O}) = \mathrm{O}$.
More explicitly, $F \in \F^{(r)}$ if and only if $F$ is a polynomial function such that
\begin{itemize}
\item $F_i(x) \in \mathcal{H}^{(r+1)}$, $(1 \leq i \leq 2n)$
\item $F_{2n+1}(x) \in \mathcal{H}^{(r+2)}$,
\end{itemize}
where $F_i(x)$ denotes $i$-th component of $F(x)$, and $\mathcal{H}^{(r)}$ denotes the $\R$-linear span of $x_1^{r_1}\dots x_{2n+1}^{r_{2n+1}}$'s $(r_1 + \dots + r_{2n} + 2r_{2n+1} = r)$, which is, so to speak, the space of homogeneous polynomials of ``weighted'' degree $r$.
Let $\P_0 \subset \F$ be the subgroup defined by
\[
\P_0 = \F \cap \F^{(0)}.
\]
Thus, for $F \in \F^{(0)}$, $F \in \P_0$ if and only if the linear part of $F$ is an invertible matrix.
Then each $F \in \F$ can be written as an infinite sum
\[
F = \sum_{r \geq -1} F^{(r)}
\]
of maps $F^{(r)} \in \F^{(r)}$.
In particular, there is a natural bijection between the set $\F$ and the set $\P_0 \times \prod_{r\geq -1, r \neq 0} \F^{(r)}$.
Note that the map $\mathcal{F} \to \P_0$, $F \mapsto F^{(0)}$ is a group homomorphism.

\begin{prop}\label{St2}
If $F \in \mathcal{F}$ is $C^1$-close to $I(k) \in GL(2n + 1, \R)$, then $F$ is conjugate to $G = F^{(0)}\in \mathcal{F}$ in $\F$.
\end{prop}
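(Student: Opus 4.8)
The plan is to imitate Sternberg's proof of Proposition \ref{Stlinf}, but to replace the ordinary grading by the weighted grading $\F = \P_0 \times \prod_{r \neq 0} \F^{(r)}$ and to solve the conjugacy equation one weighted degree at a time. Set $G = F^{(0)}$ and recall that $\pi \colon \F \to \P_0$, $\pi(F) = F^{(0)}$, is a group homomorphism; let $\mathcal{N} = \ker \pi$ be the subgroup of formal transformations whose weighted-degree-$0$ part is the identity. Because $\pi$ is a homomorphism, conjugating $F$ by any element of $\mathcal{N}$ leaves $\pi(F) = G$ unchanged. Hence I will only ever conjugate by elements of $\mathcal{N}$: this automatically pins the target to $G$, and the task reduces to killing the components $F^{(s)}$ with $s \neq 0$. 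I will remove them in order of increasing weighted degree, beginning with the lowest one that occurs, namely $s = -1$.

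For the inductive step, suppose that after finitely many such conjugations the transformation, still denoted $F$, agrees with $G$ in every weighted degree $< s$ (and, automatically, in degree $0$). Let the leading discrepancy be $D_s := F^{(s)} \in \F^{(s)}$, and conjugate by $H = \mathrm{id} + P$ with $P \in \F^{(s)}$. A direct computation gives $H^{-1} \circ G \circ H - G = \mathcal{L}_G(P) + O(P^2)$, where $\mathcal{L}_G(P) := DG \cdot P - P \circ G$, so that the weighted-degree-$s$ part of $H^{-1} \circ F \circ H$ is $D_s + \mathcal{L}_G(P)$. Using that $G$ commutes with $I(k)$ (as $G \in \F^{(0)}$), one checks that $\mathcal{L}_G$ maps $\F^{(s)}$ into itself; it therefore suffices to solve $\mathcal{L}_G(P) = -D_s$ in $\F^{(s)}$. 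Since $\mathrm{id} + P \in \mathcal{N}$ and since the terms $O(P^2)$ together with the cross terms $P \circ (\text{higher-degree parts of } F)$ only raise the weighted degree, this conjugation cancels the degree-$s$ term while leaving $G$ and the already-normalized degrees $< s$ intact, introducing corrections only in weighted degrees $> s$. Iterating over $s = -1, 1, 2, \dots$ and composing, the infinite product of the maps $\mathrm{id} + P$ converges in $\F$, because each $P \in \F^{(s)}$ contributes to a given finite jet for only finitely many $s$; the limit $H$ conjugates $F$ to $G = F^{(0)}$.

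The heart of the matter, and the step I expect to be the main obstacle, is the invertibility of $\mathcal{L}_G$ on each $\F^{(s)}$ with $s \neq 0$, because the higher-order part of $G$ is genuinely not small (only the linear part of $F$ is controlled by $C^1$-closeness). I would handle this by writing $G = A + Q$, with $A \in GL(2n+1, \R)$ the linear part of $G$ and $Q$ the remaining terms, which have weighted degree $0$ but total polynomial degree $\geq 2$. Then $\mathcal{L}_A$ preserves total degree, whereas $\mathcal{L}_G - \mathcal{L}_A$ strictly raises it and is therefore nilpotent on the finite-dimensional space $\F^{(s)}$; consequently $\mathcal{L}_G$ and $\mathcal{L}_A$ have the same eigenvalues there, and the size of $Q$ becomes irrelevant. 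For $A = I(k)$ the operator $\mathcal{L}_A$ is diagonal in the monomial basis, with eigenvalue $k^{-1}(1 - k^{-s})$ on a monomial landing in the first $2n$ coordinates and $k^{-2}(1 - k^{-s})$ on one landing in the last coordinate; these vanish precisely when $s = 0$ and are bounded away from $0$ uniformly in $s \geq -1$. Since $F$, hence $A$, is $C^1$-close to the contraction $I(k)$, the eigenvalues of $\mathcal{L}_A$ stay nonzero and uniformly bounded away from $0$, so $\mathcal{L}_G$ is invertible and the required $P$ exists at every stage. The delicate point to get right is exactly this eigenvalue analysis: one must exploit the $I(k)$-covariance of the weighted grading to confine the non-small part $Q$ of $G$ to a nilpotent perturbation, so that solvability is decided by the linear part alone, where the only resonance sits in weighted degree $0$.
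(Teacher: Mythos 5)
Your proposal is correct and follows essentially the same route as the paper's proof: induct on the weighted degree, use that $F \mapsto F^{(0)}$ is a group homomorphism onto $\P_0$ to pin down the degree-$0$ part, and invert a linear operator on each $\F^{(s)}$, $s \neq 0$, the point being that the only resonance of $I(k)$ in this grading sits in weighted degree $0$. Two remarks on the execution. First, you are more careful than the paper on one point: the paper writes the degree-$r$ equation as $L \circ H^{(r)} = H^{(r)} \circ L + \Phi$ with $L$ only the \emph{linear} part of $G$, even though the quadratic (weighted-degree-$0$) part of $G$ also multiplies the unknown $H^{(r)}$ at the same weighted degree; your observation that $\mathcal{L}_G - \mathcal{L}_A$ strictly raises total polynomial degree and is therefore nilpotent on the finite-dimensional space $\F^{(s)}$ is exactly the right way to reduce solvability to the linear part. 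Second, the one assertion you leave unjustified is precisely the content of the paper's Lemma \ref{St2_1}: that a \emph{single} neighborhood of $I(k)$ makes the operator invertible on $\F^{(s)}$ for \emph{all} $s$ simultaneously. Your exact eigenvalue computation applies only to $A = I(k)$; for a nearby $A$, continuity of eigenvalues on each fixed $\F^{(s)}$ only gives neighborhoods that could a priori shrink as $s \to \infty$. One needs a quantitative input uniform in $s$ — either the paper's norm estimate $k^r\|F\| \leq c^{r+3}\|F\|$ with $c^4 < k$, or the equivalent multiplicative eigenvalue bound $\bigl|\mu_i - \prod_j \mu_j^{\alpha_j}\bigr| > 0$ over all admissible multi-indices, valid once the eigenvalues $\mu_j$ of $A$ lie within a fixed multiplicative factor $c$ of those of $I(k)$ with $c^4 < k$. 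With that supplied, your argument is complete.
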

\begin{proof}
Suppose that $F$ is $C^1$-close to $I(k)$.
Replacing $F$ with its conjugate by a linear transformation, we may assume that $F^{(-1)} = 0$.
We claim that there is a unique transformation $H \in \mathcal{F}$ such that $H^{(-1)} = 0$, $H^{(0)} = \mathrm{id} \in \P_0$, and $FH = HG$.
We will solve the equation
\[
(FH)^{(r)} = (HG)^{(r)}\;(r \geq -1),\;H^{(-1)} = 0,\;H^{(0)} = \mathrm{id}
\]
by induction on $r$.
When $r = -1$, the both sides of the equation vanish.
The case $r = 0$ follows from the fact that the map $F \mapsto F^{(0)}$ is a group homomorphism from $\mathcal{F}$ onto $\P_0$.
If $r \geq 1$, the equation is equivalent to
\[
L \circ H^{(r)} = H^{(r)}\circ L + \Phi,
\]
where $L$ is the linear part of $G$ and $\Phi$ is a sum of terms determined by $F^{(p)}$ $(p \leq r)$ and $H^{(q)}$ $(q < r)$.
Now the claim follows from Lemma \ref{St2_1} below.
\end{proof}
\begin{lem}\label{St2_1}
Let $\P_0\cap GL(2n+1, \R)$ $(= GL(2n, \R) \times GL(1, \R))$ be the group of linear parts of the transformations of $\P_0$.
There is a neighborhood $\mathcal{U}$ of $I(k) \in \P_0\cap GL(2n+1, \R)$ such that for $L \in \mathcal{U}$ and $r \geq 1$, the linear map
\[
\mathcal{F}^{(r)} \to \mathcal{F}^{(r)},\; F \mapsto L\circ F \circ L^{-1} -F
\]
is a linear automorphism of $\mathcal{F}^{(r)}$.
\personalnote{As $I(k)F I(k)^{-1} = k^r F$, what is nontrivial is uniformness on $r$.}
\end{lem}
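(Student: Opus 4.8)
The plan is to reduce the statement to a spectral computation and then extract the uniformity in $r$ from the exponential growth of the unperturbed eigenvalue. For $L \in \P_0 \cap GL(2n+1,\R) = GL(2n,\R)\times GL(1,\R)$, write $C_L(F) = L\circ F \circ L^{-1}$. Since such an $L$ is block diagonal, mixing only the variables $x_1,\dots,x_{2n}$ among themselves and scaling $x_{2n+1}$, it preserves weighted degree; hence $C_L$ maps $\F^{(r)}$ into itself. As each $\F^{(r)}$ is finite-dimensional and $C_L$ is real-linear, the map $F \mapsto C_L(F) - F = (C_L - \mathrm{id})(F)$ is an automorphism of $\F^{(r)}$ if and only if $1$ is not an eigenvalue of $C_L$ on the complexification of $\F^{(r)}$. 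So it suffices to bound the eigenvalues of $C_L|_{\F^{(r)}}$ away from $1$, with the bound uniform in $r$.

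To compute these eigenvalues I would factor $L = I(k)M$, where $M = I(k)^{-1}L \in GL(2n,\R)\times GL(1,\R)$ is close to the identity when $L$ is close to $I(k)$. Since $C$ is a homomorphism and $C_{I(k)}$ acts on $\F^{(r)}$ as the scalar $k^r$ — this is exactly the defining property $I(k)\circ F \circ I(k)^{-1} = k^r F$ — we obtain $C_L|_{\F^{(r)}} = k^r\, C_M|_{\F^{(r)}}$. Upper-triangularizing the $GL(2n,\R)$-block of $M$ by a linear change of the variables $x_1,\dots,x_{2n}$, which respects the weighted grading, makes $C_M$ upper-triangular in the induced monomial basis of $\F^{(r)}$, with diagonal entries $\mu_j \prod_i \mu_i^{-\alpha_i}$, where $\mu_1,\dots,\mu_{2n+1}$ are the complex eigenvalues of $M$ and $\alpha$ ranges over the multi-indices allowed by the degree constraints defining $\F^{(r)}$. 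Each $\mu_i$ lies within $\delta$ of $1$, and the number of factors is at most the weighted degree plus one, hence $\le r+3$; in particular $\sum_i \alpha_i \le r+2$.

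This yields the key uniform estimate. Choosing a neighborhood $\mathcal{U}$ of $I(k)$ small enough that every eigenvalue of $M$ lies within $\delta$ of $1$, every eigenvalue of $C_M|_{\F^{(r)}}$ has modulus at least $(1-\delta)(1+\delta)^{-(r+2)}$, so every eigenvalue of $C_L|_{\F^{(r)}}$ has modulus at least
\[
k^r(1-\delta)(1+\delta)^{-(r+2)} \;=\; (1-\delta)(1+\delta)^{-2}\left(\frac{k}{1+\delta}\right)^{r}.
\]
Because $k \ge 2$, we may fix $\delta$ with $k(1-\delta) > (1+\delta)^3$; then, since $k/(1+\delta) > 1$, the right-hand side is increasing in $r$ and is at least $k(1-\delta)(1+\delta)^{-3} > 1$ for every $r \ge 1$. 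Consequently no eigenvalue of $C_L|_{\F^{(r)}}$ equals $1$, which is precisely the assertion of the lemma.

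I expect the delicate point to be exactly this uniformity in $r$, as the footnote already signals. A naive perturbation argument fails: $C_M$ is \emph{not} close to the identity on the high-degree pieces $\F^{(r)}$, its eigenvalues drifting multiplicatively like $(1\pm\delta)^{\pm r}$, so the separation of its eigenvalues from a fixed value would deteriorate as $r\to\infty$. The resolution is that the unperturbed eigenvalue $k^r$ grows geometrically with ratio $k \ge 2$, which strictly dominates this drift once $\delta$ is small relative to $k$; this is what keeps all eigenvalues of $C_L|_{\F^{(r)}}$ bounded above $1$ simultaneously for all $r$, and hence what makes a single neighborhood $\mathcal{U}$ work uniformly.
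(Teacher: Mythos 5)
Your argument is correct, and it reaches the conclusion by a genuinely different route than the paper. The paper does not compute any spectrum: it fixes a constant $c>1$ with $c^4<k$ and a neighborhood $\mathcal{U}$ on which $\|LI(k)^{-1}\|,\|I(k)L^{-1}\|<c$, endows $\F^{(r)}$ with the sup norm $\|F\|=\max_{\|x\|=1}\|F(x)\|$, and shows directly that any $F$ in the kernel of $F\mapsto L\circ F\circ L^{-1}-F$ satisfies $k^r\|F\|=\|I(k)\circ L^{-1}\circ F\circ L\circ I(k)^{-1}\|\le c^{r+3}\|F\|$, forcing $F=0$ since $c^{r+3}<k^r$ for $r\ge 1$; surjectivity then comes for free from finite-dimensionality, exactly as in your reduction. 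Your factorization $C_L=k^r\,C_M$ with $M=I(k)^{-1}L$ plays the same role as the paper's insertion of $I(k)L^{-1}$ and $LI(k)^{-1}$, and both proofs ultimately rest on the identical quantitative fact that the footnote flags: the anchor $k^r$ grows geometrically while the perturbation drifts only like $c^{r+3}$ (respectively $(1+\delta)^{r+2}$), with the exponent $r+2$ coming in both cases from the degree bound on components of elements of $\F^{(r)}$. What your version buys is sharper information --- all eigenvalues of $C_L|_{\F^{(r)}}$ have modulus strictly greater than $1$, uniformly in $r$, which in principle gives a uniform bound on $\|(C_L-\mathrm{id})^{-1}\|$ after the triangularization is controlled --- at the cost of complexifying and invoking the standard (but worth stating carefully) fact that an upper-triangular $M$ induces a triangular $C_M$ on a suitably ordered monomial basis with diagonal entries $\mu_j\prod_i\mu_i^{-\alpha_i}$. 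The paper's norm estimate avoids all of that and is the more economical argument for the bare statement; both are valid.
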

\begin{proof}
Let $\|\cdot\|$ be a norm on $\R^{2n+1}$.
Fix a constant $c > 1$ and a neighborhood $\mathcal{U} \subset \P_0\cap GL(2n+1, \R)$ of $(k)$ such that $c^4 < k$ and $\|LI(k)^{-1}\|, \|I(k)L^{-1}\| < c$ for $L \in \mathcal{U}$, where $\|L\| = \max_{\|x\| =1 } \|Lx\|$.
Define a norm on $\mathcal{F}^{(r)}$ by $\|F\| = \max_{\|x\| =1 } \|F(x)\|$.
Since each component of $F \in \mathcal{F}^{(r)}$ is a polynomial in $x_1, \dots, x_{2n+1}$   of degree at most $r+2$, we see that $\| F \circ L\| \leq \max\{\|F\|\|L\|^{r+2}, \|F\|\}$ for $L \in GL(2n+2, \R)$.

We will show that $F = 0$ if $F \in \F^{(r)}$, $r \geq 2$, and $L^{-1}\circ F \circ L = F$.
Since $I(k)\circ F \circ I(k)^{-1} = k^r F$ for all $F \in \mathcal{F}^{(r)}$,
\[
\begin{split}
k^r\|F\| &= \|I(k)\circ F \circ I(k)^{-1}\| = \|I(k)\circ L^{-1}\circ F \circ L  \circ I(k)^{-1}\| \\
&\leq \|I(k)L^{-1}\| \|F\| \|LI(k)^{-1}\|^{r+2} \leq c^{r+3}\| F\|.
\end{split}\]
As $c^{r+3} < k^r$, we see that $\|F\| = 0$.
\end{proof}

\section{Heisenberg space}\label{Heis}
Let $SU(n+1, 1) = KAN$ be the Iwasawa decomposition as in Section \ref{stdsu}.
The goal of this section is to set up terminology for affine spaces modeled on $\n = \mathrm{Lie}(N)$, which will be used in Subsection \ref{persistcycl} and Subsection \ref{persistnilp}.

\begin{DEF}
A smooth manifold $M$ equipped with a simply transitive smooth action $\rho$ of $N$ is called a \textup{Heisenberg space}.
The action $\rho$ will be called a \textup{Heisenberg structure} of $M$.
\end{DEF}

\begin{DEF}
Let $M$ be a smooth manifold with $\dim M = 2n+1$ and $\mathfrak{X}(M)$ be the Lie algebra of smooth vector fields on $M$.
A subalgebra $\mathfrak{h} \subset \mathfrak{X}(M)$ will be called a \textup{Heisenberg connection} if $\h$ is isomorphic to $\n$ as a Lie algebra and $\n$ is a frame field on $M$, i.e., 
\[
T_xM = \{X(x) \mid X \in \h\}
\]
for all $x \in M$.
\end{DEF}

For the homomorphism $\rho_*:\n \to \mathfrak{X}(M)$ induced by a Heisenberg structure, $\rho_*(\n) \subset \mathfrak{X}(M)$ is a Heisenberg connection.
A Heisenberg connection $\h$ is locally spanned by
\[
\partial_{2i-1} - x_{2i}\partial_{2n+1}, \partial_{2i} + x_{2i -1}\partial_{2n+1}, \partial_{2n+1}\;(1 \leq i \leq n)
\]
where $\partial_i = \partial /\partial x_i$.
The subalgebra of $\mathfrak{X}(\R^{2n+1})$ spanned by these vector fields will be called the \textit{standard Heisenberg connection} on $\R^{2n+1}$.
For any Heisenberg space $(M, \rho)$, the Heisenberg connection $\rho_*(\n)$ on $M$ is smoothly  conjugate to the standard Heisenberg connection on $\R^{2n+1}$.

\begin{lem}\label{Hpres}
Let $(M, \rho)$ be a Heisenberg space.
\begin{enumerate}
\item There is a Heisenberg structure $\sigma$ on $M$ which preserves the Heisenberg structure $\rho$, i.e., $\rho(g_1)\sigma(g_2) = \sigma(g_2)\rho(g_1)$ for all $g_1, g_2 \in N$.
\item Let $\sigma$ be a Heisenberg structure preserving $\rho$, $U$ a neighborhood of a point $x \in M$,  and $f:U \to f(U) \subset M$ a diffeomorphism onto its image.
Assume $f$ preserves $\rho$, i.e., $f(\rho(g)x) = \rho(g)f(x)$ if $\rho(g)x, x \in U$.
Then there is a neighborhood $V \subset U$ of $x$ and $g \in N$ such that $f|_V = \sigma(g)|_V$.

\item For any Heisenberg structures $\sigma, \sigma'$ preserving $\rho$, $\sigma(N) = \sigma'(N)$.
\end{enumerate}
\end{lem}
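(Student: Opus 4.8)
The plan is to exploit the fact that a Heisenberg space is a \emph{torsor} under $N$: fixing a basepoint trivializes it and turns $\rho$ into the left regular representation, after which both the existence statement and the rigidity statements become elementary facts about left and right translations on $N$. Concretely, fix $x_0 \in M$ and consider the orbit map $\Psi \colon N \to M$, $\Psi(g) = \rho(g)x_0$. Since $\rho$ is simply transitive, $\Psi$ is a diffeomorphism, and under this identification $\rho(h)$ becomes left translation $L_h \colon g \mapsto hg$.

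For part (i) I would define $\sigma(h) := \Psi \circ R_{h^{-1}} \circ \Psi^{-1}$, where $R_a \colon g \mapsto ga$ is right translation. A direct check shows $\sigma$ is a smooth simply transitive action (because right translation is), hence a Heisenberg structure, and $\sigma$ preserves $\rho$ because left and right translations commute: $L_h R_{g^{-1}} = R_{g^{-1}} L_h$, i.e. $\rho(h)\sigma(g) = \sigma(g)\rho(h)$.

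The engine for parts (ii) and (iii) is the elementary observation that a local diffeomorphism commuting with all left translations is a right translation. If $\phi$ satisfies $\phi(hg) = h\phi(g)$ near a point $g_0$, put $a = g_0^{-1}\phi(g_0)$; then for $g$ near $g_0$ one computes $\phi(g) = \phi((gg_0^{-1})g_0) = (gg_0^{-1})\phi(g_0) = ga = R_a(g)$, so $\phi = R_a$ on a connected neighborhood, the element $a$ being constant by connectedness. Applying this to each $\sigma(h)$, where $\sigma$ is any Heisenberg structure preserving $\rho$ transported to $N$ via $\Psi$, gives $\sigma(h) = R_{\beta(h)}$ globally (here I use that $N$ is connected). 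Because $\sigma$ is a simply transitive left action and $R_a R_b = R_{ba}$, the map $\beta$ is a bijective anti-homomorphism of $N$; in particular $\sigma(N) = \{R_a \mid a \in N\}$, which does not depend on $\sigma$. This immediately yields part (iii): $\sigma(N) = \{R_a \mid a\in N\} = \sigma'(N)$.

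For part (ii) I would transport $f$ to $N$ and apply the same observation: from $f(\rho(h)g) = \rho(h)f(g)$ we get $f = R_{n_0}$ on a connected neighborhood $V$ of $x$, for some $n_0 \in N$. Since $\beta$ is surjective, choose $g \in N$ with $\beta(g) = n_0$; then $\sigma(g) = R_{n_0} = f$ on $V$, as required. I do not expect a serious obstacle: the content is entirely the two-sided translation structure of $N$, and the explicit Heisenberg product is irrelevant beyond $N$ being a connected Lie group. The one place to be careful is the distinction between left actions and right translations — composing right translations reverses order, so $\sigma$ corresponds to $h \mapsto R_{h^{-1}}$ rather than $h \mapsto R_h$, and $\beta$ is an anti-homomorphism rather than a homomorphism; this bookkeeping does not affect the conclusions but must be tracked to ensure that $\sigma$ is a genuine (left) action and that $\beta$ is a genuine bijection.
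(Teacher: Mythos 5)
Your proposal is correct and is essentially the paper's argument in group-theoretic rather than infinitesimal clothing: the paper also trivializes $M$ as a torsor over $N$ (reducing to the standard model on $\R^{2n+1}$), realizes the commuting structure $\sigma$ as the ``right-translation'' action by exponentiating the commuting Heisenberg connection $\h'$, and proves (ii) by the same equivariance computation (composing with $\sigma(g)^{-1}$ so that the map fixes a point and is therefore the identity on the $\rho$-orbit). The bookkeeping you flag (inverting $h$ so that $h \mapsto R_{h^{-1}}$ is a genuine left action, and $\beta$ being a bijective anti-homomorphism) is exactly right and is the only point at which care is needed.
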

\begin{proof}
It is sufficient to consider a Heisenberg structure $\rho$ on $\R^{2n+1}$ such that $\rho_*(\n)$ is the standard Heisenberg connection.
Let $\h'$ be the  Heisenberg connection on $\R^{2n+1}$ spanned by
\[
\partial_{2i-1} + x_{2i}\partial_{2n+1}, \partial_{2i} - x_{2i -1}\partial_{2n+1}, \partial_{2n+1}\;(1 \leq i \leq n).
\]
It is easy to see that $[X, Y] = 0$ for all $X\in \rho_*(\n)$ and $Y \in\h'$.
Fix an isomorphism $\varphi:n \to \h'$ of Lie algebras.
Then $\sigma(\exp(X)) = \exp(\varphi X)\;(X \in \n)$ defines the Heisenberg structure $\sigma$ preserving $\rho$.

Let $U$ a neighborhood of a point $x \in \R^{2n+1}$,  and $f:U \to f(U) \subset \R^{2n+1}$ a diffeomorphism onto its image which preserves $\rho$.
Let $g \in N$ be the element with $\sigma(g)x = f(x)$.
Then the diffeomorphism $\sigma(g)^{-1} \circ f:U \to \rho(g)^{-1} \circ f(U)$ also preserves $\rho$ and $\sigma(g)^{-1} \circ f(x) = x$.
It follows that $\sigma(g)^{-1} \circ f$ is the identity on a neighborhood of $x$.

Let $\sigma'$ be another Heisenberg structure which preserves $\rho$.
Fix $g \in N$.
By (ii), we obtain a neighborhood of a point of $\R^{2n+1}$ and an element $g' \in N$ such that $\sigma(g) = \sigma'(g')$ on the neighborhood.
Thus $\sigma(g) = \sigma'(g')$ on $\R^{2n+1}$.
\end{proof}
The following is an immediate corollary of the above lemma.
\begin{cor}\label{Hconnpres}
Let $\h$ be a Heisenberg connection on a smooth manifold $M$.
Then the centralizer $Z(\h)$ of $\h$ in $\mathfrak{X}(M)$ is a Heisenberg connection.
\end{cor}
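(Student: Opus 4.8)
The corollary asserts that the centralizer $Z(\h)$ of a Heisenberg connection $\h$ in $\mathfrak{X}(M)$ is itself a Heisenberg connection. Let me look at what I have to work with. A Heisenberg connection is a subalgebra isomorphic to $\n$ that frames $TM$. The preceding remarks say that $\h$ is smoothly conjugate to the standard Heisenberg connection on $\R^{2n+1}$, and Lemma \ref{Hpres} is all about Heisenberg structures $\sigma$ that *preserve* a structure $\rho$ — meaning the two $N$-actions commute. I need to connect "centralizer of a connection in the Lie algebra of vector fields" to "structure preserving a structure," which is the infinitesimal vs. integrated form of commuting.

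**Proof plan.**

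The plan is to integrate the connection $\h$ to a Heisenberg structure and transport everything to the model so that Lemma \ref{Hpres} applies directly. First I would observe that since $\h$ frames $TM$ and is isomorphic to $\n$, it integrates (at least locally, and globally after the stated conjugacy) to a Heisenberg structure $\rho$ on $M$ with $\rho_*(\n) = \h$; indeed the remark right before the corollary says $\h$ is smoothly conjugate to the standard Heisenberg connection, so without loss of generality $M = \R^{2n+1}$ and $\h$ is the standard one, arising from a Heisenberg structure $\rho$ with $\rho_*(\n) = \h$. The key reduction is: a vector field $Y \in \mathfrak{X}(M)$ lies in $Z(\h)$, i.e. $[X, Y] = 0$ for all $X \in \h = \rho_*(\n)$, precisely when the flow of $Y$ commutes with every $\rho(g)$, $g \in N$; that is, when $Y$ generates a local one-parameter group preserving $\rho$. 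So $Z(\h)$ is exactly the Lie algebra of (germs of) $\rho$-preserving flows.

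**Main step.**

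Now I invoke Lemma \ref{Hpres}. By part (i) there exists a Heisenberg structure $\sigma$ preserving $\rho$, and by the proof of the lemma its connection $\sigma_*(\n)$ is the explicit complementary Heisenberg connection $\h'$ spanned by $\partial_{2i-1} + x_{2i}\partial_{2n+1}$, $\partial_{2i} - x_{2i-1}\partial_{2n+1}$, $\partial_{2n+1}$, which commutes with $\h$ and frames $TM$. Thus $\h' \subset Z(\h)$, giving one containment: $Z(\h)$ contains a Heisenberg connection. For the reverse, I need that $Z(\h)$ is not larger — that any $\rho$-commuting vector field already lies in $\sigma_*(\n)$. This follows from parts (ii) and (iii): part (ii) says any local $\rho$-preserving diffeomorphism fixing a point agrees locally with some $\sigma(g)$, so integrating $Y \in Z(\h)$ along its flow and applying (ii) forces the flow to lie in $\sigma(N)$; differentiating, $Y \in \sigma_*(\n) = \h'$. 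Hence $Z(\h) = \h'$, which is a Heisenberg connection, proving the corollary.

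**Main obstacle.**

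The step I expect to be delicate is the reverse containment, i.e. ruling out that $Z(\h)$ is strictly larger than $\h'$. The subtlety is that commuting with an $\n$-action at the infinitesimal level ($[\h, Y] = 0$) must be upgraded to the statement that the flow of $Y$ genuinely intertwines the $N$-action, so that Lemma \ref{Hpres}(ii) can be applied; this requires the flow of $Y$ to be defined long enough near each point and the simple transitivity of $\rho$ to pin down the flow as exactly a $\sigma(g)$. The cleanest route is to avoid global flow existence entirely: work with germs, use that $\h'$ already frames $TM$ so that any $Y$ decomposes as $Y = \sum_j c_j(x) Y_j$ with $Y_j$ a basis of $\h'$, and then show $[\,\h, Y\,] = 0$ forces the coefficient functions $c_j$ to be $\rho$-invariant, hence constant by transitivity of $\rho$ — which immediately gives $Y \in \h'$. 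This coefficient argument is the crux and makes the reverse inclusion a short computation rather than a flow-time issue.
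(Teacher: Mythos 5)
Your proposal is correct and follows essentially the route the paper intends: reduce to the standard Heisenberg connection, exhibit the explicit commuting frame $\h'$ inside $Z(\h)$ via Lemma \ref{Hpres}, and then establish the reverse inclusion $Z(\h)\subseteq\h'$. Your coefficient argument for that reverse inclusion (writing $Y=\sum_j c_jY_j$ in the frame $\h'$ and using that $\h$ frames $TM$ to force $dc_j=0$) is a clean, purely infinitesimal way to finish that sidesteps the flow-integration subtleties in Lemma \ref{Hpres}(ii), but it is the same overall strategy as the paper's.
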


Note that if $\h$ is the standard Heisenberg connection on $\R^{2n+1}$, then $Z(\h)$ is spanned by
\[
\partial_{2i-1} + x_{2i}\partial_{2n+1}, \partial_{2i} - x_{2i -1}\partial_{2n+1}, \partial_{2n+1}\;(1 \leq i \leq n).
\]

The automorphism $f$ of $\n = \mathfrak{g}^{(+1)} \oplus \mathfrak{g}^{(+2)}$ defined by
\[
f(X) =
\begin{cases}
 (\log \lambda)X & (X \in \mathfrak{g}^{(+1)})\\
 2(\log \lambda)X & (X \in \mathfrak{g}^{(+2)})
 \end{cases}
\]
is called the \textit{dilation} by a constant $\lambda >0$.

\begin{DEF}
Let $\h \subset \mathfrak{X}(M)$ be a Heisenberg connection.
A vector field $E \in \mathfrak{X}(M)$ is called a \textup{dilation} by $\lambda > 0$ of $\h$ if $\mathrm{ad}(E)$ preserves $\h$ and the endomorphism on $\h \cong \n$ induced by $\mathrm{ad}(E)$ is the dilation by $\lambda$.
\end{DEF}

Note that the definition of the dilation does not depend on the choice of an isomorphism between $\h$ and $\n$.
By definition, if $E , E' \in \mathfrak{X}(M)$ are dilations of $\h$ by $\lambda >0$, then  $E - E' \in Z(\h)$.

\begin{lem}\label{Hdil}
Let $\h \subset \mathfrak{X}(M)$ be a Heisenberg connection.
There is a dilation $E$ of $\h$ by $\lambda >0$ which is also a dilation of $Z(\h)$ by $\lambda$.
\end{lem}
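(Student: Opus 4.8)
The plan is to reduce to the standard Heisenberg connection and to split the statement into two parts: first produce a dilation of $\h$, then observe that any dilation of $\h$ is automatically a dilation of $Z(\h)$ with the same $\lambda$. Since a Heisenberg connection is conjugate to the standard one on $\R^{2n+1}$, I may assume $M = \R^{2n+1}$ with $\h$ spanned by $X_{2i-1} = \partial_{2i-1} - x_{2i}\partial_{2n+1}$, $X_{2i} = \partial_{2i} + x_{2i-1}\partial_{2n+1}$ and $T = \partial_{2n+1}$; by the remark preceding the lemma, $Z(\h)$ is then spanned by $Y_{2i-1} = \partial_{2i-1} + x_{2i}\partial_{2n+1}$, $Y_{2i} = \partial_{2i} - x_{2i-1}\partial_{2n+1}$ and the same central field $T$.

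For the construction I would take the weighted Euler field $E = -\sum_{a=1}^{2n} x_a\partial_a - 2x_{2n+1}\partial_{2n+1}$. A short bracket computation gives $[E, X_a] = X_a$, $[E, Y_a] = Y_a$ for $1 \le a \le 2n$ and $[E, T] = 2T$. Because $T$ spans the common center $[\h,\h] = [Z(\h), Z(\h)]$ while the $X_a$ (resp.\ $Y_a$) descend to a basis of $\h$ (resp.\ $Z(\h)$) modulo the center, these identities say exactly that $\mathrm{ad}(E)$ induces the dilation by $\lambda = e$ on both $\h$ and $Z(\h)$, which proves the lemma in the model.

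It is worth isolating why the same $E$ serves both connections, since this makes the $Z(\h)$ half costless. For $Y \in Z(\h)$ and $X \in \h$ the Jacobi identity gives $[[E,Y],X] = [[E,X],Y] + [E,[Y,X]] = [[E,X],Y]$, and the right-hand side vanishes because $[E,X] \in \h$ and $Y \in Z(\h)$ commute; hence $[E,Y] \in Z(\h)$, so $\mathrm{ad}(E)$ preserves $Z(\h)$ as soon as it preserves $\h$. Since $T$ lies in $Z(\h)$ and spans its center, $\mathrm{ad}(E)$ acts there by $2\log\lambda$, and the computation above shows it acts by $\log\lambda$ modulo the center; a semisimple derivation of $\n$ with these eigenvalues is the dilation, so $E$ is a dilation of $Z(\h)$ by $\lambda$ automatically.

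The step I expect to demand the most care is the reduction to the model, equivalently the production of a single globally defined $E$. Locally $\h$ is standard and the Euler field is available, but a dilation is a genuinely global object: its flow rescales the frame and hence the associated volume, so on a general manifold it need not exist. For the spaces relevant here, which are modeled on $\n$ and conjugate to $\R^{2n+1}$, this obstruction is absent and the explicit $E$ above works; once $E$ exists, the transfer from $\h$ to $Z(\h)$ is the soft Jacobi argument and requires nothing further.
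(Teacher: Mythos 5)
Your proof is correct and is essentially the paper's own argument: reduce to the standard Heisenberg connection on $\R^{2n+1}$ and exhibit the weighted Euler field, which the paper writes as $E = -(\log\lambda)(x_1\partial_1 + \dots + x_{2n}\partial_{2n} + 2x_{2n+1}\partial_{2n+1})$ so that general $\lambda$ is covered (your $E$ handles $\lambda = e$; just rescale by $\log\lambda$). The Jacobi-identity observation that any dilation of $\h$ automatically preserves $Z(\h)$ is a nice addition but not needed once the explicit $E$ is checked on both frames.
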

\begin{proof}
We may assume that $\h$ is the standard Heisenberg connection on $\R^{2n+1}$.
Then 
\[
E = -(\log \lambda)(x_1\partial_1 + \dots + x_{2n}\partial_{2n} + 2x_{2n+1}\partial_{2n+1})
\]
is a dilation of both $\h$ and $Z(\h)$ by $\lambda$.
\end{proof}

\section{Local rigidity of the standard actions} \label{GtoL}
As mentioned in Section \ref{std}, the action $\rho^0$ of $\Gamma$ on $S^{2n+1}$ admits the global fixed point $p^0 \in S^{2n+1}$.
If $\sigma = \rho^0 \circ \iota$ is  a standard homomorphism with the embedded image $\iota(\Gamma)$ is a standard subgroup whose associated Iwasawa decomposition coincides with that of $\Gamma$, then the point $p^0$ is also the global fixed point of $\sigma$.
The coordinate chart $\phi^0$ around $p^0$ induces a group homomorphism $\phi^0_*:\mathrm{Diff}(S^{2n+1}, p^0) \to \mathcal{G} = \mathcal{G}(\R^{2n+1}, \mathrm{O})$, where $\mathrm{Diff}(S^{2n+1}, p^0)$ is the group of  diffeomorphism of $S^{2n+1}$ that fix $p^0$.
For each standard action $\sigma$ as above, $\phi^0$ induces the homomorphism $\phi^0_*\sigma$ of $\Gamma$ into $\mathcal{G}$ defined by $\phi^0_*\sigma(\gamma) = \phi^0_*(\sigma(\gamma))$, $\gamma \in \Gamma$.
Such a homomorphism will be called a \textit{standard homomorphism} from $\Gamma$ into $\mathcal{G}$.

\begin{prop}\label{Main_1}
The family of standard homomorphisms in $\mathrm{Hom}(\Gamma, \mathcal{G})$ is locally rigid, where $\mathcal{G}$ is equipped with $C^3$-topology.
\end{prop}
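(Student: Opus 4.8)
The plan is to prove Proposition~\ref{Main_1} by showing that any homomorphism $\rho\colon\Gamma\to\mathcal{G}$ that is $C^3$-close to a standard homomorphism $\phi^0_*\sigma$ is conjugate, by an element of $\mathcal{G}$, to a standard homomorphism. Since conjugacy in $\mathrm{Diff}(S^{2n+1},p^0)$ descends to conjugacy in $\mathcal{G}$ under $\phi^0_*$, it suffices to work entirely with germs at $\mathrm{O}\in\R^{2n+1}$. The first thing I would exploit is the special structure of the generator $a$: by Lemma~\ref{std2}(i), the standard germ $\phi^0_*(\sigma(a))$ is the linear map $I(k)$, and $I(k)$ is a contraction. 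Thus any $\rho$ close to $\phi^0_*\sigma$ sends $a$ to a germ $\rho(a)$ whose linear part is $C^1$-close to $I(k)$, so $\rho(a)$ is again a contraction. This is the point where Sternberg's theory enters.

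\textbf{Normalizing the contraction.}

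First I would apply Sternberg's normalization to $\rho(a)$. Passing to the formal transformation $F=D_\mathrm{O}\rho(a)\in\mathcal{F}$, Proposition~\ref{St2} tells us that $F$ is conjugate in $\mathcal{F}$ to its $\P_0$-component $F^{(0)}$ (after a preliminary linear conjugacy killing $F^{(-1)}$); and since $\rho(a)$ is a contraction, Proposition~\ref{St1} lets us realize any formal conjugacy by an honest germ in $\mathcal{G}$. Hence, after conjugating the whole homomorphism $\rho$ by a single element of $\mathcal{G}$, I may assume $\rho(a)$ is \emph{formally} of the normalized shape $F^{(0)}$, i.e.\ its Taylor series is homogeneous of weighted degree $0$ with respect to $I(k)$. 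The key gain is that the centralizer condition becomes tractable: the relations $ab_ia^{-1}=b_i^k$, $aca^{-1}=c^{k^2}$, $[b_i,b_j]=c^{m_{ij}}$ from presentation~(\ref{nbc}) now constrain the germs $\rho(b_i),\rho(c)$ through their interaction with a normalized, weight-graded $\rho(a)$.

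\textbf{Recovering the nilpotent part and the Heisenberg structure.}

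Next I would analyze the images of $b_1,\dots,b_{2n},c$ under these constraints. The relation $aca^{-1}=c^{k^2}$ forces $\rho(c)$ to be, to leading order, a flow in the center of the standard Heisenberg connection; combined with $ab_ia^{-1}=b_i^k$ and the commutator relations $[b_i,b_j]=c^{m_{ij}}$, the germs $\rho(b_i),\rho(c)$ assemble into a (local) action that commutes with a dilation $E$ of a Heisenberg connection $\h$ in the sense of Section~\ref{Heis}. Here is where Corollary~\ref{Hconnpres} and Lemmas~\ref{Hpres} and~\ref{Hdil} do the work: the nilpotent generators preserve a Heisenberg connection whose centralizer $Z(\h)$ is again a Heisenberg connection, and the element $a$ supplies a dilation that is simultaneously a dilation of $\h$ and of $Z(\h)$. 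This rigidly pins down the local model as the standard one, so after a final conjugacy the germs $\rho(b_i),\rho(c),\rho(a)$ coincide with those of a standard homomorphism.

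\textbf{The main obstacle.}

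The hard part, I expect, will be the passage from formal normalization to a simultaneous conjugacy of the \emph{entire} homomorphism rather than of $\rho(a)$ alone. Proposition~\ref{St2} normalizes one contraction; one must check that the conjugating germ produced for $\rho(a)$ does not destroy the closeness or the algebraic relations needed to then straighten out the nilpotent generators, and that the two stages (Sternberg for $a$, Heisenberg rigidity for $\Lambda$) can be carried out compatibly. Controlling the interaction between the weight-grading from $I(k)$ and the group relations of $\Gamma$---in particular ensuring the normal form of $\rho(a)$ is \emph{preserved} under the subsequent Heisenberg straightening---is the crux; this is precisely the difficulty flagged in the introduction as the obstruction absent in Asaoka's real-hyperbolic case, where $\rho(a)$ could be linearized outright.
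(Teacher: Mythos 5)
Your first stage (Sternberg for the contraction $a$) matches the paper: one normalizes $D_\mathrm{O}\rho(a)$ formally via Proposition \ref{St2} and upgrades to a germ conjugacy via Proposition \ref{St1}, exactly as in the paper's derivation of Proposition \ref{Main_1} from the formal statement Proposition \ref{Main_2}. But your second stage has a genuine gap, in two respects. First, the Heisenberg-connection machinery of Section \ref{Heis} cannot be applied directly to the germs $\rho(b_i),\rho(c)\in\mathcal{G}$: in the paper, the Heisenberg connection is built from the image of the Lie algebra $\n$ under $\bar f_*$, and producing that requires extending the homomorphism of the lattice $\Lambda$ to the ambient nilpotent Lie group $N$ via Theorem \ref{nilpext}, which is only available after projecting to the finite-dimensional simply connected nilpotent jet groups $\mathcal{Q}_r\subset\P_r$. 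A perturbed germ $\rho(b_i)$ does not a priori lie on any one-parameter flow or preserve any frame field, so there is no Heisenberg connection in sight at the level of $\mathcal{G}$ or $\mathcal{F}$; that machinery lives entirely inside the proof of the formal/jet-level rigidity (Propositions \ref{persistcyclprop} and \ref{persistnilpprop}), not in the passage from $\mathcal{F}$ back to $\mathcal{G}$.

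Second, and more importantly, you are missing the tool that actually closes the loop from formal to germ conjugacy for the non-contracting generators. Proposition \ref{St1} applies only to contractions, so it handles $a$ but not $b_i$ or $c$, whose linear parts are unipotent. The paper's substitute is Lemma \ref{2_to_1}: if two germs $g_1,g_2$ have the same Taylor expansion, the same unipotent linear part, and both satisfy $fg_i=g_i^mf$ with $f$ the contraction $I(k)$, then $g_1=g_2$; the proof is a renormalization estimate $\Delta\le (mc^{r_0+1}/k^{r_0-2})\Delta$ that uses the group relation to push points toward the fixed point. Applying this with $g_1=\rho(\gamma)$ and $g_2=h\sigma(\gamma)h^{-1}$ for $\gamma=b_i,c$ (which satisfy the same relation with the same $f=\rho(a)=h\sigma(a)h^{-1}$ and agree formally by Proposition \ref{Main_2}) gives equality of germs with no further conjugation. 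Without this uniqueness lemma, knowing that the Taylor series of $\rho(b_i)$ agrees with the standard one does not yield equality of germs, and the ``Heisenberg straightening'' you propose has nothing to act on. You correctly flagged the compatibility of the two stages as the crux, but the resolution is this uniqueness-of-germs argument, not a second geometric conjugation.
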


In this section, we will prove Theorem \ref{MainThm} using Proposition \ref{Main_1}.
The proof of Proposition \ref{Main_1} will be postponed to the next section.

To derive Theorem \ref{MainThm} from  Proposition \ref{Main_1}, we will first prove the following proposition, which  allows us to extend a local conjugacy between local actions to a global conjugacy.
\begin{prop}\label{LtoG}
Let $\rho \in \mathrm{Hom}(\Gamma, \mathrm{Diff}(S^{2n+1}, p^0))$ be an action with a global fixed point $p^0 \in S^{2n+1}$.
Assume that the induced homomorphism $\phi^0_*\rho\in \mathrm{Hom}(\Gamma, \mathcal{G})$ is conjugate to a standard one $\phi^0_*\sigma \in \mathrm{Hom}(\Gamma, \mathcal{G})$, where $\sigma \in \mathrm{Hom}(\Gamma, \mathrm{Diff}(S^{2n+1}, p^0))$ is a standard action.
Then the homomorphism $\rho \in \mathrm{Hom}(\Gamma, \mathrm{Diff}(S^{2n+1}, p^0))$ is conjugate to $\sigma$ in $\mathrm{Hom}(\Gamma, \mathrm{Diff}(S^{2n+1}, p^0))$.
\end{prop}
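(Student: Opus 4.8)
The plan is to promote a local conjugacy (between germs at the global fixed point) to a genuine conjugacy of actions on all of $S^{2n+1}$. Let me think about what structures are available.

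We have $\rho, \sigma \in \mathrm{Hom}(\Gamma, \mathrm{Diff}(S^{2n+1}, p^0))$, with $\phi^0_*\rho$ conjugate to $\phi^0_*\sigma$ in $\mathcal{G} = \mathcal{G}(\R^{2n+1},\mathrm{O})$. So there is a germ $h \in \mathcal{G}$ with $h \cdot \phi^0_*\rho(\gamma) \cdot h^{-1} = \phi^0_*\sigma(\gamma)$ for all $\gamma$. The germ $h$ is represented by an actual diffeomorphism $H$ from some neighborhood $U$ of $p^0$ onto another neighborhood, conjugating $\rho$ and $\sigma$ on a (possibly smaller) neighborhood of $p^0$. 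The issue is purely global: $H$ is only defined near $p^0$, and I must extend it to a diffeomorphism of $S^{2n+1}$ conjugating $\rho$ to $\sigma$ everywhere.

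The element $a \in \Gamma$ is the key to the extension. Both $\rho(a)$ and $\sigma(a)$ are diffeomorphisms of $S^{2n+1}$ fixing $p^0$, and near $p^0$ they are conjugate to the linear contraction $I(k)$ (for $\sigma$ this is Lemma \ref{std2}(i); for $\rho$ it follows since $\phi^0_*\rho(a)$ is conjugate in $\mathcal{G}$ to $\phi^0_*\sigma(a)$, hence also a contraction with the same differential). So $p^0$ is an attracting fixed point of $\rho(a)$ and of $\sigma(a)$, and its basin of attraction is a dense open set — in fact all of $S^{2n+1}\setminus\{q\}$ for a single repelling fixed point $q$ in the standard model, and I expect the same persists for $\rho$ by structural stability of the north–south dynamics. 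I would use $a$'s dynamics to spread out the local conjugacy. Precisely, for a point $x$ in the basin, choose $N$ large so that $\rho(a)^N x \in U$, and define
\[
\widetilde H(x) = \sigma(a)^{-N}\, H\, \rho(a)^{N}(x).
\]
Because $H$ already intertwines $\rho(a)$ and $\sigma(a)$ on $U$, this is independent of the choice of $N$, and it patches together to a smooth map on the whole basin that conjugates $\rho(a)$ to $\sigma(a)$.

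The main point is then to check that $\widetilde H$ conjugates every $\rho(\gamma)$ to $\sigma(\gamma)$, not just $\rho(a)$. For this I use that $H$ does so near $p^0$, that $\widetilde H$ agrees with $H$ near $p^0$, and that $\Gamma$ is generated by $a$ together with $\Lambda = \langle b_1,\dots,b_{2n},c\rangle$. Given $\gamma \in \Gamma$ and a basin point $x$, I would push $x$ into $U$ by $\rho(a)^N$, apply the intertwining relation $\rho(\gamma)$ satisfies with $a$ (the relations $ab_ia^{-1}=b_i^k$, $aca^{-1}=c^{k^2}$ from the presentation (\ref{nbc}) mean $\rho(a)^N\rho(\gamma)\rho(a)^{-N}$ is again some element $\rho(\gamma')$ of the action, so the orbit of $x$ under $\Gamma$ stays controllable), and reduce to the already-known local identity; the relations let me commute $\rho(a)^N$ past $\rho(\gamma)$ at the cost of replacing $\gamma$ by a word whose image still lands in $U$ for $N$ large. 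Two smooth maps agreeing on the nonempty open set $U$ and both equivariant agree on the whole connected basin, giving $\widetilde H \circ \rho(\gamma) = \sigma(\gamma)\circ \widetilde H$ there.

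The final, and I expect hardest, step is to extend $\widetilde H$ across the complementary repelling set to obtain a diffeomorphism of all of $S^{2n+1}$. In the standard model the complement is the single point $q^0$, which is the unique repelling fixed point of $a$ and the global fixed point of the reversed dynamics; I would analyze the behavior of $\widetilde H$ near $q$ by the symmetric argument using $\rho(a)^{-1}$, whose attracting fixed point is $q$, and invoke the other coordinate chart $\psi^0$ together with the explicit form in Lemma \ref{std1}. The delicate issue is smoothness of the extension at $q$: one must verify that the conjugacy constructed from the contracting dynamics extends $C^\infty$ across the repeller, which I would handle by showing that near $q$ the action $\rho$ is itself $C^\infty$-conjugate to a standard one (again via the local rigidity at the fixed point $q$, applied to $\rho(a)^{-1}$) and matching the two local conjugacies on the overlap, where they differ by a self-conjugacy of the standard action that must be realized by an element of $\mathrm{Diff}(S^{2n+1})$. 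Verifying that the two germs glue to a single global diffeomorphism — i.e. that the transition is itself smooth and invertible at $q$ — is where the real work lies.
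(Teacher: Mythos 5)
Your overall strategy---spread a germ conjugacy at $p^0$ over the sphere by conjugating with the dynamics of a group element---is the right one, but the element you chose creates a gap that your proposal explicitly leaves open, and it is exactly the gap the paper's proof is designed to avoid. Iterating $\rho(a)$ only propagates the conjugacy over the basin of attraction of $p^0$, so you are forced to (a) know that this basin is $S^{2n+1}$ minus a single repelling point, and (b) extend the conjugacy smoothly across that repeller. Neither is available from the hypotheses of Proposition \ref{LtoG}: the proposition assumes only that $\rho$ has global fixed point $p^0$ and that the \emph{germ} of $\rho$ at $p^0$ is conjugate to a standard one, not that $\rho$ is $C^1$-close to $\rho^0$, so ``structural stability of north--south dynamics'' is not justified here. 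Worse, your proposed remedy for (b)---apply local rigidity at $q$ to $\rho(a)^{-1}$ and match the two local conjugacies---cannot work as stated, because $q^0$ is a fixed point of $\sigma(a)$ only; the generators $b_i$ and $c$ move $q^0$, so there is no local action of $\Gamma$ at $q$ to which any of the local rigidity machinery applies. You correctly flag that ``the real work lies'' there, but that work is the content of the proof and is not done.

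The paper sidesteps the repeller entirely by propagating with the central element $c$ instead of $a$. In the chart $\psi^0$ on $S^{2n+1}\setminus\{p^0\}$, $\sigma(c)$ is a translation (Lemma \ref{std1}), so with $U_R = S^{2n+1}\setminus(\psi^0)^{-1}([-R,R]^{2n+1})$ (a neighborhood of $p^0$ on which the germ conjugacy $h$ intertwines the generators), every point of $S^{2n+1}$---including $q^0$---satisfies $\sigma(c)^m(x)\in U_R$ for some $m$. One then sets $\bar h_m = \rho(c)^{-m}\circ h\circ\sigma(c)^m$ on $\sigma(c)^{-m}(U_R)$; well-definedness on overlaps is checked by connecting $c^m$ to $c^{m'}$ by a word in the generators of $\Lambda$ whose partial orbits stay in $U_R$, and the resulting globally defined local diffeomorphism $\bar h$ is a covering of $S^{2n+1}$, hence a diffeomorphism. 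Equivariance for $\Lambda$ uses that $c$ is central in $\Lambda$, and for $a$ uses $aca^{-1}=c^{k^2}$. If you want to salvage your approach, you would need to either import the closeness hypothesis into the proposition and carry out the smooth extension across the repeller in detail, or switch to propagating with $c$ as the paper does.
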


\begin{proof}
Fix a diffeomorphism $h'$ around $\mathrm{O}\in\R^{2n+1}$ fixing $\mathrm{O}$ whose germ is a conjugacy between $\phi^0_*\rho$ and $\phi^0_*\sigma$.
Using the coordinate chart $\psi^0$ on $ S^{2n+1}\setminus\{p^0\}$ defined in Section \ref{std}, put $K_R =  (\psi^0)^{-1}([ -R, R]^{2n+1})$ for $R > 0$.
Let $U_R = S^{2n+1} \setminus K_R$ be an  open neighborhood of $p^0 \in S^{2n+1}$.
By the assumption, for sufficiently large $R$,
\[
h^{-1} \circ \rho(\gamma) \circ h(x) = \sigma(\gamma)(x) \; ( \gamma \in \Lambda_{\pm1}\cup\{a^{-1}\}, x \in U_R), 
\]
where $h = (\phi^0)^{-1}\circ h' \circ \phi^0$ is a diffeomorphism around $\mathrm{O}\in S^{2n+1}$ fixing $\mathrm{O}$, and $\Lambda_{\pm1} = \{ b_1, b_1^{-1}, \dots, b_{2n}, b_{2n}^{-1}, c, c^{-1} \}$ is the system of generators of $\Lambda$.
For any $x \in S^{2n+1}\setminus\{p^0\}$, by Lemma \ref{std1}, there is an integer $m \in \mathbb{Z}$ with $\sigma(c)^m(x) \in U_R$.
Thus $S^{2n+1} = \bigcup_m\sigma(c)^{-m}(U_R)$.
Let $\bar{h}_m:\sigma(c)^{-m}(U_R) \to S^{2n+1}$ be the smooth map into its image defined by 
\[
\bar{h}_m(x) =  \rho(c)^{-m} \circ h \circ \sigma(c)^m(x).
\]
We will show that $\bar{h}_m(x) = \bar{h}_{m'}(x)$ for $x \in \sigma(c)^{-m}(U_R) \cap \sigma(c)^{-m'}(U_R)$ to obtain the map $\bar{h}:S^{2n+1} \to S^{2n+1}$.
By Lemma \ref{std1} and the definition of $K_R$, we see that if $x \in \sigma(c)^{-m}(U_R) \cap \sigma(c)^{-m'}(U_R)$, then there is a word $w = \gamma_l\dots\gamma_1$ on $\Lambda_{\pm1}$ such that $c^m = wc^{m'} \in \Lambda$ and that $\sigma(w_ic^{m'})(x) \in U_R$ for $1 \leq i \leq l$, where $w_i = \gamma_i\dots\gamma_1$.
\personalnote{
See Lemma \ref{std1}.
Fix a point $(x_0, x) \in \R^{2n+1}$.
Consider the orbit $\Lambda (x_0, x) \subset \R^{2n+1}$.
Note that the action of $N$ (and hence $\Lambda$) on $\R^{2n+1} = S^{2n+1}\setminus\{p^0\}$ is transitive. 
In view of Lemma \ref{std1} and the relations on $\Lambda$, we see that its projection to $\R^{2n}$ is $\{x + \sum_j n_jv_j \mid n_j \in \mathbb{Z}\}$, and that each fiber is of the form $y + t\mathbb{Z}$ for some $y \in \R$.
(Note that $\langle u_i, v_i \rangle$ must be vanished.)
Thus edging the points in $\Lambda (x_0, x)$ properly by segments, we obtain an embedded image $X$ of the Cayley graph of $\Lambda$ with respect to the system of generator $\Lambda_{\pm1}$.
Let $Y$ be a maximal subgraph of $X \setminus [-R, R]^{2n+1}$.
By construction, $Y$ is connected.
}
Then
\[\begin{split}
&\rho(w_ic^{m'})^{-1} \circ h \circ \sigma(w_ic^{m'})(x) \\
&= \rho(w_{i-1}c^{m'})^{-1} \circ \rho(\gamma_i)^{-1} \circ h \circ \sigma(\gamma_i)\circ \sigma(w_{i-1}c^{m'})(x) \\
&= \rho(w_{i-1}c^{m'})^{-1} \circ h \circ \sigma(w_{i-1}c^{m'})(x)
\end{split}\]
for $i = 1, \dots, l$, where $w_0 =1 \in \Lambda$.
It follows that  $\rho(c^{-m}) \circ h \circ \sigma(c^m)(x) = \rho(c^{m'})^{-1} \circ h \circ \sigma(c^{m'})(x)$.
Thus $\bar{h}$ is well-defined.

Since $\bar{h}$ is locally diffeomorphic, $\bar{h}$ is a smooth covering map on $S^{2n+1}$, which is a diffeomorphism.
As $c \in \Lambda$ commutes with any $\gamma \in \Lambda_{\pm1}$,  $\bar{h}_m \circ \sigma(\gamma) (x) = \rho(\gamma) \circ \bar{h}_m(x)$ for $m \in \mathbb{Z}$ and $\gamma \in \Lambda_{\pm1}$.
Moreover, by the relation $aca^{-1} = c^{k^2}$, $\bar{h}_m \circ \sigma(a^{-1}) (x) = \rho(a^{-1}) \circ \bar{h}_{k^2m}(x)$.
So $\bar{h} \circ \sigma(\gamma) (x) = \rho(\gamma) \circ \bar{h}(x)$ for any $\gamma \in \Gamma$.
\end{proof}

To use Proposition $\ref{Main_1}$ for the proof of the main theorem, we will prove the persistence of the global fixed point $p^0$:
\begin{prop}\label{PGFP}
There is a neighborhood $\mathcal{U}$ of $\rho^0 \in \mathrm{Hom}(\Gamma, \mathrm{Diff}(S^{2n+1}))$ where $\mathrm{Diff}(S^{2n+1})$ is equipped with $C^1$-topology, and a continuous map $\varphi: \mathcal{U} \to S^{2n+1}$ such that $\varphi(\rho^0) = p^0$ and that $\varphi(\rho)$ is a global fixed point of $\rho \in \mathcal{U}$.
\end{prop}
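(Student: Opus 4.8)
The plan is to construct $\varphi(\rho)$ as the attracting fixed point of the single element $\rho(a)$ and then to upgrade this to a genuine \emph{global} fixed point. First I would persist the attractor. By Lemma \ref{std2}(i) the differential of $\rho^0(a)$ at $p^0$ is $I(k) = \mathrm{diag}(k^{-1}I_{2n}, k^{-2})$, whose eigenvalues $k^{-1}, k^{-2}$ all lie in the open unit disc; hence $p^0$ is a hyperbolic attracting fixed point of $\rho^0(a)$ and $I - I(k)$ is invertible. Applying the implicit function theorem to the map $x \mapsto \rho(a)(x) - x$ (read in the chart $\phi^0$) yields, for every $\rho$ in a sufficiently small $C^1$-neighborhood $\mathcal{U}$ of $\rho^0$, a unique fixed point $p = \varphi(\rho)$ of $\rho(a)$ near $p^0$, depending continuously on $\rho$ and satisfying $\varphi(\rho^0) = p^0$. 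The symmetric computation near $q^0$ (where, by Lemma \ref{std1}, $\rho^0(a)$ is linear with eigenvalues $k, k^2 > 1$) persists a repelling fixed point $q$ near $q^0$. Since $\{\gamma \in \Gamma \mid \rho(\gamma)p = p\}$ is a subgroup already containing $a$, it now suffices to prove that it contains $\Lambda$, i.e. that $\rho(b_i)p = p$ and $\rho(c)p = p$.

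The hard part is precisely this upgrade from ``$a$ fixes $p$'' to ``$\Gamma$ fixes $p$''. The naive idea of exploiting the relation $\rho(a)\rho(\gamma)\rho(a)^{-1} = \rho(\gamma)^{k}$ (for $\gamma = b_i$) is circular: evaluating $\rho(a)^n \rho(\gamma) \rho(a)^{-n} = \rho(\gamma)^{k^n}$ at the $\rho(a)$-fixed point $p$ merely gives the identity $\rho(\gamma)^{k^n}(p) = \rho(a)^n(\rho(\gamma)(p))$, which is automatically satisfied for \emph{any} value of $\rho(\gamma)(p)$ and hence conveys no information. The obstruction reflects that the requirement ``$p$ fixed by every generator'' is overdetermined; its persistence is a genuinely dynamical phenomenon, and I would extract it from an invariant measure.

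The key structural input is that $\rho^0(a)$ is a north--south (Morse--Smale) diffeomorphism: its only fixed points are the hyperbolic $p^0$ (attracting) and $q^0$ (repelling), and every other orbit runs from $q^0$ to $p^0$. By the classical structural stability of Morse--Smale diffeomorphisms, after shrinking $\mathcal{U}$ the perturbation $\rho(a)$ is topologically conjugate to $\rho^0(a)$; in particular its nonwandering set is exactly $\{p, q\}$, so every $\rho(a)$-invariant Borel probability measure has the form $\mu = s\delta_p + (1-s)\delta_q$ with $s \in [0,1]$. On the other hand $\Gamma = \mathbb{Z} \ltimes \Lambda$ is solvable, hence amenable, so the compact action $\rho$ admits a $\rho(\Gamma)$-invariant probability measure $\mu$; being $\rho(a)$-invariant, this $\mu$ has the above form.

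It remains to eliminate the mass at the repeller. Set $h = \rho(b_1)$ for a fixed nontrivial generator. By Lemma \ref{std1} the model map $\rho^0(b_1)$ acts on $S^{2n+1}\setminus\{p^0\}$ by a nontrivial affine translation (translation part $v_1 \neq 0$), so $\rho^0(b_1)(q^0) \neq q^0$; while $\rho^0(b_1)(p^0) = p^0$ since $p^0$ is the global fixed point of $\rho^0$. By continuity, for $\rho$ close to $\rho^0$ (so that $p \approx p^0$ and $q \approx q^0$) we obtain $h(p) \neq q$ and $h(q) \neq q$. Comparing the mass at the point $q$ in the identity $h_*\mu = \mu$, namely $1 - s = \mu(\{q\}) = h_*\mu(\{q\}) = s\,\mathbf{1}[h(p)=q] + (1-s)\,\mathbf{1}[h(q)=q] = 0$, forces $s = 1$. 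Thus $\mu = \delta_p$, and its $\Gamma$-invariance says exactly $\rho(\gamma)(p) = p$ for all $\gamma \in \Gamma$. Hence $\varphi(\rho) = p$ is the required global fixed point. The step I expect to be the main obstacle is this last upgrade, and the argument hinges on the three ingredients that make it go through: amenability of $\Gamma$ (to get an invariant measure), structural stability of the north--south map $\rho^0(a)$ (to confine that measure to $\{p,q\}$), and the fact that $\Lambda$ genuinely moves the repeller $q$ (to pin the measure down to $\delta_p$).
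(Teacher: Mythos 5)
Your proposal is correct, but the route you take for the decisive step --- upgrading the fixed point of $\rho(a)$ to a global fixed point --- is genuinely different from the paper's. The paper stays entirely local at $p^0$: it chooses a weighted norm $\|x\| = c_1\sum_{i\leq 2n}|x_i| + c_2|x_{2n+1}|$ adapted to the unipotent linear parts $A_\gamma$ of the generators, sets $y^0 = \rho(\gamma)(p) - p$ in the chart, and extracts from the single relation $F\circ G = G^{m_\gamma}\circ F$ (with $F = \rho(a)$, $G = \rho(\gamma)$, $F(p)=p$) the telescoping estimate
\[
\lambda\|y^0\| \;\geq\; \Bigl\|\sum_{j=0}^{m_\gamma-1}A_\gamma^j y^0\Bigr\| - \epsilon m_\gamma\|y^0\|,
\]
which forces $y^0=0$ because the norm was rigged so that $\|\sum_j A_\gamma^j y\| > (\epsilon m_\gamma + \lambda)\|y\|$ for $y\neq 0$. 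So the relation $a\gamma a^{-1}=\gamma^{m_\gamma}$ that you dismiss as ``circular'' is in fact the engine of the paper's proof --- not by iterating it and evaluating at the fixed point, as in the naive version you rightly reject, but by comparing first-order expansions of the two sides near $p$. Your argument is instead global and soft: amenability of $\Gamma$ produces an invariant probability measure, Poincar\'e recurrence together with the persistence of the north--south structure of $\rho(a)$ (via Palis--Smale structural stability, or more economically via a direct trapping-region argument around $p$ and $q$) confines that measure to $\{p,q\}$, and the fact that $\rho^0(b_1)$ moves $q^0$ (since $v_1\neq 0$ in Lemma \ref{std1}) kills the atom at $q$, leaving $\mu=\delta_p$. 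Each approach buys something: yours is conceptually cleaner and avoids all norm bookkeeping, but it leans on the compactness of $S^{2n+1}$, the existence and persistence of the repelling fixed point, and amenability of $\Gamma$; the paper's estimate is elementary and purely local at $p^0$, so it makes sense for germs and local actions and needs no information about the dynamics away from the attractor.
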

\begin{proof}
\personalnote{Consider the perturbation of the natural homomorphism from $\Gamma$ into $\mathrm{Aff}(\R^{2n+1})$.}
Fix constants $k^{-1} < \lambda < 1$, $0 < \epsilon < k^{-2}$.
Put $\Lambda_{+1} = \{ b_1, \dots, b_{2n}, c\}$.
For $\gamma \in \Lambda_{+1}$, define $m_\gamma \in \mathbb{Z}$ by $m_\gamma = k$ if $\gamma \neq c$ and $m_\gamma = k^2$ if $\gamma = c$ so that $a\gamma a^{-1} = \gamma^{m_\gamma}$.
Put $A_\gamma = d(\phi^0 \circ \rho^0(\gamma)\circ(\phi^0)^{-1})_\mathrm{O}$.
By Lemma \ref{std2}, 
\[
A_\gamma = \left(
\begin{array}{cc}
I_{2n}& u_\gamma\\
0&1
\end{array} \right)
\]
for some $ u_\gamma \in \R^{2n}$.
Let $\|\cdot\|$ be the norm on $\R^{2n+1}= \{x = (x_1, \dots, x_{2n+1}) \mid x_i \in\R\}$ defined by $\|x\| = c_1\sum_{i=1}^{2n}|x_i| + c_2 |x_{2n+1}|$, where $c_1, c_2 >0$ are constants satisfying
\[
-m_\gamma^2\|u_\gamma\|_1c_1 +((1 -\epsilon) m_\gamma-\lambda )c_2 > 0
\]
for $\gamma \in \Lambda_{+1}$, where $\|\cdot\|_1$ is the norm on $\R^{2n}= \{x' = (x'_1, \dots, x_{2n}') \mid x'_i \in \R\}$ defined by $\|x'\|_1 = \sum_{i=1}^{2n}|x'_i|$.
Note that $(1 -\epsilon)m_\gamma - \lambda > (1 - k^{-2})k -1 > 0$.
Then $\| \sum_{j =0}^{m_\gamma-1}A_\gamma^j \| > \epsilon m_\gamma +\lambda$ for $\gamma \in \Lambda_{+1}$, where $\| A \| = \max_{\|x\| = 1} \|Ax\|$ for $A \in GL(2n+1, \R)$.
\personalnote{$x = (x', y') \in \R^{2n}\times\R$.
Since $\sum_{j =0}^{m-1}A^jx= mx + y'(\sum_{j =0}^{m-1}j)(u_\gamma, 0)$,
\[
\|\sum_{j =0}^{m-1}A^jx\| = c_1\|mx' + y'(\sum_{j =0}^{m-1}j)u_\gamma\|_1 + c_2m|y'| \geq c_1m\|x'\|_1 - c_1m^2|y'|\|u_\gamma\|_1+c_2m|y'|
\]
\[
= c_1m\|x'\|_1 +(c_2m-c_1m^2\|u_\gamma\|_1)|y'| > c_1(\epsilon m + \lambda)\|x'\|_1 + c_2(\epsilon m + \lambda)|y'| = (\epsilon m + \lambda)\|x\|.
\]
}

There are neighborhood $\mathcal{U}_1$ of $\rho^0 \in \mathrm{Hom}(\Gamma, \mathrm{Diff}(S^{2n+1}))$ and neighborhoods $V_2 \subset V_1$ of $\phi^0(p^0) = \mathrm{O} \in \R^{2n+1}$ satisfying the following conditions:
\begin{enumerate}
\item For $\gamma \in \Lambda_{+1}$, $i =0, 1$, $j = 0, 1, \dots, m_\gamma$, $\phi^0 \circ \rho(a^i\gamma^j) \circ (\phi^0)^{-1}$ are well-defined on $V_1$.
\item $\| d(\phi^0 \circ \rho(a) \circ (\phi^0)^{-1})_x \| < \lambda$ and $\|d(\phi^0 \circ \rho(\gamma^j) \circ (\phi^0)^{-1})_x - A_\gamma^j \| < \epsilon$ for $x \in V_1$ and $j = 1, \dots, m_\gamma$.
\item For $\gamma \in \Lambda_{+1}$, $\phi^0 \circ \rho(\gamma) \circ (\phi^0)^{-1}(V_2) \subset V_1$.
\end{enumerate}
Using the persistence of a contracting fixed point of a diffeomorphism, we obtain a neighborhood $\mathcal{U}_2 \subset \mathcal{U}_1$ of $\rho^0 \in \mathrm{Hom}(\Gamma, \mathrm{Diff}(S^{2n+1}))$ and a continuous map $\varphi: \mathcal{U}_2 \to (\phi^0)^{-1}(V_2)$ such that $\varphi(\rho^0) = p^0$ and that $\varphi(\rho)$ is a contracting fixed point of $\rho(a)$.
Since $\rho^0(\gamma)(p^0) = p^0$ for $\gamma \in \Lambda_{+1}$, there is a neighborhood $\mathcal{U} \subset \mathcal{U}_2$ of $\rho^0 \in \mathrm{Hom}(\Gamma, \mathrm{Diff}(S^{2n+1}))$ such that $\rho(\gamma)(\varphi(\rho)) \in (\phi^0)^{-1}(V_2)$ for $\rho \in \mathcal{U}$.

We will show that $\varphi(\rho)$ is a global fixed point of $\rho$.
Fix $\rho \in \mathcal{U}$ and $\gamma \in \Lambda_{+1}$.
Put $m = m_\gamma$, $A = A_\gamma$, $x^0 = \phi^0(f(\rho))$, $F = \phi^0 \circ \rho(a) \circ (\phi^0)^{-1}$, $G = \phi^0 \circ \rho(\gamma) \circ (\phi^0)^{-1}$, and $y^0 = G(x^0) - x^0$.
It is sufficient to show that $y^0 = 0$.

By the definition of $\mathcal{U}$, $x^0 \in V_2$.
By the condition (iii),  $G(x^0) \in V_1$.
By the condition (ii),
\[
\| F(G(x^0)) - F(x^0)  \| \leq \lambda \| y^0 \|,
\]
\[
\| G^{j+1}(x^0) -G^j(x^0)\| \geq \|A^jy^0\| - \epsilon\|y^0\| \; (j = 0, \dots, m -1).
\]
So
\[
 \sum_{j = 0}^{m-1}\| G^{j+1}(x^0) -G^j(x^0)\| \geq \sum_{j = 0}^{m-1}\|A^jy^0\| - \epsilon m\|y^0\| .
 \]
Using $F \circ G = G^m \circ F$ and $F(x^0) = x^0$,
\[
F(G(x^0)) - F(x^0) = G^m(x^0) - x^0 =  \sum_{j = 0}^{m-1}\left(G^{j+1}(x^0) -G^j(x^0)\right).
\]
It follows that
\begin{align*}
\lambda \| y^0 \| &\geq \| F(G(x^0)) - F(x^0)  \|\\
 &= \left\|\sum_{j = 0}^{m-1}\left(G^{j+1}(x^0) -G^j(x^0)\right) \right\| \geq \left\|\sum_{j =0}^{m-1}A^j y^0\right\| - \epsilon m\|y^0\|.
\end{align*}
Since $\| \sum_{j =0}^{m-1}A^j \| > \epsilon m +\lambda$, we obtain $\|y^0\| = 0$.
\end{proof}

Now we can prove Theorem \ref{MainThm} assuming Proposition \ref{Main_1}.
\begin{proof}[Proof of Theorem \ref{MainThm} from Proposition \ref{Main_1}]
Let $\rho \in \mathrm{Hom}(\Gamma, \mathrm{Diff}(S^{2n+1}))$ be an action close to $\rho^0 \in \mathrm{Hom}(\Gamma, \mathrm{Diff}(S^{2n+1}, \{p^0\}))$.
By Proposition \ref{PGFP}, we may assume $p^0$ is a global fixed point of $\rho$.
Let $\phi^0_*\rho \in \mathrm{Hom}(\Gamma, \mathcal{G})$ be the homomorphism induced by the chart $\phi^0$ around $p^0$.

If $\rho$ and $\rho^0$ are close as elements of $\mathrm{Hom}(\Gamma, \mathrm{Diff}(S^{2n+1}, p^0))$, then the induced homomorphisms $\phi^0_*\rho$ and $\phi^0_*\rho^0$ in $\mathrm{Hom}(\Gamma, \mathcal{G})$ are also close.
By Proposition $\ref{Main_1}$, we obtain a conjugacy between $\phi^0_*\rho$ and $\phi^0_*\sigma$ with $\sigma$ being a standard action of $\Gamma$ on $S^{2n+1}$.
Using Proposition $\ref{LtoG}$, we see that $\rho, \sigma \in \mathrm{Hom}(\Gamma, \mathrm{Diff}(S^{2n+1}, p^0))$ are conjugate.
\end{proof}

\section{Local rigidity of the local actions}\label{LtoF}
By the homomorphism $D_\mathrm{O}:\mathcal{G} \to \mathcal{F}$ defined by the Taylor expansions, the standard homomorphisms in $\mathrm{Hom}(\Gamma, \mathcal{G})$ induce elements in $\mathrm{Hom}(\Gamma, \mathcal{F})$, which will also be called the \textit{standard homomorphisms} from $\Gamma$ into $\mathcal{F}$.
The goal of this section is to show that Proposition $\ref{Main_1}$ can be reduced to the following proposition:
\begin{prop}\label{Main_2}
The family of standard homomorphisms in $\mathrm{Hom}(\Gamma, \mathcal{F})$ is locally rigid, where $\mathcal{F}$ is equipped with $C^3$-topology.
\end{prop}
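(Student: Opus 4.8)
The plan is to convert the infinite‑dimensional problem in $\F$ into a finite‑dimensional one by exploiting the weight grading $\F=\prod_{r\ge-1}\F^{(r)}$ defined by $I(k)$ together with the renormalization relations $ab_ia^{-1}=b_i^{k}$, $aca^{-1}=c^{k^2}$. As in the reductions already made it suffices to show that any $\bar\rho\in\mathrm{Hom}(\Gamma,\F)$ that is $C^3$-close to a fixed standard homomorphism $\sigma$ is conjugate in $\F$ to a standard homomorphism. The first step is to normalize the cyclic generator: since $\bar\rho(a)$ is $C^1$-close to $I(k)$, Proposition \ref{St2} conjugates the whole homomorphism by a single $H\in\F$ so that $\alpha:=\bar\rho(a)\in\P_0$. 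Thus $\alpha$ is weighted‑homogeneous of degree $0$, and in particular it preserves the decomposition $\F^{(r)}$ and the induced grading $\mathfrak X(\R^{2n+1})=\prod_w\mathfrak X_w$ of formal vector fields.

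Next I would linearize the nilpotent part. Because $\bar\rho(b_i)$ and $\bar\rho(c)$ have unipotent linear parts (they are $C^3$-close to the unipotent matrices of Lemma \ref{std2}), their formal logarithms $V_i=\log\bar\rho(b_i)$ and $W=\log\bar\rho(c)$ are well‑defined formal vector fields vanishing at $\mathrm O$, and the relations of $\Gamma$ become
\[
[V_i,V_j]=m_{ij}W,\qquad W\ \text{central},\qquad \alpha_*V_i=kV_i,\qquad \alpha_*W=k^2W.
\]
The heart of the argument is the eigenvalue equation $\alpha_*V_i=kV_i$. Since $\alpha\in\P_0$ preserves each $\mathfrak X_w$ and its nonlinear part acts unipotently on the finite‑dimensional space $\mathfrak X_w$, the eigenvalues of $\alpha_*$ on $\mathfrak X_w$ are exactly those of the push‑forward by $(d\alpha)_{\mathrm O}$. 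A direct computation of this spectrum shows that the only monomial fields that are eigenvectors of eigenvalue exactly $k$ for every $\alpha$ near $I(k)$ are the linear ones $x_{2n+1}\partial_j$, while the nonlinear weight‑$1$ eigenvectors appear only when the $(2n+1,2n+1)$-entry of $(d\alpha)_{\mathrm O}$ equals $k^{-2}$. As $V_i$ must remain $C^3$-close to the genuinely nonlinear standard field $\log\sigma(b_i)$, this forces $(d\alpha)_{\mathrm O}=I(k)$ exactly; then $\alpha_*$ acts as the scalar $k^{w}$ on $\mathfrak X_w$, and $V_i\in\mathfrak X_1$, $W\in\mathfrak X_2$ become weighted‑homogeneous polynomial fields. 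Since $\mathfrak X_1$ consists of fields of ordinary degree $\le 3$, all of this data is read off the $3$-jet — exactly what the $C^3$-topology controls — and $W=m_{ij}^{-1}[V_i,V_j]$ is determined by the $V_i$. A final short computation, using that $\{u_1,\dots,u_{2n}\}$ is a basis, kills the residual resonant quadratic tail of $\alpha$, so after one more conjugation $\bar\rho(a)=I(k)$.

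It then remains to treat the finite‑dimensional data $\{V_1,\dots,V_{2n}\}\subset\mathfrak X_1$ subject to $[V_i,V_j]=m_{ij}W$. Here I would invoke the Heisenberg‑space formalism of Section \ref{Heis}: the fields $V_i,W$ span a copy of $\n$, and $I(k)$ (equivalently its generator $E$) is a dilation in the sense of Lemma \ref{Hdil}, so the pair consisting of this Heisenberg connection and its dilation can be put in normal form by an element of $\P_0\cap GL(2n+1,\R)=GL(2n,\R)\times GL(1,\R)$ commuting with the grading. The goal is to identify the moduli of normalized, $E$-homogeneous Heisenberg realizations with the family of standard homomorphisms: the admissible structure constants lie in the set $\mathcal M$, and the remaining continuous parameters match the choice of basis $\{u_i\}$ and the translation data describing the embeddings $\iota$. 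An implicit‑function/openness argument at this finite‑dimensional level then shows that the normalized tuple, being close to the one coming from $\sigma$, is conjugate by an element of $GL(2n,\R)\times GL(1,\R)$ to the tuple of some standard homomorphism, completing the proof.

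The main obstacle is the step where resonance is defeated. Proposition \ref{St2} by itself only normalizes $\bar\rho(a)$ into $\P_0$ and cannot remove the resonant quadratic terms, so a priori $\bar\rho(a)$ need not be conjugate to the linear map $I(k)$ that every standard homomorphism assigns to $a$. What rescues the argument is that the renormalization relations with the nilpotent generators over‑determine the normalization, forcing both $(d\alpha)_{\mathrm O}=I(k)$ and the vanishing of its quadratic tail; making this rigidification precise, and then verifying that the finite‑dimensional classification of weighted‑homogeneous Heisenberg realizations yields no moduli beyond the standard family, is where the real work lies.
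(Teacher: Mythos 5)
Your skeleton tracks the paper's actual proof fairly closely (normalize $\bar\rho(a)$ into $\P_0$ via Proposition \ref{St2}, reduce to weighted-homogeneous finite-dimensional data, use the Heisenberg-connection/dilation formalism of Section \ref{Heis}, then classify the nilpotent part), and one of your observations is genuinely clean: once $\bar\rho(a)=I(k)$ exactly, the eigen-equations $\alpha_*V_i=kV_i$, $\alpha_*W=k^2W$ force $V_i\in\p^{(1)}$, $W\in\p^{(2)}$, which is the same mechanism as the paper's Lemma \ref{Main2_1}. But there are two genuine gaps. First, your logarithm step rests on a non sequitur: being $C^3$-close to a unipotent matrix does not make the linear part of $\bar\rho(b_i)$ unipotent, so the formal logarithms $V_i=\log\bar\rho(b_i)$, $W=\log\bar\rho(c)$ are not yet defined. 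The paper must prove, as a separate rigidity statement in Subsection \ref{exthomtoLie}, that the weight-zero part of $f(b_i)$ is \emph{exactly} the identity; this is done by applying Weil's implicit function theorem to the relation $ghg^{-1}=h^k$ in $\P_0$, using $\mathrm{Ad}(\pi_rR^0(a))=\mathrm{id}$ and $k\geq 2$. You need this (or the equivalent $\mathrm{Ad}$-eigenvalue argument) before taking logs; moreover, the clean relations $[V_i,V_j]=m_{ij}W$ do not follow immediately from the group relations — Baker--Campbell--Hausdorff produces higher-weight corrections until homogeneity is established, a truncation issue the paper handles explicitly via Raghunathan's extension theorem and Lemma \ref{Hconnconstruct} (note its remark that $\bar f_*:\n\to\mathfrak{X}(\R^{2n+1},\mathrm{O})$ is \emph{not} automatically a Lie algebra homomorphism).

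Second, and more seriously, your closing step — ``an implicit-function/openness argument at this finite-dimensional level then shows that the normalized tuple is conjugate to the tuple of some standard homomorphism'' — is precisely the content of Propositions \ref{persistnilpprop} and \ref{nplushom}, which constitute the bulk of the paper's technical work, not a routine application of the implicit function theorem. The paper must construct the operator $\Theta$ (Lemma \ref{Theta}) transporting a graded realization of $\n^+$ to one of $\n^-$, prove rigidity on the $\n^-$ side by integrating the Heisenberg connection (Proposition \ref{nminushom}), reduce everything to the rigidity of a single element $X\in\p^{(+2)}$ near $\iota^0F^+$, and then verify that the linearized equations admit only the zero solution via a delicate computation with the symmetric multilinear maps $\Phi:(\mathbb{C}^n)^4\to\p^{(-2)}$ and $\Psi:(\mathbb{C}^n)^3\to\p^{(-1)}$, obtained by repeatedly applying $\mathrm{ad}(\iota^0\xi^-)$ to the relations. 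Asserting openness does not discharge this; the surjectivity/vanishing condition needed by the implicit function theorem is exactly what that computation establishes. (Two smaller points of the same nature: the conjugation must be allowed to range over all of $\P_0$, not merely $GL(2n,\R)\times GL(1,\R)$, since $\P_0$ contains nonlinear weight-zero terms; and your claim that a ``final short computation kills the residual resonant quadratic tail of $\alpha$'' is the paper's Proposition \ref{persistcyclprop}, whose proof again runs through centralizers and dilations of Heisenberg connections rather than a spectrum count on linear parts alone.)
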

\begin{proof}[Proof of Proposition \ref{Main_1} from Proposition \ref{Main_2}]
To reduce the notation, the homomorphism in $\mathrm{Hom}(\Gamma, \mathcal{G})$ induced by $\rho^0 \in \mathrm{Hom}(\Gamma, \mathrm{Diff}(S^{2n+1}))$ will also be denoted by $\rho^0$.
It is sufficient to show that homomorphisms sufficiently close to $\rho^0 \in \mathrm{Hom}(\Gamma, \mathcal{G})$ are conjugate to standard ones.

Assume $\rho \in \mathrm{Hom}(\Gamma, \mathcal{G})$ is sufficiently close to $\rho^0$.
As $D_\mathrm{O}\rho \in \mathrm{Hom}(\Gamma, \mathcal{F})$ is also close to $D_\mathrm{O}\rho^0$, by Proposition $\ref{Main_2}$, there are a standard homomorphism $D_\mathrm{O}\sigma \in \mathrm{Hom}(\Gamma, \F)\; (\sigma \in \mathrm{Hom}(\Gamma, \mathcal{G}))$ and a formal transformation $D_\mathrm{O}h\in\mathcal{F}\; (h \in \mathcal{G})$ satisfying
\begin{equation}\label{conjG}
D_\mathrm{O}\rho^0(\gamma) = D_\mathrm{O}h(D_\mathrm{O}\sigma)(\gamma)(D_\mathrm{O}h)^{-1}
\end{equation}
for all $\gamma \in \Gamma$.
In particular,
\[
D_0\rho(a) = D_0(h\sigma(a)h^{-1}).
\]
So, by Proposition $\ref{St1}$, there exists $h' \in \mathcal{G}$ satisfying $D_0h' = I \in \mathcal{F}$ and
\[
\rho(a) = h'h\sigma(a)h^{-1}h'^{-1}.
\]
Thus, replacing $h'h$ by $h$, we may assume the equation (\ref{conjG}) and $\rho(a) = h\sigma(a)h^{-1}$.
By Lemma $\ref{2_to_1}$ below, for $\gamma = b_1, \dots, b_{2n}, c$,
\[
\rho(\gamma) = h\sigma(\gamma) h^{-1}.
\]
So this equation holds for any $\gamma \in \Gamma$.
\end{proof}

\begin{lem}\label{2_to_1}
Let $m \geq 2$ be an integer, and $f \in \mathcal{G}= \mathcal{G}(\R^{2n+1}, \mathrm{O})$ the germ of diffeomorphism defined by the matrix
\[
\left(\begin{array}{cc}
\frac{1}{k}I_{2n}&0\\
0&\frac{1}{k^2}
\end{array}\right) \in GL(2n+1, \R).
\]
If $g_1, g_2 \in \mathcal{G}$ satisfy $D_\mathrm{O}g_1 = D_\mathrm{O}g_2$, $fg_i = g_i^mf$ $(i =1, 2)$, and
\[
(dg_1)_\mathrm{O} = (dg_2)_\mathrm{O} = I + \left(
\begin{array}{cc}
0 & u \\
0 & 0
\end{array}
\right) \in GL(2n+1, \R),
\]
for some $u \in \R^{2n}$, then $g_1 = g_2$.
\end{lem}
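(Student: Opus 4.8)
The plan is to reduce the claim to showing that the germ $h = g_2^{-1}\circ g_1$ is the identity germ. Since $D_\mathrm{O}g_1 = D_\mathrm{O}g_2$ forces $D_\mathrm{O}h = 1$, the germ $h$ is tangent to the identity to infinite order at $\mathrm{O}$, so I fix representatives of $f,g_1,g_2$ on a small ball $B_\delta$ and aim to prove $h = \mathrm{id}$ there. Writing $g_1 = g_2\circ h$ and substituting into $fg_1 = g_1^m f$, then eliminating $fg_2$ by means of $fg_2 = g_2^m f$, I obtain the single relation $g_2^m\,(fhf^{-1}) = (g_2 h)^m$. An elementary rearrangement $(g_2h)^m = g_2^m\,(g_2^{-(m-1)}h\,g_2^{m-1})\cdots(g_2^{-1}h\,g_2)\,h$ then yields
\[
f h f^{-1} = (g_2^{-(m-1)} h\, g_2^{m-1})\cdots (g_2^{-1} h\, g_2)\, h ,
\]
expressing $h$ conjugated by the contraction $f$ as a product of $m$ conjugates of $h$ by powers of $g_2$; every factor is again tangent to the identity to infinite order.

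Next I would pass to displacements. Writing $h = \mathrm{id} + \eta$ with $\eta$ flat at $\mathrm{O}$, and using that near $\mathrm{O}$ the germ $f$ is $C^1$-close to its linear part $I(k)$ while $g_2$ is $C^1$-close to its linear part $A = I + \left(\begin{smallmatrix}0 & u\\ 0 & 0\end{smallmatrix}\right)$ (Lemma \ref{std2}), the relation above becomes, to leading order,
\[
\eta(x) = I(k)^{-1}\sum_{l=0}^{m-1} A^{-l}\,\eta\!\left(A^l I(k)\, x\right) + (\text{terms of higher order in } \eta),
\]
the error being either quadratic in $\eta$, hence even flatter, or a coefficient correction of size $O(\delta)$ on $B_\delta$. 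The goal is to bootstrap the flatness of $\eta$ off this fixed-point equation and conclude $\eta\equiv 0$.

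The decisive point, and the step I expect to be the main obstacle, is the choice of norm. In the Euclidean norm the prefactor $I(k)^{-1}$ has norm $k^2>1$, and a naive estimate loses: one finds a putative gain $k^{2-p}m^{p+2}$ which fails to be $<1$ precisely because $m=k$ or $m=k^2$. The remedy is the anisotropic weighting already underlying the spaces $\mathcal{H}^{(r)},\mathcal{F}^{(r)}$ of Section \ref{Stb}, giving weight $1$ to $x_1,\dots,x_{2n}$ and weight $2$ to $x_{2n+1}$: with $|x|_w = \max\{|x_1|,\dots,|x_{2n}|,|x_{2n+1}|^{1/2}\}$ one has the clean contraction $|I(k)x|_w = k^{-1}|x|_w$, and the unipotent drift $A^l$ for $0\le l\le m-1$ stays bounded in this norm on $B_\delta$. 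This boundedness is exactly where the special form of $A$ is used, equivalently the linear identity $I(k)A I(k)^{-1}=A^m$ that already governs Lemma \ref{St2_1}. Consequently each argument is weighted-contracted, $|A^l I(k)x|_w \le c\,k^{-1}|x|_w$ with $c$ close to $1$.

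Finally I would close by a contraction estimate on flatness constants. For each $p$ put $C_p = \inf\{C : \|\eta(x)\| \le C\,|x|_w^{\,p}\text{ on } B_\delta\}$, which is finite because $\eta$ is flat. Substituting the weighted contraction into the functional equation gives $\|\eta(x)\| \le \theta_p\,C_p\,|x|_w^{\,p}$ with $\theta_p = c^{\,p}k^{2-p}m^2$, so that $C_p \le \theta_p C_p$. Since $\theta_p = (c/k)^p k^2 m^2 \to 0$ as $p\to\infty$ (using $c<k$), I may fix $p$ with $\theta_p<1$, whence $C_p = 0$, so $\eta\equiv 0$ on $B_\delta$ and $g_1 = g_2$. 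The higher-order-in-$\eta$ and $O(\delta)$ corrections are absorbed by first choosing $p$ and then shrinking $\delta$, so the only genuinely delicate input is the weighted estimate of the previous paragraph.
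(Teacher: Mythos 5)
Your overall architecture coincides with the paper's: a flatness supremum $\sup\|\cdot\|/|x|^{p}$ (finite because the jets agree), the relation $g_i=f^{-1}g_i^{m}f$ used to express the difference at $x$ through $m$ conjugates of it evaluated at contracted points, an anisotropic norm to tame the unipotent linear part $A$, and a final gain of order $m^{O(1)}k^{2}(c/k)^{p}<1$ for $p$ large. (The paper works directly with $\Delta=\sup_{v}\|\tilde g_1(v)-\tilde g_2(v)\|/\|v\|^{r_0}$ and telescopes $\tilde g_1^{m}-\tilde g_2^{m}$ rather than forming $h=g_2^{-1}g_1$ and a fixed-point equation for its displacement; that difference is cosmetic.)

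The genuine gap is at the step you yourself flag as decisive. The gauge $|x|_w=\max\{|x_1|,\dots,|x_{2n}|,|x_{2n+1}|^{1/2}\}$ is not homogeneous, and for a non-homogeneous gauge the $C^1$-closeness of $g_2$ to its linear part $A$ on $B_\delta$ does \emph{not} yield $|g_2^{l}(y)|_w\le c\,|y|_w$ with $c$ close to $1$ after shrinking $\delta$. Indeed $(g_2(y))_{2n+1}=y_{2n+1}+Q(y_1,\dots,y_{2n})+O(|y|_w^{3})$ with $Q$ the quadratic Taylor coefficient of the last component; since $Q(y_1,\dots,y_{2n})$ is of order $|y|_w^{2}$ --- exactly the order of $y_{2n+1}$ itself --- it produces a constant multiplicative loss $(1+\mathrm{const}\cdot\|Q\|)^{1/2}$ in the gauge of the last coordinate, which neither tends to $1$ as $\delta\to0$ nor is a priori smaller than $k$. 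So the contraction $|g_2^{l}(f(x))|_w\le c k^{-1}|x|_w$ does not follow from the inputs you cite, and this error is not ``quadratic in $\eta$'' nor an ``$O(\delta)$ coefficient correction.'' The argument can be rescued: applying the degree-$0$ projection $\pi_0$ to $fg_2=g_2^{m}f$ forces $Q=mQ$, hence $Q=0$, after which the remainder of the last component really is $O(|y|_w^{3})=O(\delta)|y|_w^{2}$; but that jet computation is an additional necessary step absent from your proposal. The paper sidesteps the issue by using a genuine linear norm $\|x\|=c_1\sum_{i\le 2n}|x_i|+c_2|x_{2n+1}|$ with $c_2/c_1$ large enough that $\|A^{l}\|<c<k$ for all $l\le m$: for a linear norm, shrinking the ball does bring the Lipschitz constants of the nonlinear representatives within any prescribed margin of the operator norm of the differential, so nothing beyond first-order data is needed.
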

\begin{proof}
Fix constants $r_0 \geq 1$ and  $1 < c < k$ satisfying $mc^{r_0 + 1} < k^{r_0 -2}$.
Let $\|\cdot\|$ be the norm on $\R^{2n+1}$ defined by $\|x\| = c_1\sum_{i=1}^{2n}|x_i| + c_2 |x_{2n+1}|$, where $c_1, c_2 >0$ are constants satisfying
\[
c_1(m+1)\|u\|_1 +c_2(1 -c) < 0,
\]
where $\|\cdot\|_1$ is the norm on $\R^{2n}$ defined by $\|x\|_1 = \sum_{i=1}^{2n}|x_i|$.
Then it is easy to see that $\|(dg_1)_\mathrm{O}^i(v)\|, \|(dg_2)_\mathrm{O}(dg_1)_\mathrm{O}^i(v)\| < c\|v\|$ for all $v \in \R^{2n+1}$ and $i = 0, \dots, m$.
\personalnote{$v = (x, y) \in \R^{2n}\times \R$.
When $Av = v + y(w, 0)$ for some $w \in \R^{2n}$,
\[
\|Av\| = c_1\|x\|_1 + (c_2 + c_1\|w\|_1)|y|< cc_1\|x\|_1 + cc_2|y| = c\|v\|
\]
if $c_1\|w\|_1 +c_2(1 -c) < 0$. }
Then there exist representatives of germs $\tilde{g_i} \in g_i, \tilde{f}\in f$ and a constant $R_1 > 0$ satisfying the following condition:
\begin{itemize}
\item $\tilde{g_2}^i\tilde{g_1}^j$ is well-defined on $B_{R_1}$ for $i,j = 1, \dots, m$,
\item $\tilde{f}\tilde{g_i} = \tilde{g_i}^m\tilde{f}$ on $B_{R_1}$,
\item $\| \tilde{g_1}^i(v) \|, \|\tilde{g_2}\tilde{g_1}^i(v)\| \leq c \|v\|$, $\| \tilde{g_2}(v) - \tilde{g_2}(v') \| \leq c \| v - v' \|$ for $v, v' \in B_{R_1}$, $i = 0, \dots, m$,
\end{itemize}
where $B_{R_1} \subset \R^{2n+1}$ is the ball of radius $R_1$ with respect to  $\| \cdot \|$ centered at $\mathrm{O} \in \R^{2n+1}$.
Fix a constant $0 < R_2 < R_1$ with
\[
\tilde{g_1}^i\tilde{f}(v), \tilde{g_2}\tilde{g_1}^i\tilde{f}(v) \in B_{R_1} \; ( v \in B_{R_2}, i = 0, \dots, m).
\]

Since $D_\mathrm{O}g_1 = D_\mathrm{O}g_2$,
\[
\Delta = \sup_{v \in B_{R_2}} \frac{\| \tilde{g_1}(v) - \tilde{g_2}(v) \|}{\| v \|^{r_0}}
\]
is a finite number.
It is sufficient to show that $\Delta = 0$.
For any $v \in B_{R_2}$,
\begin{align*}
\| \tilde{g_1}(v) - \tilde{g_2}(v) \| &= \| \tilde{f}^{-1}\tilde{g_1}^m\tilde{f}(v) - \tilde{f}^{-1}\tilde{g_2}^m\tilde{f}^{-1}(v) \| \\
&\leq k^2 \|\tilde{g_1}^m\tilde{f}(v) - \tilde{g_2}^m\tilde{f}(v) \| \\
&\leq k^2\sum_{i = 1}^m \|\tilde{g_2}^{m - i} \tilde{g_1}^i \tilde{f} (v) - \tilde{g_2}^{m - i + 1}\tilde{g_1}^{m -1}\tilde{f} (v)\| \\
&\leq k^2c\sum_{i = 1}^m \|\tilde{g_1}^i \tilde{f} (v) - \tilde{g_2}\tilde{g_1}^{i -1}\tilde{f} (v)\| \\
&\leq k^2c\Delta \sum_{i = 1}^m \|\tilde{g_1}^i \tilde{f} (v)\|^{r_0} \\
&\leq k^2c\Delta m \left( c\frac{1}{k}\|v\|\right)^{r_0} \\
&= \Delta \frac{mc^{r_0 +1}}{k^{r_0 - 2}} \|v\|^{r_0}.
\end{align*}
It follows that
\[
\frac{\| \tilde{g_1}(v) - \tilde{g_2}(v) \|}{\| v \|^{r_0}} \leq \frac{mc^{r_0 +1}}{k^{r_0 - 2}}\Delta.
\]
Taking the supremum for $v \in B_{R_2}$,
\[
\Delta \leq \frac{mc^{r_0 +1}}{k^{r_0 - 2}}\Delta.
\]
As $\frac{mc^{r_0 +1}}{k^{r_0 - 2}} < 1$, we have $\Delta = 0$.
\end{proof}

\section{Local rigidity of homomorphisms into the group of formal transformations}\label{F}
In this section, we will give a proof of Proposition \ref{Main_2}.
First, we will show that, while the group $\F$ is an infinite-dimensional Lie group, Proposition \ref{Main_2} can be reduced to a problem (Proposition \ref{Main_3}) of homomorphisms into a finite-dimensional Lie group $\P_r$.
The finite-dimensional Lie group $\P_r $ will be defined in the next subsection.
Next, we will show that the proof of Proposition \ref{Main_3} can be divided into three steps.
Each steps will be discussed in subsections \ref{exthomtoLie}, \ref{persistcycl}, and \ref{persistnilp}, respectively.

\subsection{The group of jets of diffeomorphisms at a point}\label{jets}
The group $\P_r\;(r \geq 0)$ is defined as an analogy of the group $J_r(\R^m, \mathrm{O})$ of $(r+1)$-th jets of the diffeomorphisms defined around $\mathrm{O} \in\R^m$ that fix $\mathrm{O} \in \R^m$.
Note that $J_0(\R^m, \mathrm{O}) = GL(m, \R)$ and that the Lie group $\P_0$ was already defined in Section \ref{Stb}
To motivate our definition of the group $\P_r\;(r \geq 0)$, we begin with an observation on $J_r(\R^m, \mathrm{O})$.
Let $\mathfrak{X}(\R^m)$ be the Lie algebra of the smooth vector fields on $\R^m$, and $\mathrm{Poly}(\R^m, \mathrm{O}) \subset \mathfrak{X}(\R^m)$ the subalgebra of the polynomial vector fields that vanish at $\mathrm{O} \in \R^m$.
We will observe that $J_r(\R^m, \mathrm{O})$ arises naturally from a gradation on $\mathrm{Poly}(\R^m, \mathrm{O})$.

Let $\mathrm{Poly}^{(r)}(\R^m, \mathrm{O}) \subset \mathrm{Poly}(\R^m, \mathrm{O})$ be the subspace of vector fields whose coefficients are homogeneous polynomials of degree $r+1$.
The Lie algebra $\mathrm{Poly}(\R^m, \mathrm{O})$ has a \textit{gradation}
\[
\mathrm{Poly}(\R^m, \mathrm{O}) = \bigoplus_{r\geq0}\mathrm{Poly}^{(r)}(\R^m, \mathrm{O}),
\]
in the sense that $[\mathrm{Poly}^{(r)}(\R^m, \mathrm{O}), \mathrm{Poly}^{(r')}(\R^m, \mathrm{O})] \subset \mathrm{Poly}^{(r + r')}(\R^m, \mathrm{O})$.
Note that $\mathrm{Poly}^{(0)}(\R^m, \mathrm{O}) = \mathfrak{gl}(m, \R)$.
For $r \geq 0$, the subspace
\[
\mathfrak{j}_r = \bigoplus_{0\leq q \leq r}\mathrm{Poly}^{(q)}(\R^m, \mathrm{O})
\]
can be equipped with the Lie bracket $[\,\cdot, \cdot\,]_{\mathfrak{j}_r}$ defined by the following condition:
For $X \in \mathrm{Poly}^{(p)}(\R^m, \mathrm{O})$, $Y \in \mathrm{Poly}^{(q)}(\R^m, \mathrm{O})$,
\[
[X, Y]_{\mathfrak{j}_r} = 
\begin{cases}
[X, Y]_{\mathfrak{X}(\R^m)} & (\text{if }p +q \leq r) \\
0 & (\text{otherwise})
\end{cases}.
\]
Note that $\mathfrak{j}_r \subset \mathrm{Poly}(\R^m, \mathrm{O})$, $r \geq 1$ is not a subalgebra.
Let
\[
\mathfrak{n}_r = \bigoplus_{1\leq q \leq r}\mathrm{Poly}^{(q)}(\R^m, \mathrm{O}) \subset \mathfrak{j}_r
\]
be a nilpotent Lie subalgebra of $\mathfrak{j}_r$, and $N_r$ the connected simply-connected nilpotent Lie group with its Lie algebra $\mathfrak{n}_r$. 
The group $GL(m, \R)$ acts naturally on $\mathfrak{n}_r$ by $GL(m, \R)\times \mathfrak{n}_r \to \mathfrak{n}_r, \;(g, X) \mapsto g_*X$, where $g_*X$ is the push-forward of $X \in \mathfrak{X}(\R^m)$ by the diffeomorphism $g\in GL(m, \R) \subset \mathrm{Diff}(\R^m)$.
Then the diffeomorphism $\exp:\mathfrak{n}_r \to N_r$ induces an action of $GL(m, \R)$ on $N_r$.
The semidirect product $GL(m, \R) \ltimes N_r$ is the group $J_r(\R^m, \mathrm{O})$ with its Lie algebra $\mathfrak{j}_r$.

To define the group $\P_r = \P_r(\R^{2n+1}, \mathrm{O})$, we consider a certain gradation of $\mathrm{Poly}(\R^{2n+1}, \mathrm{O})$ associated with the matrix
\[
\phi^0_*(\rho^0(a)) =
\left( \begin{array}{cc}
\frac{1}{k}I_{2n}&0\\
0&\frac{1}{k^2}
\end{array} \right) \in GL(2n+1, \R).
\]
In fact,
\[
\mathrm{Poly}(\R^{2n+1}, \mathrm{O}) = \bigoplus_{r \geq -1} \p^{(r)}(\R^{2n+1}, \mathrm{O})
\]
is a gradation, where
\[
\p^{(r)}(\R^{2n+1}, \mathrm{O})= \{X \in \mathrm{Poly}(\R^{2n+1}, \mathrm{O})  \mid \phi^0_*(\rho^0(a))_*X = k^rX \}.
\]
For $r \geq 0$, in the same way as $\mathfrak{j}_r$, the subspace
\[
\p_r = \bigoplus_{0 \leq q \leq r} \p^{(q)}(\R^{2n+1}, \mathrm{O})
\]
can be equipped with a Lie bracket.
Note that the group $\P_0\subset \mathrm{Diff}(\R^{2n+1}, \mathrm{O})$ defined in Section \ref{Stb} is a Lie group with its Lie algebra $\p_0\subset \mathfrak{X}(\R^{2n+1}, \mathrm{O})$.
Let $\mathcal{Q}_r$ be the connected simply-connected nilpotent Lie group with its Lie algebra 
\[
\mathfrak{q}_r = \bigoplus_{1 \leq q \leq r} \p^{(q)},
\]
on which $\P_0$ acts naturally in the same way as that of $GL(m, \R)$ on $N_r$.
Finally, we obtain  the semidirect product $\P_0 \ltimes \mathcal{Q}_r$, which will be denoted by $\P_r$.

Recall that each $F \in \F$ admits a decomposition $F(x) = \sum_{r\geq -1} F^{(r)}(x)$ as in Section \ref{Stb}.
It is straightforward to see that there is a natural identification of $\P_r$ with the subspace
\[
\{F \in \F \mid F^{(q)} = 0 \text{ for } q = -1,\; q \geq r +1\} \subset \F.
\]
There are natural surjective group homomorphisms $\mathcal{F} \to \P_r$ and $\P_q \to \P_r, \,(q > r)$ defined by forgetting higher-order terms.
By abuse of notation, such homomorphisms will be denoted by  $\pi_r:\mathcal{F} \to \P_r$, $\pi_r:\P_q \to \P_r$.

To reduce the notation, the homomorphism $D_\mathrm{O}\phi^0_*\rho^0 \in \mathrm{Hom}(\Gamma, \F)$ induced by the action $\rho^0 \in \mathrm{Hom}(\Gamma, \mathrm{Diff}(S^{2n+1}))$ will be denoted by $R^0$.
We will show that Proposition \ref{Main_2} follows from the next proposition, which claims that the family of the standard homomorphisms in $\mathrm{Hom}(\Gamma, \P_r)$ $(r \geq 2)$ is locally rigid if $\P_r$ is equipped with the topology induced from $\P_1$:
\begin{prop}\label{Main_3}
For $f \in \mathrm{Hom}(\Gamma, \P_r)$, $r \geq 2$ with $\pi_1f \in \mathrm{Hom}(\Gamma, \P_1) $ sufficiently close to $\pi_1R^0$, there exists a standard homomorphism $S \in \mathrm{Hom}(\Gamma, \F)$ such that $\pi_rf, \pi_rS \in \mathrm{Hom}(\Gamma, \P_r)$ are conjugate.
\end{prop}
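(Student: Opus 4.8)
The plan is to recover from $f$ the Lie-theoretic data of a standard homomorphism --- a Heisenberg connection together with a compatible dilation --- and then to conjugate this data, inside $\P_r$, to the standard model. I would begin with the cyclic generator. By Lemma \ref{std2} the standard value $R^0(a)=I(k)$ is the linear contraction, so the hypothesis that $\pi_1 f$ is close to $\pi_1 R^0$ makes the linear part of $f(a)\in\P_r$ close to $I(k)$. Applying Proposition \ref{St2} in the finite-jet group $\P_r$ yields $h\in\P_r$ with $h f(a) h^{-1}$ equal to the weight-zero part $f(a)^{(0)}\in\P_0$; replacing $f$ by $hfh^{-1}$, I may assume $L:=f(a)\in\P_0$. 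Since $L\in\P_0$ preserves the grading (i.e. $L_*$ preserves each $\p^{(q)}$) and is close to $I(k)$, the operator $\mathrm{Ad}(L)$ is block-diagonal with respect to $\mathrm{Poly}(\R^{2n+1},\mathrm{O})=\bigoplus_q\p^{(q)}$ and acts on $\p^{(q)}$ close to the scalar $k^q$.

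Next I would use the relations to extend $f$ to a Lie-group homomorphism. Taking logarithms in the nilpotent factor, the relations $Lf(b_i)L^{-1}=f(b_i)^k$ and $Lf(c)L^{-1}=f(c)^{k^2}$ read $\mathrm{Ad}(L)\log f(b_i)=k\log f(b_i)$ and $\mathrm{Ad}(L)\log f(c)=k^2\log f(c)$. Because $\mathrm{Ad}(L)$ respects the grading and is close to $\mathrm{diag}(k^q)$, its $k$- and $k^2$-eigenspaces are contained in $\p^{(1)}$ and $\p^{(2)}$; hence $\log f(b_i)\in\p^{(1)}$, $\log f(c)\in\p^{(2)}$, so $f(\Lambda)\subset\mathcal{Q}_r$. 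The remaining relations $[b_i,b_j]=c^{m_{ij}}$ show that $f|_\Lambda$ is a homomorphism of $\Lambda$ into the simply-connected nilpotent group $\mathcal{Q}_r$; by Mal'cev's theorem it extends uniquely to $\bar f\colon N\to\mathcal{Q}_r$, whose differential $\h_f:=\bar f_*(\n)$ is, at the jet level, a Heisenberg connection, and on which $E:=\log L$ acts as a dilation by $k$ in the sense of Section \ref{Heis}. Thus $f$ is encoded by the pair $(\h_f,E)$.

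It remains to normalize the pair $(\h_f,E)$. For the cyclic factor, $E$ is already a dilation of $\h_f$ of the standard type and differs from a model dilation only by an element of the commuting connection; using Lemma \ref{Hdil} (the existence of a dilation of both $\h_f$ and $Z(\h_f)$) I would bring $E$ to the standard dilation. For the nilpotent factor I would invoke Lemma \ref{Hpres} and Corollary \ref{Hconnpres}: every Heisenberg connection is conjugate to the standard one, the ambiguity being controlled by $N$ and by $Z(\h_f)$, and I would choose the conjugacy so as to also preserve the normalized $E$. After these conjugations $(\h_f,E)$ becomes the standard Heisenberg connection with its standard dilation; since the integers $k$ and $(m_{ij})\in\mathcal{M}$ that fix $\Gamma$ are preserved throughout, the resulting homomorphism is realized by an embedding of $\Gamma$ into $SU(n+1,1)$, i.e. it is a standard homomorphism $S$, and $\pi_r f$ is conjugate to $\pi_r S$ in $\P_r$.

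The hardest part will be the nilpotent persistence carried out compatibly with the cyclic one: one must conjugate $\h_f$ to the standard Heisenberg connection while simultaneously keeping the normalized dilation $E$ fixed, matching the non-abelian Heisenberg geometry and the grading at once, and then check that the normalized data genuinely come from a standard embedding rather than being only formally conjugate to the model.
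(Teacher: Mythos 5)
Your proposal tracks the paper through its first half: normalize $f(a)$ into $\P_0$ by Proposition \ref{St2}, show $f(\Lambda)\subset\mathcal{Q}_r$, extend to $\bar f\colon N\to\mathcal{Q}_r$ by the Mal'cev--Raghunathan rigidity theorem, and record that $\mathrm{Ad}(f(a))$ acts on $\bar f_*(\n)$ as the dilation by $k$. Your eigenvalue argument for $f(\Lambda)\subset\mathcal{Q}_r$ (the $k$- and $k^2$-eigenspaces of $\mathrm{Ad}(L)$ lie in $\p^{(1)}$ and $\p^{(2)}$ because $\mathrm{Ad}(L)$ is graded and close to $k^q\,\mathrm{id}$ on each $\p^{(q)}$, $0\le q\le r$) is a legitimate alternative to the paper's use of Weil's implicit function theorem on the relation $ghg^{-1}=h^k$ in $\P_0$, provided you first justify that $\log f(b_i)$ exists and that $\log(f(b_i)^k)=k\log f(b_i)$; this does follow, since the relation forces the eigenvalues of the linear part of $f(b_i)$ to be roots of unity close to $1$, hence equal to $1$.

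The gap is in the second half, which is where the content of the proposition actually lies. You propose to ``conjugate $(\h_f,E)$ to the standard Heisenberg connection with its standard dilation'' by invoking Lemma \ref{Hpres}, Corollary \ref{Hconnpres} and Lemma \ref{Hdil}. This cannot work as stated, for two reasons. First, the required conjugacy must be realized by an element of $\P_0$, a finite-dimensional group of graded polynomial transformations fixing $\mathrm{O}$; the Heisenberg-space lemmas only produce an abstract local diffeomorphism near some point, with no reason to be polynomial, to respect the grading, or to fix $\mathrm{O}$. Second, and more fundamentally, every vector field in $\bar f_*(\n)\subset\p^{(1)}\oplus\p^{(2)}$ vanishes at $\mathrm{O}$, so $\h_f$ is not a frame field --- hence not a Heisenberg connection --- at the one point where the normalization must take place; the paper only uses the Heisenberg machinery on $\bar f_*(\n)$ near a point $x\ne\mathrm{O}$, and even there only to prove the cyclic persistence $f(a)=\pi_rR^0(a)$ (Proposition \ref{persistcyclprop}), which you also leave as an intention rather than an argument. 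The nilpotent persistence is the technical heart of the proof and requires a genuinely different idea, namely Proposition \ref{nplushom}: one transports the problem to the opposite algebra $\mathfrak{n}^-=\mathfrak{g}^{(-1)}\oplus\mathfrak{g}^{(-2)}$ via the operator $\Theta\iota$ built from $\mathrm{ad}(\iota^0F^-)$, because the images of $\mathfrak{n}^-$ are genuine frame fields near $\mathrm{O}$; one then shows the resulting conjugacy lies in $\P_0$ by the argument on functions satisfying $(\iota^0E)g=mg$ (Proposition \ref{nminushom}); and one finally pins down $\iota F^+$ by the multilinear computation with the symmetric maps $\Phi$ and $\Psi$. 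None of this is present in, or replaceable by, the lemmas you cite, so the proposal does not establish the proposition.
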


\begin{proof}[Proof Proposition \ref{Main_2} from Proposition \ref{Main_3}]
If a homomorphism $R:\Gamma \to \mathcal{F}$ is $C^3$-close to $R^0$, then $\pi_1R, \pi_1R^0 \in \mathrm{Hom}(\Gamma, \P_1)$ are also close.
By Proposition \ref{Main_3}, we obtain $h \in \P_1$ and a standard homomorphism $S \in \mathrm{Hom}(\Gamma, \F)$ such that $h(\pi_rR(\gamma))h^{-1} = \pi_rS(\gamma)$ for $\gamma \in \Gamma$.
In particular, $h(\pi_1R(\gamma))h^{-1} = \pi_1S(\gamma)$.

Fix $H \in \mathcal{F}$ with $\pi_1H = h \in \P_1$.
Then $\pi_1(HR(a)H^{-1}) = \pi_1(S(a))$.
Thus by Proposition \ref{St2}, replacing $H$ if necessary, we may assume that $HR(a)H^{-1} = S(a)$.
Then Lemma \ref{Main2_1} below shows that $HR(\gamma)H^{-1} = S(\gamma)$ for all $\gamma \in \Gamma$.
\end{proof}

\begin{lem}\label{Main2_1}
Let $S \in \mathrm{Hom}(\Gamma, \F)$ be a standard homomorphism.
Assume $R \in \mathrm{Hom}(\Gamma, \mathcal{F})$ satisfies the following condition:
\begin{itemize}
\item $R(a) = S(a)$,
\item $\pi_1R(\gamma) = \pi_1S(\gamma)$ for $\gamma \in \Gamma$.
\end{itemize}
Then $R = S$.
\end{lem}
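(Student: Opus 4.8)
The plan is to pass from the group $\F$ to its Lie algebra of formal vector fields and exploit the gradation $\mathrm{Poly}(\R^{2n+1},\mathrm{O}) = \bigoplus_{q\ge-1}\p^{(q)}$ introduced in Subsection \ref{jets}. The starting observation is that, because $S$ is a \emph{standard} homomorphism attached to the fixed Iwasawa decomposition, $a$ is sent into $A$ and hence, by Lemma \ref{std2}(i), $S(a)$ is the linear map $I(k)$; consequently $\alpha := R(a) = S(a) = I(k)$. By the very definition of the gradation, conjugation by $\alpha$, i.e. $\mathrm{Ad}(I(k))$, acts on $\p^{(q)}$ as the scalar $k^q$, and these scalars are pairwise distinct since $k\ge 2$. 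This eigenvalue structure is what will isolate the relevant graded piece of each generator.

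First I would record the set-up coming from the hypothesis $\pi_1R(\gamma)=\pi_1S(\gamma)$. Combined with Lemma \ref{std2}(ii), it shows that each of $R(b_i),S(b_i),R(c),S(c)$ has vanishing degree-$(-1)$ part and identity degree-$0$ part in the decomposition $F=\sum_{r}F^{(r)}$; that is, all four lie in the pro-unipotent subgroup $\exp(\mathfrak{q})$ of $\F$, where $\mathfrak{q}=\bigoplus_{q\ge 1}\p^{(q)}$. On this subgroup the logarithm is a well-defined bijection onto $\mathfrak{q}$ satisfying $\log(x^m)=m\log x$ and $\log(\alpha x\alpha^{-1})=\mathrm{Ad}(\alpha)\log x$; these identities can be checked inside each finite jet group $\P_r=\P_0\ltimes\mathcal{Q}_r$, where $\exp\colon\mathfrak{q}_r\to\mathcal{Q}_r$ is a diffeomorphism, and then taken in the projective limit.

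Next, for a generator $\gamma$ with $a\gamma a^{-1}=\gamma^{m_\gamma}$ (so $m_\gamma=k$ for $\gamma=b_i$ and $m_\gamma=k^2$ for $\gamma=c$), I would apply $\log$ to the relation $\alpha R(\gamma)\alpha^{-1}=R(\gamma)^{m_\gamma}$. Writing $X=\log R(\gamma)=\sum_{q\ge 1}X^{(q)}$ with $X^{(q)}\in\p^{(q)}$, this yields $\mathrm{Ad}(I(k))X=m_\gamma X$, that is $(k^q-m_\gamma)X^{(q)}=0$ for every $q$. For $\gamma=b_i$ the only solution is $q=1$, so $\log R(b_i)\in\p^{(1)}$; since $\log S(b_i)\in\p^{(1)}$ as well and $\pi_1$ recovers the degree-$1$ part of an element of $\exp(\p^{(1)})$, the equality $\pi_1R(b_i)=\pi_1S(b_i)$ forces $\log R(b_i)=\log S(b_i)$, hence $R(b_i)=S(b_i)$ for all $i$.

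The case $\gamma=c$ is where the argument must change, and this is the main obstacle: the resonant degree for $c$ is $q=2$ (because $m_c=k^2$), which $\pi_1$ cannot detect, so the conjugation relation by $a$ gives no new information. I would instead use the commutator relations $[b_i,b_j]=c^{m_{ij}}$. Applying the homomorphisms and using $R(b_i)=S(b_i)$ already established, one gets
\[
R(c)^{m_{ij}}=[R(b_i),R(b_j)]=[S(b_i),S(b_j)]=S(c)^{m_{ij}}
\]
for all $i,j$, whence $m_{ij}\log R(c)=m_{ij}\log S(c)$ by injectivity of $\exp$ on $\exp(\mathfrak{q})$. Since $(m_{ij})\in\mathcal{M}$ is a nonzero (indeed nondegenerate) integer matrix, some $m_{ij}\neq 0$, so $\log R(c)=\log S(c)$ and therefore $R(c)=S(c)$. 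As $a,b_1,\dots,b_{2n},c$ generate $\Gamma$ and $R,S$ agree on all of them, we conclude $R=S$.
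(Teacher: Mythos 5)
Your overall mechanism is the same as the paper's: exploit the fact that conjugation by $R(a)=I(k)$ acts on the graded piece of weight $q$ by the scalar $k^q$, so the relation $a\gamma a^{-1}=\gamma^{m_\gamma}$ pins each generator down to a single resonant degree ($q=1$ for $b_i$, $q=2$ for $c$), after which $\pi_1$ (resp.\ the commutator relation $[b_i,b_j]=c^{m_{ij}}$) supplies the missing data. Your repackaging via the logarithm on the pro-unipotent part is a clean reformulation of the paper's degree-by-degree induction, and your treatment of $c$ (unique divisibility: $R(c)^{m}=S(c)^{m}\Rightarrow R(c)=S(c)$) is actually slightly shorter than the paper's, which only extracts $R(c)^{(2)}=S(c)^{(2)}$ from the commutator and then runs a second induction using $ac=c^{k^2}a$.

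There is, however, one genuine gap at the very start. You assert that the hypothesis $\pi_1R(\gamma)=\pi_1S(\gamma)$, combined with Lemma \ref{std2}(ii), shows that $R(b_i)$ and $R(c)$ have vanishing degree-$(-1)$ component. It does not: the projection $\pi_1\colon\F\to\P_1$ \emph{forgets} the $\F^{(-1)}$-component (recall $\P_1$ is identified with $\{F\mid F^{(q)}=0 \text{ for } q=-1,\ q\geq 2\}$), so the hypothesis controls $R(\gamma)^{(0)}$ and $R(\gamma)^{(1)}$ only, and says nothing about $R(\gamma)^{(-1)}$, which for an arbitrary $R$ could a priori be a nonzero map of the form $x\mapsto(0,\dots,0,\ell(x_1,\dots,x_{2n}))$. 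Without killing this component you cannot place $R(b_i)$ in $\exp(\mathfrak{q})$, so the logarithm — and hence your entire computation — is not yet available. The repair is exactly the paper's first step, and it uses the same eigenvalue trick you already employ: from $R(b_i)^{(0)}=\mathrm{id}$ one computes $(R(b_i)^k)^{(-1)}=k\,R(b_i)^{(-1)}$, while conjugation by $I(k)$ multiplies $\F^{(-1)}$ by $k^{-1}$; the relation $aba^{-1}=b^k$ then forces $k^{-1}R(b_i)^{(-1)}=k\,R(b_i)^{(-1)}$, i.e.\ $R(b_i)^{(-1)}=0$ (and similarly for $c$ with $k^{-2}$ versus $k^2$, or via $(R(c)^{m})^{(-1)}=m\,R(c)^{(-1)}=(S(c)^m)^{(-1)}=0$ once $R(b_i)=S(b_i)$ is known). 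With that step inserted your argument goes through.
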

\begin{proof}
Set $F = R(b_i)$, and $G= R(a) = S(a)$.
Since $ab_i = b_i^ka$ and $G \in \P_0$, we see that 
\[
G \circ F^{(r)} =(F^k)^{(r)} \circ G.
\]
We will show that $F$ is uniquely determined by this equation and $F^{(r)},\,r = 0, 1$.
For $r = -1$, we can show that $F^{(-1)} = 0$.
In fact, since $F^{(0)} = (S(b_i)) ^{(0)} = \mathrm{id} \in \P_0$, we see that $(F^k)^{(-1)} = kF^{(-1)}$.
By $G \circ F^{(-1)} \circ G^{-1} = k^{-1}F^{(-1)}$,
\[
k^{-1}F^{(-1)} = kF^{(-1)}
\]
and thus $F^{(-1)} = 0$.
For $r \geq 2$, by $F^{(-1)} = 0$, $F^{(0)} = \mathrm{id}$, we see that
\[
(F^k)^{(r)} = kF^{(r)} + \Phi,
\]
where $\Phi$ is a map determined by  $F^{(r')}\;(r' < r)$.
Using $G \circ F^{(r)} \circ G^{-1} = k^rF^{(r)}$,we see that $F^{(r)}$ is determined by $F^{(r')}\;(r' < r)$.
Thus $F = R(b_i) = S(b_i)$.

It remains to show that $R(c) = S(c)$.
Fix $b_i, b_j$ with $[b_i, b_j] = c^{m}$, $m \neq 0$.
As we have seen that $R(b_i) = S(b_i)$ and $R(b_j) = S(b_j)$, we obtain $R(c)^{m} = S(c)^{m}$.
Since $S(c)^{(0)} = \mathrm{id}$ and $S(c)^{(1)} = 0$,
\[
mR(c)^{(2)}  =(R(c)^{m})^{(2)} = (S(c)^{m})^{(2)} = mS(c)^{(2)}.
\]
Thus $R(c)^{(2)} = S(c)^{(2)}$.
Since $ac = c^{k^2}a$, by a similar argument to that of $F = R(b_i)$, we see that $F = R(c)$ is determined by $F^{(r)},\,r= 0, 1, 2$.
It follows that $R(c) =S(c)$.
\end{proof}

The proof of Proposition \ref{Main_3} can be divided into three steps as follows.
Recall that $\Gamma$ is a subgroup of $AN$ in the Iwasawa decomposition $SU(n+1, 1) = KAN$ defined in Section \ref{std}, and that the closure of $\Gamma$ in $AN$ is $\langle a \rangle \ltimes N$.
As the first step of the proof, we will show that homomorphisms of $\Gamma$ can be extended to  $\langle a \rangle \ltimes N$:
\begin{prop}\label{extf}\personalnote{This is also true for $r\geq 0$.}
If $f \in \mathrm{Hom}(\Gamma, \P_r)$, $r \geq 2$ is a homomorphism with $\pi_1f \in \mathrm{Hom}(\Gamma, \P_1)$ sufficiently close to $\pi_1R^0$, then $f$ can be uniquely extended to a continuous homomorphism $\bar{f}:\langle a \rangle \ltimes N \to \P_r$.
Furthermore, if $f(a) \in \P_0$, then $\bar{f}(N) \subset \mathcal{Q}_r$.
\end{prop}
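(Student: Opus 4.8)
The plan is to first restrict $f$ to the lattice $\Lambda$, extend $f|_\Lambda$ to a continuous homomorphism $N\to\P_r$ by Mal'cev rigidity \cite{Malcev}, and then extend over the cyclic factor $\langle a\rangle$ using the relation $a\lambda a^{-1}\in\Lambda$. Since $\Gamma$ is dense in $\langle a\rangle\ltimes N$, uniqueness of $\bar f$ is immediate from continuity, so the real content is existence. The relevant form of Mal'cev's theorem is that $\Lambda$ is a torsion-free finitely generated nilpotent group whose real Mal'cev completion is $N$; hence any homomorphism from $\Lambda$ into a simply connected nilpotent Lie group extends uniquely and continuously to $N$. Thus everything reduces to exhibiting a simply connected nilpotent Lie subgroup $H\subset\P_r$ containing $f(\Lambda)$.

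I expect this containment to be the main obstacle. To produce $H$, I would first show that the image of every $\gamma\in\Lambda$ under the linear-part homomorphism $\ell\colon\P_0\to GL(2n,\R)\times GL(1,\R)$ is unipotent. For the generators this follows from the defining relations together with the hypothesis that $\pi_1f$ is close to $\pi_1R^0$: applying $\ell$ to $ab_ia^{-1}=b_i^{k}$ and to $aca^{-1}=c^{k^2}$ shows that the eigenvalue multiset of $\ell(f(b_i))$ (resp.\ $\ell(f(c))$) is invariant under $\mu\mapsto\mu^{k}$ (resp.\ $\mu\mapsto\mu^{k^2}$); by Lemma \ref{std2} these eigenvalues are close to $1$, so iterating the induced permutation of the finite multiset forces each eigenvalue to be a root of unity close to $1$, hence equal to $1$. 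For a general element of $\Lambda$ I would invoke the algebraic fact that a nilpotent linear group generated by unipotent elements is itself unipotent (the unipotent elements of a nilpotent algebraic group form its unipotent radical, and the generators already lie there), so $\ell(f(\Lambda))$ is unipotent and, after conjugation, upper unitriangular. Because $\mathcal Q_r$ is normal and nilpotent and a unipotent linear map acts unipotently on each graded piece $\p^{(q)}$, the preimage of this unitriangular group in $\P_r$ is a simply connected nilpotent subgroup $H$ containing $f(\Lambda)$.

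With $H$ in hand, Mal'cev's theorem yields a unique continuous homomorphism $\bar f|_N\colon N\to H\subset\P_r$ extending $f|_\Lambda$. To extend over $a$, I set $\bar f(a^m n)=f(a)^m\,\bar f|_N(n)$; the homomorphism property reduces to $\bar f|_N(\mathrm{Ad}(a)n)=f(a)\,\bar f|_N(n)\,f(a)^{-1}$ for all $n\in N$. Both sides are continuous homomorphisms $N\to\P_r$ with images in simply connected nilpotent subgroups, namely $H$ and $f(a)Hf(a)^{-1}$, and they agree on $\Lambda$ because $a\Lambda a^{-1}\subset\Lambda$ and $f$ is a homomorphism on $\Gamma$. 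Since the common values are unipotent their logarithms are unique, so the induced Lie algebra maps agree on the basis $\log\Lambda$ of $\n$; hence the two homomorphisms coincide on all of $N$ and $\bar f$ is a well-defined homomorphism. Uniqueness of $\bar f$ then follows from the density of $\Gamma$ in $\langle a\rangle\ltimes N$ together with continuity.

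Finally, to prove that $f(a)\in\P_0$ forces $\bar f(N)\subset\mathcal Q_r$, I would compose with the projection $\pi_0\colon\P_r\to\P_0$, whose kernel is $\mathcal Q_r$. The identity $\bar f(\mathrm{Ad}(a)n)=f(a)\bar f(n)f(a)^{-1}$ together with $f(a)\in\P_0$ gives $\pi_0\bar f\circ\mathrm{Ad}(a)=\mathrm{Ad}(f(a))\circ\pi_0\bar f$. Passing to the differential $\psi=d(\pi_0\bar f)\colon\n\to\p_0=\p^{(0)}$, an eigenvector $X$ of $\mathrm{Ad}(a)|_\n$ with eigenvalue $\mu\in\{k,k^2\}$ would map to an eigenvector of $\mathrm{Ad}(f(a))$ with the same eigenvalue $\mu\ge k\ge2$; but $\mathrm{Ad}(I(k))$ acts as the identity on $\p^{(0)}$, so $\mathrm{Ad}(f(a))$ has all eigenvalues close to $1$. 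Hence $\psi(X)=0$, so $\psi=0$ and $\pi_0\bar f$ is trivial, giving $\bar f(N)\subset\mathcal Q_r$.
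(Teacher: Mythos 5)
Your proposal is correct in substance but reaches the key containment by a genuinely different route. The paper's proof does not try to show that $f(\Lambda)$ is merely unipotent: it first conjugates so that $f(a)\in\P_0$, and then applies Weil's implicit function theorem to the map $(g,h)\mapsto ghg^{-1}h^{-k}$ on $\P_0\times\P_0$ to conclude that $\pi_0f(b_i)=\mathrm{id}$ \emph{exactly} (the point being that $\mathrm{Ad}(\pi_rR^0(a))=\mathrm{id}$ on $\p_0$ while the relation forces the eigenvalue $k\geq 2$, so the linearized equation has only the trivial solution). Hence $f(\Lambda)\subset\mathcal{Q}_r$ from the outset, Raghunathan's theorem applies directly with target $\mathcal{Q}_r$, and the ``furthermore'' clause is automatic. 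You instead prove the weaker statement that the linear parts of $f(\Lambda)$ are unipotent via the eigenvalue-permutation argument, build an auxiliary simply connected nilpotent subgroup $H\supset f(\Lambda)$, extend into $H$, and only afterwards push the image into $\mathcal{Q}_r$ by the $\mathrm{Ad}$-eigenvalue argument on $\p_0$ --- which is, in effect, the infinitesimal shadow of the paper's Weil argument. Your route is more elementary (no implicit function theorem) but buys this at the cost of two verifications you currently gloss over: first, ``a root of unity close to $1$ equals $1$'' needs the uniform bound on the order coming from the finiteness of the eigenvalue multiset (the permutation $\mu\mapsto\mu^k$ has order at most $(2n+1)!$, so the order of each root of unity divides $k^{(2n+1)!}-1$, and only then does closeness force $\mu=1$); second, the claim that the preimage $H$ of the unitriangular group is simply connected nilpotent requires checking that elements of $\P_0$ with unipotent linear part (including the quadratic correction in the last coordinate) act unipotently on $\mathcal{Q}_r$ and on the quadratic part of $\P_0$ --- this is true (Kolchin's theorem applied to the faithful jet representation, on whose associated graded such elements act through their unipotent linear part), but it is not free. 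The remaining steps --- extension over $\langle a\rangle$ via uniqueness of logarithms of unipotent elements, and uniqueness of $\bar f$ from the density of $\Gamma$ in $\langle a\rangle\ltimes N$ --- match the paper's logic.
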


As the next step, using the extension $\bar{f}$, we will show the persistence of $f(a)$:
\begin{prop}\label{persistcyclprop}
If $f \in \mathrm{Hom}(\Gamma, \P_r)$, $r \geq 2$ is a homomorphism with $\pi_1f \in \mathrm{Hom}(\Gamma, \P_1)$ sufficiently close to $\pi_1R^0$ and $f(a) \in \P_0 \subset \P_r$, then $f(a) = \pi_rR^0(a)  \in \P_0$.
\end{prop}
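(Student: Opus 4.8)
The plan is to put $g = f(a)\in\P_0$ and show directly that $g$ coincides with the linear contraction $\pi_rR^0(a) = I(k)$. The starting observation is that $\P_0 = \F\cap\F^{(0)}$ consists exactly of the invertible formal transformations commuting with $I(k)$, so $g$ commutes with $I(k)$ inside $\P_r$. Hence $\mathrm{Ad}(g)$ commutes with $\mathrm{Ad}(I(k))$, and since $\mathrm{Ad}(I(k))$ acts on the graded piece $\p^{(q)}$ as the scalar $k^q$, with these scalars pairwise distinct, $\mathrm{Ad}(g)$ must preserve every $\p^{(q)}$. Because $g$ is $C^1$-close to $I(k)$, the restriction $\mathrm{Ad}(g)|_{\p^{(q)}}$ is close to $k^q\,\mathrm{id}$; in particular its eigenvalues cluster near $k^q$ and stay bounded away from $k^{q'}$ for $q'\neq q$.

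Next I would use the extension from Proposition \ref{extf}. Since $f(a)\in\P_0$, that proposition produces a continuous homomorphism $\bar f\colon\langle a\rangle\ltimes N\to\P_r$ with $\bar f(N)\subset\mathcal{Q}_r$, hence a Lie algebra homomorphism $\psi = d\bar f\colon\n\to\mathfrak{q}_r$. The relation $\bar f(a\nu a^{-1}) = g\,\bar f(\nu)\,g^{-1}$ differentiates to $\mathrm{Ad}(g)\circ\psi = \psi\circ\delta$, where $\delta = \mathrm{Ad}(a)|_\n$ is the dilation acting as $k$ on $\mathfrak{g}^{(+1)}$ and as $k^2$ on $\mathfrak{g}^{(+2)}$. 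Thus each $\psi(X)$ with $X\in\mathfrak{g}^{(+1)}$ is an $\mathrm{Ad}(g)$-eigenvector of eigenvalue exactly $k$, and $\psi(X_{2n+1})$ is one of eigenvalue exactly $k^2$. Combined with the eigenvalue separation above, this forces $\psi(\mathfrak{g}^{(+1)})\subset\p^{(1)}$ and $\psi(\mathfrak{g}^{(+2)})\subset\p^{(2)}$, matching the standard homomorphism $R^0$, for which one checks $\phi^0_*\rho^0_*(\mathfrak{g}^{(+\lambda)})\subset\p^{(\lambda)}$.

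I would then read off the parameters of $g$, writing an element of $\P_0$ as a triple $(A,d,Q)$ with $A\in GL(2n,\R)$, $d\in\R^\times$, and a quadratic part $Q$ (the $x_jx_l\partial_{2n+1}$ directions). Comparing linear parts in $g\,f(b_i)\,g^{-1} = f(b_i)^k$: since $f(b_i)\in\mathcal{Q}_r$, its linear part is unipotent, equal to $\bigl(\begin{smallmatrix} I_{2n}&w_i\\0&1\end{smallmatrix}\bigr)$ with $w_i$ close to the basis vector $u_i$ from Lemma \ref{std2}; the relation gives $Aw_i = kd\,w_i$ for all $i$, and as the $w_i$ form a basis of $\R^{2n}$ this yields $A = kd\,I_{2n}$. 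To pin down $d$, observe that $f(c) = \exp\psi(X_{2n+1})$ with $\psi(X_{2n+1})\in\p^{(2)}$ an $\mathrm{Ad}(g)$-eigenvector of eigenvalue $k^2$; the explicit standard eigenvector $\phi^0_*\rho^0_*(X_{2n+1})$ has leading term $-x_{2n+1}^2\partial_{2n+1}$, and since $\mathrm{Ad}(g)(x_{2n+1}^2\partial_{2n+1}) = d^{-1}x_{2n+1}^2\partial_{2n+1}+\cdots$, matching the coefficient of $x_{2n+1}^2\partial_{2n+1}$ in the eigenvalue equation forces $d^{-1}=k^2$, so $d=k^{-2}$ and $A=k^{-1}I_{2n}$. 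Finally, with the linear part of $g$ now equal to that of $I(k)$, the sub-leading ($\p^{(1)}$) terms of the equations $\mathrm{Ad}(g)\psi(X_i) = k\psi(X_i)$ involve $Q$ linearly; matching against the standard ($Q=0$) eigenvectors forces $Q=0$, whence $g = I(k) = \pi_rR^0(a)$.

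The main obstacle will be this last bookkeeping: carrying out the adjoint computation on the nonlinear graded pieces $\p^{(1)}$ and $\p^{(2)}$ and matching it against the explicit standard eigenvectors $\phi^0_*\rho^0_*(X_i)$ and $\phi^0_*\rho^0_*(X_{2n+1})$ precisely enough to extract $d=k^{-2}$ and to annihilate $Q$. This requires knowing the standard eigenvectors beyond their linear parts and controlling the mixing induced by the quadratic part $Q$, and one must verify that the closeness hypothesis keeps all the eigenvalue-separation estimates uniform, in the spirit of Lemma \ref{St2_1}.
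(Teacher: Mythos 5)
Your strategy is sound and genuinely different from the paper's. The paper forms $g = f(a)^{-1}\pi_rR^0(a)$, shows via $f(a)_*\bar f_*(X_i) = k\,\bar f_*(X_i)$ and the grading that $g$ commutes with the local $N$-action generated by the Heisenberg connection $\varphi(\n)$ built from the components $\bar f_*(X_i)^{(1)}$ and $\bar f_*(Y)^{(2)}$ (Lemma \ref{Hconnconstruct}), and then invokes the machinery of Section \ref{Heis}: $g$ is locally $\exp(X)$ for some $X$ in the centralizer $Z(\varphi(\n))$, and the dilation $E^0=\phi^0_*\rho^0_*(E)$, which $g$ preserves, forces $X=0$. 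You avoid Section \ref{Heis} entirely and instead exploit the explicit description of $\P_0$ as triples $(A,d,Q)$, pinning the parameters down from the eigenvector equations $\mathrm{Ad}(f(a))\circ\psi=\psi\circ\mathrm{Ad}(a)$. The paper's route reuses the Heisenberg/dilation toolkit in Subsection \ref{persistnilp}; yours is more elementary at the price of the bookkeeping you acknowledge. I checked the key computations and they do close up: the eigenvalue separation gives $\psi(\mathfrak{g}^{(+1)})\subset\p^{(1)}$; the coefficient of $x_{2n+1}^2\partial_{2n+1}$ on $\p^{(2)}$ transforms by exactly $d^{-1}$ under $\mathrm{Ad}(g)$ for any $(A,d,Q)\in\P_0$ (neither $A$ nor $Q$ can feed other basis monomials into it); and once the linear part is $I(k)$, writing $g=I(k)\exp(W)$ with $W=k^2Q(x')\partial_{2n+1}\in\p^{(0)}$, the first $2n$ components of $\mathrm{Ad}(\exp W)\psi(X_i)-\psi(X_i)$ reduce, up to sign, to $k^2(w_i)_jQ(x')$, which kills $Q$.

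The one genuine gap is in the step extracting $d=k^{-2}$. You need $\psi(X_{2n+1})\in\p^{(2)}$ to have a \emph{nonzero} $x_{2n+1}^2\partial_{2n+1}$-coefficient, and you justify this by closeness to the standard eigenvector $\phi^0_*\rho^0_*(X_{2n+1})$. But the hypothesis only controls $\pi_1f$, i.e.\ the graded components of degree at most $1$; it says nothing about the $\p^{(2)}$-component of $f(c)$, which a priori could be anything (even zero). The fix is exactly the observation underlying the paper's Lemma \ref{Hconnconstruct}: the relation $[b_i,b_j]=c^{m_{ij}}$ gives $\psi(X_{2n+1})=m_{ij}^{-1}[\psi(X_i),\psi(X_j)]$ for some $m_{ij}\neq 0$, and since the $\psi(X_i)$ lie in $\p^{(1)}$ and \emph{are} controlled by $\pi_1$-closeness, this bracket is close to $m_{ij}^{-1}[\iota^0X_i,\iota^0X_j]=\iota^0(X_{2n+1})$, whose $x_{2n+1}^2\partial_{2n+1}$-coefficient is nonzero. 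With that inserted, your argument goes through.
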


The final step of the proof is to prove the persistence of $f(N)$:
\begin{prop}\label{persistnilpprop}
If $f \in \mathrm{Hom}(\Gamma, \P_r)$, $r \geq 2$ is a homomorphism with $\pi_1f \in \mathrm{Hom}(\Gamma, \P_1)$ sufficiently close to $\pi_1R^0$ and $f(a)  = \pi_rR^0(a)  \in \P_0 \subset \P_r$, then there is an element $h \in \P_0$ such that $h\bar{f}(N)h^{-1} = \pi_rR^0(N)$, where $\bar{f}$ is the extension of $f$ obtained by Proposition \ref{extf}.
\end{prop}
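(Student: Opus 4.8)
The plan is to descend to Lie algebras and use the persisted cyclic element $f(a)=I(k)$ to confine $\bar f(N)$ to only two weighted degrees, after which a single low-degree normalization remains.

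By the ``furthermore'' clause of Proposition \ref{extf} (which applies since $f(a)=\pi_rR^0(a)\in\P_0$), the restriction $\bar f|_N\colon N\to\mathcal{Q}_r$ is a continuous homomorphism; as $N$ is simply connected and $\mathcal{Q}_r$ is nilpotent, $\bar f|_N$ is determined by the Lie algebra homomorphism $\beta=(\bar f|_N)_*\colon\n\to\mathfrak{q}_r=\bigoplus_{q\ge 1}\p^{(q)}$, and likewise the standard one by $\beta^0=(\pi_rR^0|_N)_*$. Differentiating the relation $\bar f(a\gamma a^{-1})=f(a)\bar f(\gamma)f(a)^{-1}$ along $N$ gives $\beta\circ\mathrm{Ad}(a)=I(k)_*\circ\beta$, using $f(a)=I(k)$. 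Now $\mathrm{Ad}(a)$ acts on $\mathfrak{g}^{(+1)}$ by $k$ and on $\mathfrak{g}^{(+2)}$ by $k^2$ (read off from $ab_ia^{-1}=b_i^k$, $aca^{-1}=c^{k^2}$), whereas $I(k)_*$ acts on $\p^{(q)}$ by $k^q$. Decomposing $\beta(X)=\sum_q\beta(X)_q$ and matching eigenvalues forces $\beta(\mathfrak{g}^{(+1)})\subset\p^{(1)}$ and $\beta(\mathfrak{g}^{(+2)})\subset\p^{(2)}$, and the same for $\beta^0$. Hence both $\bar f(N)$ and $\pi_rR^0(N)$ lie in $\exp(\p^{(1)}\oplus\p^{(2)})$; since $\mathrm{Ad}(h)$ preserves each $\p^{(q)}$ for $h\in\P_0$, it suffices to conjugate the Heisenberg subalgebra $\beta(\n)$ onto $\beta^0(\n)$ by some $h\in\P_0$.

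Next I reduce to degree one. Write $\beta_1=\beta|_{\mathfrak{g}^{(+1)}}$ and $\beta_2=\beta|_{\mathfrak{g}^{(+2)}}$. Nondegeneracy of $\Phi$ gives $[\mathfrak{g}^{(+1)},\mathfrak{g}^{(+1)}]=\mathfrak{g}^{(+2)}$, so the homomorphism identity $\beta_2([X,Y])=[\beta_1X,\beta_1Y]$ determines $\beta_2$ from $\beta_1$ (and similarly for $\beta^0$). Consequently, if I can produce $h\in\P_0$ with $\mathrm{Ad}(h)\beta_1(\mathfrak{g}^{(+1)})=\beta^0_1(\mathfrak{g}^{(+1)})$, then applying the bracket identity to $\mathrm{Ad}(h)\beta$ and to $\beta^0$ and invoking surjectivity of the bracket shows that the degree-two images agree as well, whence $h\bar f(N)h^{-1}=\pi_rR^0(N)$. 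The hypothesis that $\pi_1f$ is close to $\pi_1R^0$ says exactly that $\beta_1$ is close to $\beta^0_1$; together with the fact that $\beta^0$ is injective (the standard $N$-action is faithful on $r$-jets because $r\ge 2$ detects the center), this forces $\beta$ to be injective, so $\dim\beta_1(\mathfrak{g}^{(+1)})=2n$.

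It remains to normalize $\beta_1$, which is the heart of the matter. Splitting $\p^{(1)}$ by ordinary polynomial degree, let $V_1=\mathrm{span}\{x_{2n+1}\partial_a : a\le 2n\}$ be its degree-one part and $V_2$ its degree-two part (spanned by the $x_bx_c\partial_a$ and the $x_b x_{2n+1}\partial_{2n+1}$). Because the linear parts $u_i$ of the generators stay a basis under small perturbation, the image $W=\beta_1(\mathfrak{g}^{(+1)})$ projects isomorphically onto $V_1$, so $W$ is the graph of a linear map $\Psi\colon V_1\to V_2$ close to the standard $\Psi^0$, and I must find $h\in\P_0$ with $\mathrm{Ad}(h)$ carrying $\mathrm{graph}(\Psi)$ to $\mathrm{graph}(\Psi^0)$. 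The relevant directions in $\mathrm{Lie}(\P_0)$ are the degree-two fields $x_ax_b\partial_{2n+1}\in\p^{(0)}$, for which a short computation gives $[x_ax_b\partial_{2n+1},\,x_{2n+1}\partial_c]=x_ax_b\partial_c-(\delta_{ca}x_b+\delta_{cb}x_a)x_{2n+1}\partial_{2n+1}\in V_2$, so their $\mathrm{ad}$-action moves the graph. The crux, and the main obstacle, is to show that these $\mathrm{ad}$-maps span the tangent space at $\Psi^0$ to the set of \emph{admissible} graphs — those arising from Heisenberg homomorphisms, cut out by the condition that the induced bracket $W\times W\to\p^{(2)}$ be proportional to $\Phi$; granting this infinitesimal surjectivity, the inverse function theorem on the finite-dimensional group $\P_0$ produces the required $h$. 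This is precisely where the Heisenberg formalism of Section \ref{Heis} is brought to bear, as the admissibility condition and the stabilizer of $\Psi^0$ are governed by the Heisenberg connection carried by the standard action, its centralizer (Corollary \ref{Hconnpres}), and the commuting Heisenberg structure furnished by Lemma \ref{Hpres}.
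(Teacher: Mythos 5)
Your first two paragraphs are sound and agree with the paper's setup: the eigenvalue matching against $f(a)=I(k)$ showing $\bar f_*$ is a homomorphism of \emph{graded} Lie algebras $\n^+\to\p$, and the reduction to conjugating the subalgebra $\bar f_*(\n^+)$ onto $\iota^0(\n^+)$ by some $h\in\P_0$ (with the degree-two part determined by the degree-one part via $[\mathfrak{g}^{(+1)},\mathfrak{g}^{(+1)}]=\mathfrak{g}^{(+2)}$), are exactly the content of the paper's Proposition \ref{nplushom} and the surrounding discussion. The problem is your third paragraph: the entire difficulty of the proposition is concentrated in the sentence beginning ``The crux, and the main obstacle, is to show that these $\mathrm{ad}$-maps span the tangent space at $\Psi^0$ to the set of admissible graphs,'' and you do not prove it — you explicitly write ``granting this infinitesimal surjectivity.'' That surjectivity is not a routine verification: the space of graphs $\Psi\colon V_1\to V_2$ is large (note also that your $V_2$ omits the cubic monomials $x_bx_cx_d\partial_{2n+1}$, which do occur in $\p^{(1)}$), the ``admissible'' subset is not obviously a manifold whose tangent space you can identify, and even the statement that admissibility is ``the induced bracket being proportional to $\Phi$'' is not well posed without fixing an identification of $W$ with $\mathbb{C}^n$. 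So the proposal reduces the proposition to an unproved transversality claim that carries all of the actual content.

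For comparison, the paper closes this gap by a genuinely different mechanism that never linearizes over the space of graphs. It brings in the opposite nilpotent subalgebra $\mathfrak{n}^-=\mathfrak{g}^{(-1)}\oplus\mathfrak{g}^{(-2)}$ of $\mathfrak{su}(n+1,1)$: from $\iota\in\mathrm{Hom}_\mathrm{gr}(\n^+,\p)$ it manufactures $\Theta\iota\in\mathrm{Hom}_\mathrm{gr}(\n^-,\p)$ by bracketing with the \emph{fixed} element $\iota^0F^-$ (Lemma \ref{Theta}); it proves that homomorphisms $\n^-\to\p$ near $\iota^0|_{\n^-}$ are rigid up to $\P_0$ (Proposition \ref{nminushom}, which is where the Heisenberg-connection and dilation machinery of Section \ref{Heis} is actually used — as a frame field near $\mathrm{O}$, not near a point $x\neq\mathrm{O}$); after normalizing $\Theta\iota=\iota^0|_{\n^-}$ it derives $[\iota^0F^-,\iota F^+]=E$ and $\iota\mathfrak{g}^{(+1)}=[\iota^0\mathfrak{g}^{(-1)},\iota\mathfrak{g}^{(+2)}]$, so that everything reduces to the single element $X=\iota F^+\in\p^{(+2)}$; and finally it kills the perturbation of $X$ by an explicit computation showing that certain symmetric multilinear maps built from $\mathrm{ad}(\iota^0\xi^-)$ applied to $X$ vanish. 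That last computation is the paper's substitute for your unproved surjectivity, and it is several pages of nontrivial identities specific to $\mathfrak{su}(n+1,1)$. Your outline is a reasonable plan of attack, but as written it is not a proof.
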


The proof of the above propositions will be given in the remaining subsections.
Proposition \ref{Main_3} can be derived from these three propositions as follows:
\begin{proof}[Proof of Proposition \ref{Main_3} from Proposition \ref{extf}, \ref{persistcyclprop}, and \ref{persistnilpprop}]
Consider a homomorphism $f \in \mathrm{Hom}(\Gamma, \P_r)$, $r \geq 2$ with $\pi_1f \in \mathrm{Hom}(\Gamma, \P_1)$ sufficiently close to $\pi_1R^0$.
By Proposition \ref{St2}, after replacing $f$ with its conjugate, we may assume that $f(a) \in \P_0 \subset \P_r$.
We obtain the extension $\bar{f} \in \mathrm{Hom}(\langle a \rangle \ltimes N, \P_r)$ by  Proposition \ref{extf}.
By Proposition \ref{persistcyclprop}, $f(a) = \pi_rR^0(a)$.
After replacing $f$ with its conjugate, by Proposition \ref{persistnilpprop}, we may assume that $\bar{f}(N) = \pi_rR^0(N)$.
Using Lemma \ref{closurerigid} below, we obtain a standard homomorphism $S \in \mathrm{Hom}(\Gamma, \F)$ such that $\pi_r f = \pi_r S \in \mathrm{Hom}(\Gamma, \P_r)$.
\end{proof}
\begin{lem}\label{closurerigid}
Let $f \in \mathrm{Hom}(\Gamma, \P_r)$ $(r \geq 2)$ be a homomorphism with $\pi_1f:\Gamma \to \P_1$ sufficiently close to $\pi_1R^0$ and $\bar{f}:\langle a \rangle \ltimes N \to \P_r$ the extension of $f$.
If
\begin{itemize}
\item $f(a) =\pi_rR^0(a)$, and
\item $\bar{f}(N) = \pi_rR^0(N)$,
\end{itemize}
then there is a standard homomorphism $S \in \mathrm{Hom}(\Gamma, \F)$ such that $f = \pi_r S$.
\end{lem}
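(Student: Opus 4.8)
The plan is to use the extension $\bar f\colon\langle a\rangle\ltimes N\to\P_r$ supplied by Proposition \ref{extf} to extract, from the two hypotheses, an automorphism of $N$ that records how $\bar f$ differs from the standard model, and then to realize that automorphism by an honest standard embedding of $\Gamma$. Write $\pi_rR^0$ also for the (extended) standard homomorphism on $\langle a\rangle\ltimes N$, and set $\mathcal N:=\pi_rR^0(N)=\bar f(N)\subset\P_r$. The first step is to observe that $\pi_rR^0|_N\colon N\to\mathcal N$ is an isomorphism of Lie groups for $r\ge2$: by Lemma \ref{std2} the weight-$1$ part of $\pi_rR^0(n)$ is nonzero as soon as the $\mathfrak g^{(+1)}$-component of $n$ is nonzero (the vectors $u_i$ form a basis), and since the central direction $\mathfrak g^{(+2)}$ acts with leading weight $2$ — consistent with the relation $aca^{-1}=c^{k^2}$ — its contribution is retained once $r\ge 2$; hence $\pi_rR^0|_N$ is injective, and being a continuous injective homomorphism out of a connected, simply connected nilpotent group it is an isomorphism onto $\mathcal N$. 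Therefore
\[
\theta:=\bigl(\pi_rR^0|_N\bigr)^{-1}\circ\bar f|_N
\]
is a well-defined continuous endomorphism of $N$; it is surjective because $\bar f(N)=\mathcal N$, and a surjective continuous endomorphism of a connected, simply connected nilpotent Lie group is an automorphism. Thus $\theta\in\mathrm{Aut}(N)$ and $\bar f|_N=\pi_rR^0|_N\circ\theta$.

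The second step is the compatibility of $\theta$ with the $a$-action. Write $\phi_a=\mathrm{Ad}(a)\in\mathrm{Aut}(N)$. Applying the homomorphism $\bar f$ to $ana^{-1}=\phi_a(n)$ and using $\bar f(a)=\pi_rR^0(a)$ together with the fact that $\pi_rR^0$ is a homomorphism, one computes $\bar f(ana^{-1})=\pi_rR^0\bigl(\phi_a(\theta(n))\bigr)$ on one side and $\pi_rR^0\bigl(\theta(\phi_a(n))\bigr)$ on the other. Cancelling the injective map $\pi_rR^0|_N$ yields $\theta\circ\phi_a=\phi_a\circ\theta$.

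The final step is to build the embedding. Define $\iota\colon\Gamma\to SU(n+1,1)$ by $\iota(a)=a$ and $\iota(\lambda)=\theta(\lambda)$ for $\lambda\in\Lambda\subset N$. Since $\theta$ is an automorphism of $N$ it preserves the relations $[b_i,b_j]=c^{m_{ij}}$, and since $\theta$ commutes with $\phi_a$ it preserves $ab_ia^{-1}=b_i^k$ and $aca^{-1}=c^{k^2}$; hence $\iota$ is a well-defined homomorphism with respect to the presentation (\ref{nbc}), and it is injective because $\theta|_\Lambda$ is. As $\theta$ is a homeomorphism, $\theta(\Lambda)$ is again a lattice of $N$, and $\phi_a(\theta(\Lambda))=\theta(\phi_a(\Lambda))\subset\theta(\Lambda)$; therefore $\iota(\Gamma)=\langle a,\theta(\Lambda)\rangle$ is a standard subgroup for the same Iwasawa decomposition in the sense of Definition \ref{stdsubgrp}. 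Let $S=D_\mathrm{O}\phi^0_*(\rho^0\circ\iota)\in\mathrm{Hom}(\Gamma,\F)$ be the associated standard homomorphism. Then $\pi_rS(a)=\pi_rR^0(a)=f(a)$, and for $\lambda\in\Lambda$ we have $\pi_rS(\lambda)=\pi_rR^0(\theta(\lambda))=\bar f(\lambda)=f(\lambda)$. As these agree on the generators $a,b_1,\dots,b_{2n},c$, we conclude $f=\pi_rS$.

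The step carrying the real content is the recognition that matching the single element $f(a)$ and the entire image $\bar f(N)=\pi_rR^0(N)$ already pins down $\bar f|_N$ up to an automorphism $\theta$ of $N$ commuting with the $a$-dilation, and that this is precisely the freedom parametrizing standard embeddings with the fixed Iwasawa decomposition. The only genuinely technical point — and the reason the hypothesis $r\ge2$ is needed — is the injectivity of $\pi_rR^0|_N$, which guarantees that $\theta$ is a bona fide Lie automorphism rather than a mere set map; once this is in place, everything else is formal bookkeeping with the presentation (\ref{nbc}).
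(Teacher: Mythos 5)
Your proof is correct, and its skeleton coincides with the paper's: both define the candidate embedding by inverting $\pi_rR^0$ on its image, i.e.\ $\iota(a)=a$ and $\iota(\lambda)=(\pi_rR^0|_N)^{-1}(f(\lambda))$, and then check that $\iota(\Gamma)$ is a standard subgroup for the fixed Iwasawa decomposition. The one step where you genuinely diverge is the verification that $\iota(\Lambda)$ is a lattice of $N$. The paper gets this from the closeness hypothesis: since $\pi_1f$ is close to $\pi_1R^0$, the homomorphism $\iota|_\Lambda$ is close to the inclusion $\Lambda\subset N$ in $\mathrm{Hom}(\Lambda,N)$, and one concludes by openness of the lattice condition. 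You instead exploit the exact equality $\bar f(N)=\pi_rR^0(N)$: it makes $\theta=(\pi_rR^0|_N)^{-1}\circ\bar f|_N$ a surjective continuous endomorphism of the connected simply connected nilpotent group $N$, hence an automorphism, so $\theta(\Lambda)=\iota(\Lambda)$ is a lattice for free, and the commutation $\theta\circ\mathrm{Ad}(a)=\mathrm{Ad}(a)\circ\theta$ gives the required $a$-invariance $a\,\theta(\Lambda)a^{-1}\subset\theta(\Lambda)$. Your route is more self-contained --- it uses the closeness hypothesis only through the existence of the extension $\bar f$ and does not rely on the (unproved in the paper) openness of the lattice condition in $\mathrm{Hom}(\Lambda,N)$ --- while the paper's is shorter because it leans on perturbation. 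You also supply the injectivity of $\pi_rR^0|_N$ for $r\ge2$, which the paper merely asserts; your weight argument (nonzero $\p^{(1)}$-component on $\mathfrak{g}^{(+1)}$ via the basis $\{u_i\}$ from Lemma \ref{std2}, nonzero $\p^{(2)}$-component on the center, both retained once $r\ge 2$) is the right one.
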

\begin{proof}
By the definition of the standard homomorphism, it is sufficient to show that there is an embedding $\iota:\Gamma \to AN$ as a standard subgroup such that $f = \pi_rR^0 \circ \iota$.

Since the action $\rho^0$ of $AN$ on $S^{2n+1}$ admits the same global fixed  point $p^0$ as that of $\Gamma$, $R^0 = D_\mathrm{O}\phi^0_*\rho^0 \in \mathrm{Hom}(\Gamma, \F)$ admits the natural extension to $D_\mathrm{O}\phi^0_*\rho^0 \in \mathrm{Hom}(AN, \F)$, which will also be denoted by $R^0 \in \mathrm{Hom}(AN, \F)$.
Note that $\pi_rR^0:AN \to \P_r$ is an automorphism onto its image.

By the assumption, we obtain the embedding $\iota = (\pi_rR^0)^{-1} \circ f:\Gamma \to AN$ satisfying $\iota(a) = a$ and $\iota(\Lambda) \subset N$.
It remains to show that $\iota(\Lambda) \subset N$ is a lattice.
Since $\pi_1f$ is close to $\pi_1R^0$, $\iota$ is close to the inclusion $\Gamma \subset AN$.
In particular, $\iota|_\Lambda$ is close to the inclusion $\Lambda \subset N$ in $\mathrm{Hom}(\Lambda, N)$.
It follows that $\iota(\Lambda) \subset N$ is a lattice.
\end{proof}
\subsection{Extension of homomorphisms to a Lie group}\label{exthomtoLie}
In this subsection, we will give a proof of Proposition \ref{extf}.
Let $f \in \mathrm{Hom}(\Gamma, \P_r)$ be a map such that $\pi_1f$ is close to $\pi_1R^0$.
First, by Proposition \ref{St2}, after replacing $f$ with its conjugate, we may assume that $f(a) \in \P_0 \subset \P_r$.
Next, we will show $\pi_0f(b_i) = \mathrm{id} \in \P_0$:
\begin{prop}
If $g, h \in \P_0$ satisfying $g h = h^kg$ are close to $\pi_0R^0(a), \mathrm{id} \in  \P_0$, respectively, then $h = \mathrm{id}$.
\end{prop}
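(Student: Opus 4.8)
The plan is to use the explicit description of $\P_0$ from Section \ref{Stb} and to kill the linear and quadratic parts of $h$ one after another by two non-resonance (eigenvalue-gap) arguments. Since $F\in\F^{(0)}$ means $F_i\in\mathcal{H}^{(1)}=\langle x_1,\dots,x_{2n}\rangle$ for $i\le2n$ and $F_{2n+1}\in\mathcal{H}^{(2)}=\langle x_{2n+1}\rangle\oplus\mathrm{Sym}^2\langle x_1,\dots,x_{2n}\rangle$, every $F\in\P_0$ has the form $F(x',y)=(Ax',\,Q(x')+cy)$, where $x'\in\R^{2n}$, $y=x_{2n+1}$, $A\in GL(2n,\R)$, $c\neq0$, and $Q$ is a quadratic form on $\R^{2n}$. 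Writing $h=(Ax',\,Q(x')+cy)$ and $g=(Bx',\,R(x')+dy)$, I would analyse $ghg^{-1}=h^k$ through its linear part and then its quadratic part, using that $h$ is close to $\mathrm{id}$ (so $A\approx I_{2n}$, $c\approx1$, $Q\approx0$) and $g$ is close to $\pi_0R^0(a)=I(k)$ (so $B\approx\tfrac1kI_{2n}$, $d\approx\tfrac1{k^2}$, $R\approx0$).

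First I would treat the linear part. The map $F\mapsto(A,c)\in GL(2n,\R)\times\R^{\times}$ is a group homomorphism, so the relation gives $c=c^k$ and $BAB^{-1}=A^k$. From $c^{k-1}=1$ with $c\approx1$ I get $c=1$. For the matrix part, write $A=\exp X$ with $X=\log A$ small; then $\exp(\mathrm{Ad}(B)X)=BAB^{-1}=A^k=\exp(kX)$, and since both $\mathrm{Ad}(B)X$ and $kX$ are small, injectivity of $\exp$ near $0$ yields $\mathrm{Ad}(B)X=kX$. As $B\approx\tfrac1kI_{2n}$ and conjugation by a scalar matrix is trivial, $\mathrm{Ad}(B)$ is close to the identity on $\mathfrak{gl}(2n,\R)$ and so has no eigenvalue near $k\ge2$; hence $X=0$ and $A=I_{2n}$.

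With $A=I_{2n}$ and $c=1$, the map $h(x',y)=(x',\,Q(x')+y)$ is just a translation in $y$ by $Q(x')$, so $h^k(x',y)=(x',\,kQ(x')+y)$, while a direct computation gives $ghg^{-1}(x',y)=(x',\,d\,Q(B^{-1}x')+y)$. Equating the two leaves the single relation $d\,Q\circ B^{-1}=kQ$ on quadratic forms. Regarding $Q$ as an element of the space $V$ of quadratic forms on $\R^{2n}$, on which $B$ acts by $(B\cdot Q)(x')=Q(B^{-1}x')$, this reads $B\cdot Q=\tfrac{k}{d}\,Q$. At $B=\tfrac1kI_{2n}$ the action is $Q\mapsto Q(kx')=k^2Q$, so for $B\approx\tfrac1kI_{2n}$ the spectrum of $B\cdot$ on $V$ lies near $k^2$, whereas $\tfrac{k}{d}$ lies near $k^3$. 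Since $k^3-k^2=k^2(k-1)\ge4$ for $k\ge2$, the scalar $\tfrac{k}{d}$ is not an eigenvalue of $B\cdot$ once $g$ is close enough to $I(k)$, which forces $Q=0$ and hence $h=\mathrm{id}$.

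The proof is thus a pair of non-resonance estimates, and the only point needing care is the ordering: it is the vanishing of the linear part that collapses $h^k$ into the clean form $(x',\,kQ(x')+y)$ and makes the quadratic relation transparent. The main (and only mild) obstacle is checking the two eigenvalue gaps uniformly for $g,h$ in a fixed neighborhood of $(I(k),\mathrm{id})$; both reduce to the elementary fact that $k\ge2$ keeps $k$ bounded away from $1$ and $k^3$ bounded away from $k^2$.
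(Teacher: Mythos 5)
Your proof is correct, but it takes a genuinely different route from the paper's. The paper applies Weil's implicit function theorem to the map $F_2(g,h)=ghg^{-1}h^{-k}$ on $\P_0^2$: its differential at $(\pi_0R^0(a),\mathrm{id})$ in the $h$-direction is $Y\mapsto \mathrm{Ad}(\pi_0R^0(a))Y-kY=(1-k)Y$, because $\mathrm{Ad}(R^0(a))$ acts as the identity on $\p_0$ by the very definition of the weight-zero piece $\p^{(0)}$ of the gradation; since $k\geq 2$ this is injective, and Weil's theorem then forces every nearby solution of $ghg^{-1}=h^k$ to have $h=\mathrm{id}$. You instead solve the group equation directly, using the explicit normal form $F(x',y)=(Ax',\,Q(x')+cy)$ of elements of $\P_0$ (which is the correct description of $\F\cap\F^{(0)}$), and you split the problem into the scalar factor ($c^{k-1}=1$ with $c$ near $1$), the $GL(2n,\R)$ factor (where $\mathrm{Ad}(B)X=kX$ with $\mathrm{Ad}(B)$ near the identity forces $X=0$), and the quadratic part (where $d\,Q\circ B^{-1}=kQ$, with the $B$-action on quadratic forms having spectrum near $k^2$ while $k/d$ is near $k^3$, forces $Q=0$). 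Your computation of $ghg^{-1}=(x',\,dQ(B^{-1}x')+y)$ after the linear part is killed is right, and your three spectral gaps are precisely the invertibility of $\mathrm{Ad}(g)-k$ on the three graded pieces of $\p_0\cong\mathfrak{gl}(2n,\R)\oplus\R\oplus\mathrm{Sym}^2(\R^{2n})^{*}$, unwound by hand. What the paper's formulation buys is that this invertibility is visible in one line from the gradation, with no need for the group law of $\P_0$ or for logarithms; what yours buys is elementarity (no implicit function theorem, no manifold structure on $\P_0$ invoked) and a concrete view of why the resonance fails. The uniformity of your eigenvalue gaps over a fixed neighborhood is immediate since everything is finite-dimensional and $k\geq 2$ is fixed, so there is no gap in the argument.
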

\begin{proof}
To prove this proposition, we will use Weil's implicit function theorem \cite{Weil}:
Let $F_1:(M_0, x_0) \to (M_1, x_1)$, $F_2:(M_1, x_1) \to (M_2, x_2)$ be smooth maps between  smooth manifolds.
If $F_2 \circ F_1$ is the constant map at $x_2$ and
\[
\mathrm{Im}(dF_1)_{x_1} = \mathrm{Ker}(dF_2)_{x_2},
\]
then there is a neighborhood $U$ of $x_0 \in M_0$ such that $F_1(U)$ is a neighborhood of $x_1 \in F_2^{-1}(x_2)$.

Put
\begin{align*}
(M_0, x_0) &= (\P_0, \pi_rR^0(a)), \\
(M_1, x_1) &= (\P_0^2, (\pi_rR^0(a), \mathrm{id})), \\
(M_2, x_2) &= (\P_0, \mathrm{id}),
\end{align*}
and 
\[
F_1(g) = (g, \mathrm{id}),\; F_2(g, h) = g hg^{-1}h^{-k}.
\]
To apply Weil's Implicit function theorem to this setting, it is sufficient to show that $\mathrm{Im}(dF_1)_{x_1} = \mathrm{Ker}(dF_2)_{x_2}$.
Then the claim follows immediately.

Identifying the tangent space $T_{\pi_rR^0(a)}\P_0$ at $\pi_rR^0(a) \in \P_0$ with $T_{\mathrm{id}}\P_0 = \p_0$ by the left translation, it is straightforward to see that
\[
(dF_1)_{x_1}(X) = (X, 0),\; (dF_2)_{x_2}(X, Y) = \mathrm{Ad}(\pi_rR^0(a))(Y) -kY
\]
for $X,Y \in \p_0$.
By $\mathrm{Ad}(\pi_rR^0(a)) = \mathrm{id} \in \mathrm{Aut}(\p_0)$, it follows that $\mathrm{Ker}(dF_2)_{x_2} = \{(X, 0)\}$.
\end{proof}

Recall that $\P_r = \P_0 \ltimes \mathcal{Q}_r$.
By the above proposition, we may assume that $f(\Lambda) \subset \P_r$ is contained in $\mathcal{Q}_r$.
By definition, $\Lambda$ is a lattice of the nilpotent Lie group $N$.
We will use the following fundamental result on lattices in nilpotent Lie groups:
\begin{thm}[\cite{Raghunathan}]\label{nilpext}
Let $N$ and $V$ be two nilpotent simply connected Lie groups, and $H$ a uniform subgroup of $N$.
Then any continuous homomorphism $f:H\to V$ can be extended uniquely to a continuous homomorphism $\bar{f}:N \to V$.
\end{thm}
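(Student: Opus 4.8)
The plan is to transport everything to the Lie algebra level and realize the extension as the integral of a Lie algebra homomorphism. Since $N$ and $V$ are simply connected and nilpotent, the exponential maps $\exp_N\colon\mathfrak{n}\to N$ and $\exp_V\colon\mathfrak{v}\to V$ are diffeomorphisms (with polynomial inverses $\log_N,\log_V$), and continuous homomorphisms $\bar f\colon N\to V$ correspond bijectively to Lie algebra homomorphisms $\psi\colon\mathfrak{n}\to\mathfrak{v}$ via $\bar f=\exp_V\circ\psi\circ\log_N$. Thus it suffices to produce a unique $\mathbb{R}$-linear Lie algebra homomorphism $\psi$ with $\exp_V(\psi(\log_N h))=f(h)$ for all $h\in H$. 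First I would invoke Mal'cev's theorem (recalled in Section~\ref{std}, \cite{Malcev}) to fix a basis $X_1,\dots,X_{m}$ of $\mathfrak{n}$ such that $\exp_N X_1,\dots,\exp_N X_m$ generate the lattice $H$ and the structure constants in $[X_i,X_j]=\sum_k m_{ij}^k X_k$ are rational; in particular the $\mathbb{Q}$-span $\mathfrak{n}_{\mathbb{Q}}$ of $\{X_i\}$ is a $\mathbb{Q}$-form of $\mathfrak{n}$.

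Uniqueness is then immediate: any extension $\bar f$ has differential $\psi$ satisfying $\psi(X_i)=\log_V f(\exp_N X_i)$, so $\psi$ is pinned down on the $\mathbb{R}$-basis $\{X_i\}$ and hence everywhere; two extensions therefore share the same differential and coincide.

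For existence I would define $\psi$ by $\psi(X_i):=Y_i$ with $Y_i:=\log_V f(\exp_N X_i)$ and extend $\mathbb{R}$-linearly, so that the whole problem reduces to checking that $\psi$ respects brackets, i.e. $\sum_k m_{ij}^k Y_k=[Y_i,Y_j]$ for all $i,j$. This is where the group structure enters: the relation $[X_i,X_j]=\sum_k m_{ij}^k X_k$ is the infinitesimal shadow of a commutator identity among the generators $\exp_N X_i$ of $H$; applying the homomorphism $f$ and then $\log_V$, and expanding both sides with the Baker--Campbell--Hausdorff formula (a finite sum by nilpotency), yields an identity in $\mathfrak{v}$ whose component of lowest filtration degree is exactly $[Y_i,Y_j]=\sum_k m_{ij}^k Y_k$, provided the higher-order corrections already agree. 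I would secure the latter by induction on the nilpotency class, along the lower central series $\mathfrak{n}=\mathfrak{n}_1\supset\mathfrak{n}_2\supset\cdots$: on the abelian central top quotient the commutator relations are additive and $\psi$ is visibly a homomorphism, and since group commutators land in deeper terms of the series, each subsequent stage introduces only brackets of strictly higher filtration, which have been matched at the previous stage.

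Once $\psi$ is a genuine Lie algebra homomorphism, $\bar f:=\exp_V\circ\psi\circ\log_N$ is a smooth (polynomial in exponential coordinates) group homomorphism $N\to V$, automatically continuous, and it agrees with $f$ on the generators $\exp_N X_i$, hence on all of $H$. I expect the bracket-matching step to be the main obstacle: the passage from the commutator relations in the lattice $H$ to the Lie bracket identities for $\psi$ is not a formal linear manipulation, because group commutators and Lie brackets differ precisely by the higher Baker--Campbell--Hausdorff terms, and it is the inductive filtration argument that controls these corrections and makes the construction go through.
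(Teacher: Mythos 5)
The paper offers no proof of this statement: it is quoted as a black box from Raghunathan's book (it is his extension theorem for lattices in nilpotent Lie groups), so there is no in-paper argument to compare yours against. Your overall architecture is the standard Mal'cev-rigidity proof and is sound as a skeleton: since $N$ and $V$ are simply connected and nilpotent, $\exp$ is a diffeomorphism and continuous homomorphisms $N\to V$ correspond to Lie algebra homomorphisms; uniqueness is correctly disposed of because the differential of any extension is pinned down on the basis $\{X_i\}$ by $\psi(X_i)=\log_V f(\exp_N X_i)$; and existence reduces to showing that the linearly extended $\psi$ satisfies $[Y_i,Y_j]=\sum_k m_{ij}^k Y_k$.

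That last step, however, is the entire content of the theorem, and your treatment of it has a real gap. The Lie-algebra relation $[X_i,X_j]=\sum_k m_{ij}^kX_k$ (with rational, generally non-integral $m_{ij}^k$) is not itself a relation in $H$; what $f$ transports is a word identity $[\exp X_i,\exp X_j]=w_{ij}(\exp X_k)$ with integer exponents, and the two are linked only through the full BCH expansions of both sides. Your inductive claim that ``each subsequent stage introduces only brackets of strictly higher filtration'' is not automatic: the BCH corrections on the right-hand side include degree-two brackets $[X_k,X_l]$ of the letters of $w_{ij}$, which a priori sit at the same bracket degree as the relation you are trying to establish. To make the filtration bookkeeping close up you need two structural inputs that the proposal never states: (a) a Mal'cev basis adapted to the lower central series, so that the letters of $w_{ij}$ lie in $C^2N$ and hence every correction term on either side lands in $C^3\mathfrak{n}$ (respectively $C^3\mathfrak{v}$ after applying $f$); and (b) the compatibility of the filtrations, namely that $C^jH$ is a lattice in $C^jN$ and $f(C^jH)\subset C^jV$, so that the inductive hypothesis at depth $j$ actually applies to the terms appearing at depth $j+1$. (You also tacitly assume $H$ is discrete --- finitely generated by the $\exp X_i$ --- which is the case used in the paper but should be said, since ``uniform subgroup'' nominally allows non-discrete closed cocompact subgroups.) With (a) and (b) supplied, your BCH-plus-induction scheme does go through and is essentially Raghunathan's argument; without them the key identity $[Y_i,Y_j]=\sum_k m_{ij}^kY_k$ is asserted rather than proved.
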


Thus $f:\Lambda \to \mathcal{Q}_r$ can be uniquely extended to a continuous homomorphisms $\bar{f}:N \to \mathcal{Q}_r$.
Note that $f(a)\bar{f}(g)f(a)^{-1} = \bar{f}(aga^{-1})$ for all $g \in N$, since both sides of the equation are continuous extensions of the homomorphism $\lambda \in \Lambda \mapsto f(a)f(\lambda)f(a)^{-1} = f(a\lambda a^{-1}) \in \mathcal{Q}_r$.
Thus we proved Proposition \ref{extf}.

\subsection{Persistence of the cyclic subgroup}\label{persistcycl}
In this subsection, we will prove Proposition \ref{persistcyclprop}.
Let $f \in \mathrm{Hom}(\Gamma, \P_r)$, $r \geq 2$ be a homomorphism with $\pi_1f \in \mathrm{Hom}(\Gamma, \P_1)$ sufficiently close to $\pi_1R^0$ and $f(a) \in \P_0 \subset \P_r$.
To prove $f(a) = \pi_rR^0(a)  \in \P_0$, we will construct a Lie subalgebra of $\mathfrak{X}(\R^{2n+1})$ on which $f(a), \pi_rR^0(a) \in \P_0 \subset \mathrm{Diff}(\R^{2n+1})$ act in the same way.
The subalgebra of  $\mathfrak{X}(\R^{2n+1})$ is , using the terminology in Section \ref{Heis}, a Heisenberg connection.

Let $\bar{f} \in \mathrm{Hom}(\langle a \rangle \ltimes N, \P_r)$ be the extension of $f$ obtained by Proposition \ref{extf} and $\bar{f}_*:\n \to \mathfrak{q}_r$ the homomorphism of Lie algebras induced by the homomorphism of Lie group $\bar{f}|_N:N \to \mathcal{Q}_r$.
As the linear subspace $\mathfrak{q}_r \subset \mathfrak{X}(\R^{2n+1}, \mathrm{O})$ is not a Lie subalgebra, the linear map $\bar{f}_*:\n \to \mathfrak{X}(\R^{2n+1}, \mathrm{O})$ induced by the inclusion $\mathfrak{q}_r \subset \mathfrak{X}(\R^{2n+1}, \mathrm{O})$ is not necessarily a homomorphism of Lie algebras.
But we can construct a homomorphism from $\n$ into $\mathfrak{X}(\R^{2n+1}, \mathrm{O})$ in the following way.
The $\p^{(s)}(\R^{2n+1}, \mathrm{O})$-component of $X \in \p_r$ will be denoted by $X^{(s)}$.
Recall that $X_1, \dots, X_{2n}, Y \in \n$ is the basis of $\n$ with the relations $[X_i, X_j] = m_{ij}Y$, $[X_i, Y] = 0$ satisfying $b_i = \exp X_i$, $c = \exp Y$ as described in Section \ref{std}.

\begin{lem}\label{Hconnconstruct}
For $r \geq 3$, let $\varphi:\n \to \mathfrak{X}(\R^{2n+1}, \mathrm{O})$ be the linear map defined by 
\[
\varphi(X_i) = \bar{f}_*(X_i)^{(1)}\;(i = 1, \dots, 2n), \;\varphi(Y) = \bar{f}_*(Y)^{(2)}.
\]
Then $\varphi$ is a homomorphism of Lie algebras.
\end{lem}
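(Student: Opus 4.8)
The guiding observation is that in the standard model $R^0$ the generators sit in pure weights: one checks $R^0_*(X_i)\in\p^{(1)}$ and $R^0_*(Y)\in\p^{(2)}$, because $\mathrm{Ad}(a)$ multiplies $X_i$ by $k$ and $Y$ by $k^2$, and this scaling is exactly what the grading $\p^{(s)}$ records. Thus $\varphi$ is the operation of keeping the lowest-weight (leading) part of $\bar f_*$ on each generator, and the claim is that these leading parts already close up into a copy of $\n$. The plan is to prove this by comparing weight components, using two inputs: the gradation of $\mathrm{Poly}(\R^{2n+1},\mathrm{O})=\bigoplus_{r\ge-1}\p^{(r)}$ from Subsection \ref{jets}, so that the genuine bracket satisfies $[\p^{(p)},\p^{(q)}]\subset\p^{(p+q)}$; and the fact that $\bar f_*:\n\to\mathfrak{q}_r$ is a homomorphism for the truncated bracket of $\mathfrak{q}_r$. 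Note that since $f(a)\in\P_0$ gives $\bar f(N)\subset\mathcal{Q}_r$ by Proposition \ref{extf}, only weights $1\le p\le r$ occur in $\bar f_*(\n)$.

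First I would unwind the homomorphism property in terms of weights. Writing $\bar f_*(X)=\sum_{p\ge 1}\bar f_*(X)^{(p)}$, the identity $\bar f_*([X,X'])=[\bar f_*(X),\bar f_*(X')]_{\mathfrak{q}_r}$ splits, component by component, into
\[
\bar f_*([X,X'])^{(s)}=\sum_{\substack{p+q=s\\ p,q\ge 1}}[\bar f_*(X)^{(p)},\bar f_*(X')^{(q)}]
\]
for every $s$ with $2\le s\le r$, the bracket on the right now being the genuine one in $\mathfrak{X}(\R^{2n+1})$; here the truncation in $\mathfrak{q}_r$ is harmless precisely because $s\le r$.

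Next I would dispose of the weight-$1$ part of $\bar f_*(Y)$. Taking the weight-$1$ component of $\bar f_*([X_i,X_j])=[\bar f_*(X_i),\bar f_*(X_j)]_{\mathfrak{q}_r}$, the right-hand side contributes nothing (all its summands have weight $\ge 2$), while the left-hand side equals $m_{ij}\,\bar f_*(Y)^{(1)}$. Since $(m_{ij})_{ij}\in\mathcal{M}$ is nonsingular, some $m_{ij}\ne 0$, and therefore $\bar f_*(Y)^{(1)}=0$.

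Finally I would verify the two defining relations of $\n$. Applying the displayed identity with $s=2$ to $[X_i,X_j]=m_{ij}Y$ gives $m_{ij}\,\bar f_*(Y)^{(2)}=[\bar f_*(X_i)^{(1)},\bar f_*(X_j)^{(1)}]$, i.e. $[\varphi(X_i),\varphi(X_j)]=m_{ij}\varphi(Y)$. Applying it with $s=3$ to $[X_i,Y]=0$ gives
\[
0=[\bar f_*(X_i)^{(1)},\bar f_*(Y)^{(2)}]+[\bar f_*(X_i)^{(2)},\bar f_*(Y)^{(1)}],
\]
and the second term vanishes by the previous step, so $[\varphi(X_i),\varphi(Y)]=0$. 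These are exactly the relations $[X_i,X_j]=m_{ij}Y$ and $[X_i,Y]=0$ defining $\n$ on the basis $X_1,\dots,X_{2n},Y$, so $\varphi$ is a Lie algebra homomorphism. The only place the hypothesis $r\ge 3$ enters is this last step, where I need weight $3\le r$ so that the weight-$3$ bracket is not killed by the truncation in $\mathfrak{q}_r$; handling this truncation correctly, together with the vanishing $\bar f_*(Y)^{(1)}=0$, is the crux of the argument and the main thing to watch.
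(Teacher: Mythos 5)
Your proposal is correct and follows exactly the paper's argument: decompose the homomorphism identity for $\bar f_*$ into weight components of the gradation, deduce $\bar f_*(Y)^{(1)}=0$ from the weight-$1$ component of $[X_i,X_j]=m_{ij}Y$, and then read off the two defining relations of $\n$ from the weight-$2$ and weight-$3$ components (the latter being where $r\ge 3$ is needed). The paper states these three identities without elaboration; your write-up supplies the bookkeeping, including the correct observation that some $m_{ij}\ne 0$ is needed to conclude $\bar f_*(Y)^{(1)}=0$.
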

\begin{proof}
As $\p_r = \bigoplus_{s\leq r}\p^{(s)}(\R^{2n+1}, \mathrm{O})$ is a gradation, and $\bar{f}_*:\n \to \mathfrak{q}_r$ is a homomorphism of Lie algebras, by the relations on $\n$, we see that
\begin{align*}
0 &= \bar{f}_*(Y)^{(1)},\\
[\bar{f}_*(X_i)^{(1)}, \bar{f}_*(X_j)^{(1)}] &= m_{ij}\bar{f}_*(Y)^{(2)}, \\
[\bar{f}_*(X_i)^{(1)}, \bar{f}_*(Y)^{(2)}] &= 0.
\end{align*}
It follows that $\varphi$ is a homomorphism of Lie algebras.
\end{proof}

Observe that both $f(a), \pi_rR^0(a) \in \P_0 \subset \mathrm{Diff}(\R^{2n+1})$ preserve $\varphi(\n)\subset \mathfrak{X}(\R^{2n+1})$ and the restrictions on it coincide.
In fact, by the relations $ab_ia^{-1} = b_i^k$, we see that $f(a)_*\bar{f}_*(X_i) = \mathrm{Ad}(f(a))\bar{f}_*(X_i) = k\bar{f}_*(X_i) \in \p_r$ and thus $f(a)_*\bar{f}_*(X_i)^{(1)} = k\bar{f}_*(X_i)^{(1)}$.
Since $\bar{f}_*(X_i)^{(1)} \in \p^{(1)}$, $\pi_rR^0(a)_*(\bar{f}_*(X_i)^{(1)}) = k\bar{f}_*(X_i)^{(1)}$.

\begin{proof}[Proof of Proposition \ref{persistcycl}]
We will consider the action of the element $g = f(a)^{-1}\,\pi_rR^0(a)\in \P_0 \subset \mathrm{Diff}(\R^{2n+1})$ on $\varphi(\n)\subset \mathfrak{X}(\R^{2n+1})$.
By Lemma \ref{stdalg}, $\phi^0_* \circ \rho^0_*(\n) \subset \mathfrak{X}(\R^{2n+1})$ is a frame field on $\R^{2n+1}\setminus\{\mathrm{O}\}$.
Thus if $\pi_1f$ is sufficiently close to $\pi_1R$, by Lemma \ref{Hconnconstruct}, $\varphi(\n)$ is a Heisenberg connection on a neighborhood of a point $x \in \R^{2n+1}\setminus \{\mathrm{O} \}$.

If $\pi_1f$ is sufficiently close to $\pi_1R$, we may assume that there are neighborhoods $V \subset U$ of $x \in \R^{2n+1}$ such that
\begin{itemize}
\item $\varphi(\n)|_U$ is a Heisenberg connection on $U$, and
\item $g(V) \subset U$.
\end{itemize}
By the above observation, $g$ preserves the Heisenberg connection $\varphi(\n)$.
By Lemma \ref{Hpres} (ii), there is a neighborhood $V' \subset V$ of $x$ and $X \in Z(\varphi(\n))$ such that $g|_{W} = \exp(X)|_{V'}$.

Put $E^0 = \phi^0_* \circ \rho^0_*(E)$.
Note that $g_*E^0 = 0$.
It follows that $[X, E^0] = 0$.
Since $[E^0, X] = rY$ for all $Y \in \p^{(r)}$, $E^0$ is a dilation of $\varphi(\n)$ by $e$.

By Lemma \ref{Hdil} there is a dilation $E$ of $\varphi(\n)$ by $e$ which is also a dilation of $Z(\varphi(\n))$ by $e$.
Since $E^0$ and $E$ are dilations of $\varphi(\n)$ by $e$, we obtain $Y \in Z(\varphi(\n))$ such that $E^0 = E +Y$.
Thus $[X, E + Y] = 0$.
Since $X, Y \in Z(\varphi(\n))$ and $E$ is a dilation of $ Z(\varphi(\n))$, we see that $X = 0$.
Thus $g$ is the identity on $W$.
So $g = \mathrm{id} \in \P_0$.
\end{proof}

\subsection{Persistence of the nilpotent Lie group}\label{persistnilp}
In this section, we will prove Proposition \ref{persistnilpprop}.
As we will see it later, Proposition \ref{persistnilpprop} follows from Proposition \ref{nplushom} below, which claims certain local rigidity of a homomorphism of Lie algebras.
Let us set up notation and terminology to state and prove Proposition \ref{nplushom}.

Consider the homomorphism of Lie algebras $\phi^0_*\circ \rho^0_*:\mathfrak{g} = \mathfrak{su}(n+1, 1) \to \mathfrak{X}(\R^{2n+1})$ defined in Subsection \ref{stdsu}.
For simplicity of notation, put $\iota^0 = \phi^0_*\circ \rho^0_*$.
Recall that $\mathfrak{g}$ admits the gradation $\mathfrak{g} = \bigoplus_{|\lambda| \leq 2, \lambda \in \mathbb{Z}} \mathfrak{g}^{(\lambda)}$.
The graded Lie algebra $\mathfrak{g}$ admits graded Lie subalgebras $\mathfrak{n}^{+} = \mathfrak{g}^{(+1)} \oplus \mathfrak{g}^{(+2)}$ and $\mathfrak{n}^{-} = \mathfrak{g}^{(-1)} \oplus \mathfrak{g}^{(-2)}$.
Note that $\mathfrak{n}^{+}$ is the Lie algebra of $N$, which was denoted by $\mathfrak{n}$.

By the definition of  $\rho^0$ and $\phi^0$, it is easy to see that the image of $\iota^0:\mathfrak{g} \to \mathfrak{X}(\R^{2n+1})$ is contained in the subalgebra $\p \subset \mathfrak{X}(\R^{2n+1}) $ of polynomial vector fields on $\R^{2n+1}$.
Moreover, $\p$ admits a gradation which is compatible with that of $\mathfrak{g}$:
Consider the gradation $\p = \bigoplus_{r \geq -2} \p^{(r)}$, where
\[
\p^{(r)} = \p^{(r)}(\R^{2n+1})= \{X \in\p  \mid \phi^0_*(\rho^0(a))_*X = k^rX \}.
\]
In general, for any graded Lie algebras $\mathfrak{g} = \bigoplus_r \mathfrak{g}^{(r)},\; \mathfrak{h} =\bigoplus_r \mathfrak{h}^{(r)}$, a homomorphism of Lie algebras $f :\mathfrak{g} \to \mathfrak{h}$ is said to be a \textit{homomorphism of graded Lie algebras} if
\[
f(\mathfrak{g}^{(r)}) \subset \mathfrak{h}^{(r)}
\]
for all $r$.
The space of homomorphisms of graded Lie algebras will be denoted by $\mathrm{Hom}_\mathrm{gr}(\mathfrak{g}, \mathfrak{h})$.
By the definition of $\p^{(r)}$,  $\iota^0:\mathfrak{g} \to \p$ is a  homomorphism of graded Lie algebras.

Recall that the group $\P_0$, which is a subgroup of $\mathrm{Diff}(\R^{2n+1})$, acts naturally on $\mathfrak{X}(\R^{2n+1})$.
Note that $\P_0$ preserves $\p \subset \mathfrak{X}(\R^{2n+1})$ and that $\P_0$ acts on $\p$ by homomorphisms of graded Lie algebras.

We can now rephrase Proposition \ref{persistnilpprop} as the following proposition, which is, so to speak, a local rigidity of a homomorphism of graded Lie algebra.
\begin{prop}\label{nplushom}
If $\iota \in \mathrm{Hom}_\mathrm{gr}(\mathfrak{n}^{+}, \p)$ is a homomorphism sufficiently close to $\iota^0 = \iota^0|_{\mathfrak{n}^{+}} \in \mathrm{Hom}_\mathrm{gr}(\mathfrak{n}^{+}, \p)$, then there exists $h \in \P_0$ such that
\[
\iota(\mathfrak{n}^{+}) = h_* \iota^0(\mathfrak{n}^{+}).
\]
\end{prop}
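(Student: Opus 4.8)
The plan is to use the grading to reduce everything to the degree-one part. Write $V_1=\mathfrak g^{(+1)}$ and $V_2=\mathfrak g^{(+2)}$, so that $\mathfrak{n}^{+}=V_1\oplus V_2$ with $V_2$ central and, since the Heisenberg form $\Phi$ is nondegenerate, $[V_1,V_1]=V_2$. Because $\iota$ is a homomorphism of graded Lie algebras, $\iota(V_1)\subset\p^{(1)}$ and $\iota(V_2)\subset\p^{(2)}$, and moreover $\iota(V_2)=\iota([V_1,V_1])=[\iota(V_1),\iota(V_1)]$; hence $\iota(\mathfrak{n}^{+})$ is exactly the Lie subalgebra of $\p$ generated by $\iota(V_1)$, and the same holds for $\iota^0$. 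As the pushforward $h_*$ by any $h\in\P_0$ is an automorphism of $\mathfrak X(\R^{2n+1})$ preserving the grading, it carries the subalgebra generated by $\iota^0(V_1)$ to the subalgebra generated by $h_*\iota^0(V_1)$. Therefore it suffices to produce $h\in\P_0$ with
\[
h_*\iota^0(V_1)=\iota(V_1);
\]
the equality $h_*\iota^0(\mathfrak{n}^{+})=\iota(\mathfrak{n}^{+})$ then follows by taking brackets.

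This turns the proposition into a statement about the finite-dimensional vector space $\p^{(1)}$. Set $W^0=\iota^0(V_1)$ and $W=\iota(V_1)$; these are $2n$-dimensional subspaces of $\p^{(1)}$ that are close to each other in the Grassmannian $\mathrm{Gr}(2n,\p^{(1)})$ once $\iota$ is close to $\iota^0$, and both are \emph{admissible}, i.e.\ $[W,W]$ is one-dimensional and $[W,[W,W]]=0$, so that $W+[W,W]$ is a copy of $\n$. Since $h_*$ preserves admissibility, the $\P_0$-orbit of $W^0$ lies in the admissible locus $\mathcal W\subset\mathrm{Gr}(2n,\p^{(1)})$. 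I would prove, via the inverse function theorem, that this orbit is a neighborhood of $W^0$ in $\mathcal W$; then $W=\iota(V_1)$, being admissible and close to $W^0$, lies in the orbit and yields the desired $h$.

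The structural input organizing the computation is the dilation $E^0=\iota^0(E)=-\sum_{i\le 2n}x_i\partial_i-2x_{2n+1}\partial_{2n+1}$ of Lemma \ref{stdalg}. Because $\mathrm{ad}(E^0)$ acts as a scalar on each graded summand $\p^{(r)}$ (nonzero for $r\neq0$) and $\iota,\iota^0$ are graded, $E^0$ is simultaneously a dilation of $\iota^0(\mathfrak{n}^{+})$ and of $\iota(\mathfrak{n}^{+})$; moreover every element of $\P_0$ commutes with $E^0$, since $\p_0=\p^{(0)}$ is precisely the centralizer of $E^0$ among polynomial vector fields. Openness of the orbit amounts to the surjectivity of the infinitesimal action
\[
\p_0\longrightarrow \mathrm{Hom}\!\left(W^0,\ \p^{(1)}/W^0\right),\qquad Z\longmapsto\bigl(w\mapsto [Z,w]\bmod W^0\bigr),
\]
onto the subspace $T_{W^0}\mathcal W$ of admissible first-order deformations; in cohomological language this is the vanishing of a graded $H^1(\mathfrak{n}^{+};\p)$ relative to the images of $\mathrm{ad}(\p_0)$ and of the graded reparametrizations of $\mathfrak{n}^{+}$. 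I would carry this out by writing down $\p^{(0)}$ and $\p^{(1)}$ explicitly, decomposing $\p^{(1)}$ into isotypic components for the Levi part $\iota^0(\mathfrak g^{(0)})\cong\mathfrak u(n)\oplus\R$, and checking component by component that the linear elements $GL(2n,\R)\times GL(1,\R)\subset\P_0$ together with the quadratic (shear) elements of $\p_0$ already fill the admissible normal directions.

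The main obstacle is exactly this last surjectivity. In contrast to the real case of Asaoka, the space $\p^{(1)}$ for $SU(n+1,1)$ is large and contains summands not reached by the linear part of $\P_0$ alone, so one must genuinely exploit the quadratic elements of $\p_0$ and the constraint $[W,[W,W]]=0$ to cut the deformation space down to the orbit directions. I expect the verification to rest, as the reduction step did, on the nondegeneracy of $\Phi$ and on Schur-type arguments for the $\mathfrak u(n)$-action on the isotypic pieces of $\p^{(1)}$.
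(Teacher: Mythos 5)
There is a genuine gap: the entire mathematical content of the proposition is concentrated in the one step you explicitly defer, namely the surjectivity of the infinitesimal action $\p_0\to \mathrm{Hom}(W^0,\p^{(1)}/W^0)$ onto the tangent space of the admissible locus $\mathcal{W}$. Your reduction to matching the degree-one pieces $\iota(\mathfrak g^{(+1)})$ is fine (since $[\mathfrak g^{(+1)},\mathfrak g^{(+1)}]=\mathfrak g^{(+2)}$ by nondegeneracy of $\Phi$), and the orbit-openness framework is a legitimate way to phrase the problem; but phrasing it this way only restates the proposition as a linear-algebra assertion that you then announce rather than verify. Two concrete holes remain. First, $\mathcal{W}$ is cut out by the determinantal condition $\dim[W,W]\le 1$ together with $[W,[W,W]]=0$, and you have not shown it is smooth at $W^0$ nor identified $T_{W^0}\mathcal{W}$, which the inverse function theorem argument requires. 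Second, and more seriously, the surjectivity onto $T_{W^0}\mathcal{W}$ is exactly the nontrivial cancellation that must be checked; nothing in your outline guarantees that the linear and quadratic elements of $\p_0$ fill all admissible normal directions, and "Schur-type arguments for the $\mathfrak u(n)$-action" is a hope, not a proof.

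For comparison, the paper does not work inside $\p^{(1)}$ and $\p^{(0)}$ alone: it brings in the opposite nilpotent algebra $\n^-=\mathfrak g^{(-1)}\oplus\mathfrak g^{(-2)}$ and the $\mathfrak{sl}_2$-type relations $[F^-,F^+]=E$, $\xi^-=[F^-,(-i\xi)^+]$, $2F^-=\mathrm{ad}(F^-)^2F^+$. Concretely, it first proves a rigidity statement for graded homomorphisms of $\n^-$ (Proposition \ref{nminushom}) via the Heisenberg-connection machinery of Section \ref{Heis} (any such $\iota\n^-$ is a frame field, hence conjugate to the standard one by a diffeomorphism, which is then shown to be polynomial using the dilation $E$); it then transports $\iota$ on $\n^+$ to a graded homomorphism $\Theta\iota$ on $\n^-$ by applying $\mathrm{ad}(\iota^0F^-)$, normalizes so that $\Theta\iota=\iota^0|_{\n^-}$, deduces $[\iota^0F^-,\iota F^+]=E$ and $\iota\mathfrak g^{(+1)}=[\iota^0\mathfrak g^{(-1)},\iota F^+]$, and finally reduces everything to the uniqueness of a single element $X\in\p^{(+2)}$, settled by showing two symmetric multilinear maps $\Phi$ and $\Psi$ vanish. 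That last computation is precisely the surjectivity/vanishing you are missing, and it visibly uses structure ($F^-$, the $\mathbb{C}$-structure via $\xi\mapsto i\xi$) that your plan never invokes. To salvage your approach you would have to carry out the isotypic decomposition of $\p^{(1)}$ and the cokernel computation in full; as written, the proof is not there.
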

\begin{proof}[Proof of Proposition \ref{persistnilpprop} from Proposition \ref{nplushom}]
Let $f \in \mathrm{Hom}(\Gamma, \P_r)$, $r \geq 2$ be a homomorphism with $\pi_1f \in \mathrm{Hom}(\Gamma, \P_1)$ sufficiently close to $\pi_1R^0$ and $f(a)  = \pi_rR^0(a)  \in \P_0 \subset \P_r$, and  $\bar{f} \in \mathrm{Hom}(\langle a \rangle \ltimes N, \P_r)$ the extension of $f$ obtained by Proposition \ref{extf}.
Our goal is to find an element $h \in \P_0$ satisfying $h\bar{f}(N)h^{-1} = \pi_rR^0(N)$.

Since $f(a) = \pi_r R^0(a)$, we see that $\bar{f}_*:\n^+ \to \p_r$ is a homomorphism of graded Lie algebras and that the map $\bar{f}_*:\n^+ \to \p$ induced by the inclusion $\p_r \subset \p$ is also a homomorphism of graded Lie algebras.
Applying Proposition \ref{nplushom} to $\iota = \bar{f}_*:\n^+ \to \p$, we obtain $h \in \P_0$ such that $\bar{f}_*(\mathfrak{n}^+) = h_* \iota^0(\mathfrak{n}^+)$.
Replacing $f$ by $h^{-1}fh$, we may assume $\bar{f}_*(\mathfrak{n}^+) =  \iota^0(\mathfrak{n}^+)$.
As $R^0_* = \iota^0:\n^+ \to \p$, we see $\bar{f}_*(\n^+) = (\pi_r R^0)_*(\n^+)$.
Since $\mathcal{Q}_r$ is a connected simply-connected nilpotent Lie group, it follows that $\bar{f}(N) = \pi_rR^0(N)$.
\end{proof}
Before beginning the proof of Proposition \ref{nplushom}, we will prove the following:
\begin{prop}\label{nminushom}
If $\iota \in \mathrm{Hom_{gr}}(\mathfrak{n}^{-}, \p)$ is sufficiently close to $\iota^0 = \iota^0 |_{\mathfrak{n}^{-}}\in \mathrm{Hom_{gr}}(\mathfrak{n}^{-}, \p)$, then there is an element $h \in \P_0$ close to the identity such that $\iota^0 = h_* \circ \iota$.
\end{prop}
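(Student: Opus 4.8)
The plan is to read Proposition \ref{nminushom} as local rigidity of the point $\iota^0$ under the action of $\P_0$ on the space of graded homomorphisms, and to prove it by showing that the $\P_0$-orbit of $\iota^0$ is open. First I would observe that a graded homomorphism $\iota\colon \mathfrak{n}^{-}\to\p$ is completely determined by the two linear maps $\iota|_{\mathfrak{g}^{(-1)}}\colon\mathfrak{g}^{(-1)}\to\p^{(-1)}$ and $\iota|_{\mathfrak{g}^{(-2)}}\colon\mathfrak{g}^{(-2)}\to\p^{(-2)}$, subject to the bracket relations of $\mathfrak{n}^{-}$. A direct count (as in Lemma \ref{stdalg}) shows that $\p^{(-1)}$ is spanned by $\partial_j$ and $x_l\partial_{2n+1}$ for $1\le j,l\le 2n$, and that $\p^{(-2)}=\R\partial_{2n+1}$; in particular both are finite-dimensional, so $\mathrm{Hom}_{\mathrm{gr}}(\mathfrak{n}^{-},\p)$ is a finite-dimensional real-algebraic variety on which $\P_0$ acts smoothly by $h\cdot\iota=h_*\circ\iota$. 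Granting that $\P_0\cdot\iota^0$ contains a neighborhood of $\iota^0$, any $\iota$ close to $\iota^0$ equals $h_*\iota^0$ for some $h\in\P_0$ close to the identity, and then $h'=h^{-1}\in\P_0$ satisfies $\iota^0=h'_*\circ\iota$, which is the assertion.

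To establish openness I would compute the differential at the identity of the orbit map $o\colon\P_0\to\mathrm{Hom}_{\mathrm{gr}}(\mathfrak{n}^{-},\p)$, $h\mapsto h_*\iota^0$. The tangent space $T_{\iota^0}\mathrm{Hom}_{\mathrm{gr}}(\mathfrak{n}^{-},\p)$ is the space $Z^1_{\mathrm{gr}}(\mathfrak{n}^{-},\p)$ of graded $1$-cocycles for the $\mathfrak{n}^{-}$-module structure on $\p$ given by $X\cdot W=[\iota^0(X),W]$, and $(do)_{\mathrm{id}}$ is the coboundary map $\p^{(0)}\to Z^1_{\mathrm{gr}}(\mathfrak{n}^{-},\p)$, $V\mapsto(X\mapsto[\iota^0(X),V])$; note that weight $0$ of $V$ is precisely what forces this coboundary to be graded, since $[\iota^0(X),V]\in\p^{(-s+q)}$ for $X\in\mathfrak{g}^{(-s)}$ and $V\in\p^{(q)}$. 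Hence the orbit tangent space at $\iota^0$ equals the graded coboundaries $B^1_{\mathrm{gr}}$, and it suffices to prove the infinitesimal rigidity $B^1_{\mathrm{gr}}=Z^1_{\mathrm{gr}}$, i.e. $H^1_{\mathrm{gr}}(\mathfrak{n}^{-},\p)=0$: then $T_{\iota^0}(\P_0\cdot\iota^0)=Z^1_{\mathrm{gr}}\supseteq T_{\iota^0}\mathrm{Hom}_{\mathrm{gr}}(\mathfrak{n}^{-},\p)$, so the orbit, being contained in the variety with tangent space containing the variety's, is open near $\iota^0$.

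The heart of the argument, and the step I expect to be the main obstacle, is this cohomology vanishing, which I would verify by hand using the explicit frame of Lemma \ref{stdalg}. Writing a graded cocycle as $u(\xi)=\sum_j A_j(\xi)\partial_j+L(\xi)\partial_{2n+1}\in\p^{(-1)}$ for $\xi\in\mathfrak{g}^{(-1)}$ and $u(\tau)\in\R\partial_{2n+1}$ for $\tau\in\mathfrak{g}^{(-2)}$, I would solve $[\iota^0(\,\cdot\,),V]=u$ by choosing $V\in\p^{(0)}$ from its three natural blocks: the block $\mathfrak{gl}(2n,\R)$ produces the constant ``frame'' terms and so matches $A(\xi)$; the quadratic block $\langle x_ax_b\partial_{2n+1}\rangle$ produces exactly the terms $x\partial_{2n+1}$, since $[\partial_c,x_ax_b\partial_{2n+1}]=(\delta_{ca}x_b+\delta_{cb}x_a)\partial_{2n+1}$, and so matches $L(\xi)$; and $x_{2n+1}\partial_{2n+1}$ rescales the value on $\mathfrak{g}^{(-2)}$. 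The delicate point is that these choices are coupled by the cocycle identity $u([\xi,\xi'])=[\iota^0(\xi),u(\xi')]-[\iota^0(\xi'),u(\xi)]$, which links the $\mathfrak{g}^{(-1)}$- and $\mathfrak{g}^{(-2)}$-data; one must check, using nondegeneracy of the Heisenberg form $\Phi$ (equivalently $[\mathfrak{g}^{(-1)},\mathfrak{g}^{(-1)}]=\mathfrak{g}^{(-2)}$), that the $V$ solving the weight $(-1)$ equations automatically solves the weight $(-2)$ equation, so that no obstruction survives. Once surjectivity of $(do)_{\mathrm{id}}$ is established, the openness of the orbit yields the desired $h\in\P_0$ close to the identity.
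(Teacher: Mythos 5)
Your proposal is correct, but it takes a genuinely different route from the paper. The paper's proof is geometric: since $\iota\,\mathfrak{n}^-$ is a frame field near $\mathrm{O}$, it is a Heisenberg connection, so the machinery of Section \ref{Heis} produces an a priori merely smooth diffeomorphism $f$ fixing $\mathrm{O}$ with $f_*(\iota X)=\iota^0X$; the real work is then to show $f\in\P_0$, which is done by comparing dilations to get $f_*(\iota^0E)=\iota^0E$ and proving that any germ $g$ with $(\iota^0E)g=mg$ is polynomial. Your argument is instead the Weil-type infinitesimal-rigidity computation, and it does go through: the graded pieces $\p^{(-1)}$, $\p^{(-2)}$, $\p^{(0)}$ are finite dimensional exactly as you describe, and the differential of the orbit map is the coboundary $\p^{(0)}\to Z^1_{\mathrm{gr}}(\mathfrak{n}^-,\p)$. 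Two points to tighten. First, to pass from $Z^1_{\mathrm{gr}}=B^1_{\mathrm{gr}}$ to openness of the orbit inside the (possibly singular) variety $\mathrm{Hom}_{\mathrm{gr}}(\mathfrak{n}^-,\p)$ you should invoke Weil's implicit function theorem exactly as the paper does in Subsection \ref{exthomtoLie}, with $F_1$ the orbit map and $F_2(\iota)\colon(X,Y)\mapsto\iota[X,Y]-[\iota X,\iota Y]$; your tangent-space phrasing is informally doing this. Second, the step you defer does close up. Write $\omega(\xi,\eta)=2\,\mathrm{Im}(\overline{\xi}^T\eta)$, so $[\xi^-,\eta^-]=\omega(\xi,\eta)F^-$ and $\iota^0\xi^-=\sum_j\xi_j\partial_j-\tfrac12\omega(\xi,x)\partial_{2n+1}$, $\iota^0F^-=\partial_{2n+1}$. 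For a cochain $u(\xi^-)=\sum_jA_j(\xi)\partial_j+\langle L(\xi),x\rangle\partial_{2n+1}$, $u(F^-)=\mu\,\partial_{2n+1}$, and for $V=\sum_{j}(Mx)_j\partial_j+\bigl(b(x,x)+sx_{2n+1}\bigr)\partial_{2n+1}\in\p^{(0)}$ one computes $[\iota^0F^-,V]=s\,\partial_{2n+1}$ and
\[
[\iota^0\xi^-,V]=\sum_j(M\xi)_j\partial_j+\Bigl(2b(\xi,x)+\tfrac12\omega(\xi,Mx)-\tfrac{s}{2}\omega(\xi,x)\Bigr)\partial_{2n+1},
\]
so after setting $M=A$ and $s=\mu$ the only question is whether the bilinear form $C(\xi,x)=\langle L(\xi),x\rangle-\tfrac12\omega(\xi,Ax)+\tfrac{\mu}{2}\omega(\xi,x)$ is symmetric; its antisymmetric part is precisely the cocycle identity on $\Lambda^2\mathfrak{g}^{(-1)}$, so it vanishes and $b=\tfrac12C$ solves the problem. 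A dimension count corroborates this: $Z(\iota^0\mathfrak{n}^-)\cap\p^{(0)}=0$, so $\dim B^1_{\mathrm{gr}}=\dim\p^{(0)}=6n^2+n+1$, which equals $(8n^2+1)-\binom{2n}{2}$, the expected dimension of $Z^1_{\mathrm{gr}}$. What each approach buys: the paper's reuses the Heisenberg-space lemmas it has already established and avoids all cocycle computations, while yours is self-contained linear algebra once the graded pieces are listed and in addition yields local uniqueness of $h$ from $H^0_{\mathrm{gr}}=0$.
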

\begin{proof}
If $\iota$ is sufficiently close to $\iota^0 \in \mathrm{Hom_{gr}}(\mathfrak{n}^{-}, \p)$, we may assume that $\iota\mathfrak{n}^- \subset \mathfrak{X}(\R^{2n+1})$ is a frame field around $\mathrm{O} \in \R^{2n+1}$.
Then $\iota\n^-$ is a Heisenberg connection around $\mathrm{O} \in \R^{2n+1}$.
Thus there is a unique diffeomorphism $f:\R^{2n+1} \to \R^{2n+1}$ such that $f(\mathrm{O}) = \mathrm{O}$ and that $f_*(\iota X)= \iota^0X$ for all $X \in \mathfrak{n}^-$.
We will show that such a diffeomorphism $f$ must be an element of $\P_0 \subset \mathrm{Diff}(\R^{2n+1})$.

Since $\iota$ preserves the gradations, $\iota E$ is a dilation of $\iota \n^-$ by $e$.
As $\iota^0E$ is a dilation of $\iota^0 \n^-$ by $e$, $f_*(\iota^0E)$ is also a dilation of $\iota \n^-$ by $e$.
Thus $\iota E - f_*(\iota^0E) \in Z(\iota \n^-)$.
Using $f(\mathrm{O}) = \mathrm{O}$, we see that $\iota E = f_*(\iota^0E)$.

It remains to show that a diffeomorphism $f$ defined around $\mathrm{O} \in \R^{2n+1}$ fixing $\mathrm{O}$ with $f_*\iota^0E = \iota^0E$ must be in $\P_0$.
Set $f(x) = (f_1(x), \dots, f_{2n+1}(x))\in \R^{2n+1}$.
By $f_*\iota^0E =\iota^0E$,
\[
(\iota^0E)f_i = -f_i\,(1 \leq i \leq 2n), \;(\iota^0E)f_{2n+1} = -2f_{2n+1}.
\]
Thus it is sufficient to show that a smooth function $g$ defined around $\mathrm{O}\in \R^{2n+1}$ with $(\iota^0E)g = mg$, $m \in \mathbb{Z}$ is polynomial.
Recall that $\iota^0E = -x_1\partial_1 - \dots -x_{2n}\partial_{2n} - 2x_{2n+1}\partial_{2n+1} \in \p^{(0)}$.
Observe that $\lim_{t \to \infty}\gamma(t) = \mathrm{O} \in \R^{2n+1}$ for any integral curves $\gamma(t)$ of $\iota^0E$.
It follows that $g= 0$ around $\mathrm{O} \in \R^{2n+1}$ if $m > 0$ and that $g$ is constant around $\mathrm{O} \in \R^{2n+1}$ if $m = 0$.
If $m < 0$, since $[\iota^0E, \partial_i] = \partial_i$, $[\iota^0E, \partial_{2n+1}] = 2\partial_{2n+1}$,
\[
(\iota^0E)\partial_ig = (m+1)\partial_ig, \; (\iota^0E)\partial_{2n+1}g = (m+2)\partial_{2n+1}g
\]
for $1 \leq i \leq 2n$.
Thus we see that $g$ is polynomial function by induction on $m$.
\end{proof}

Let us prove Proposition \ref{nplushom}.
To reduce the notation, put
\[
E = \left(
\begin{array}{ccc}
	1 & 0&0\\
	0& 0 &0\\
	0&0&-1
\end{array} \right),
F^- = \left(
\begin{array}{ccc}
	0 & 0&0\\
	0& 0 &0\\
	-i&0&0
\end{array} \right),
F^+ = \left(
\begin{array}{ccc}
	0 & 0&-i\\
	0& 0 &0\\
	0&0&0
\end{array} \right),
\]
\[
\xi^- = \left(
\begin{array}{ccc}
	0 & 0& 0\\
	\xi& 0 &0\\
	0&-\overline{\xi}^{T}&0
\end{array} \right), \xi^+ =  \left(
\begin{array}{ccc}
	0 & -\overline{\xi}^{T}& 0\\
	0& 0 &\xi\\
	0&0&0
\end{array} \right) \;(\xi \in \mathbb{C}^n).
\]
Then the following relations are immediate:
\[
[\xi^+, \eta^+] = 2\mathrm{Im}(\overline{\xi}^T\eta)F^+,\; [\xi^-, \eta^-] = 2\mathrm{Im}(\overline{\xi}^T\eta)F^-,
\]
\[
[F^-, \xi^+] = (i\xi)^-,\;[F^+, \xi^-] = (i\xi)^+, \;[F^-, F^+] = E.
\]
Observe that $\xi^- = [F^-, (-i\xi)^+]$ and $2F^- = \mathrm{ad}(F^-)^2  F^+$.
Then we can construct a homomorphism of graded Lie subalgebras from $\n^-$ into $\p$ associated with $\iota \in \mathrm{Hom}_\mathrm{gr}(\mathfrak{n}^{+}, \p)$:
\begin{lem}\label{Theta}
For $\iota \in \mathrm{Hom}_\mathrm{gr}(\mathfrak{n}^{+}, \p)$, the linear map $\Theta \iota:\mathfrak{n}^- = \mathfrak{g}^{(-2)}\oplus\mathfrak{g}^{(-1)} \to \p$ defined by
\[
(\Theta \iota)(\xi^-) = \mathrm{ad}(\iota^0F^-)\iota(-i\xi)^+,\; (\Theta \iota)(F^-) = \frac{1}{2}\mathrm{ad}(\iota^0F^-)^2\iota F^+
\]
is a homomorphism of graded Lie algebras.
Moreover, if $\iota \in \mathrm{Hom}_\mathrm{gr}(\mathfrak{n}^{+}, \p)$ is sufficiently close to $\iota^0|_{\n^+} \in \mathrm{Hom}_\mathrm{gr}(\mathfrak{n}^{+}, \p)$,  then  $\Theta\iota \in \mathrm{Hom}_\mathrm{gr}(\mathfrak{n}^{-}, \p)$ is close to $\iota^0|_{\n^-} \in \mathrm{Hom}_\mathrm{gr}(\mathfrak{n}^{-}, \p)$.
\end{lem}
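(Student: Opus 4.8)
The plan is to verify directly that $\Theta\iota$ respects both the grading and the bracket of $\n^-$, and then to deduce the closeness statement from continuity. Throughout, write $L = \mathrm{ad}(\iota^0 F^-)$ for the fixed derivation of $\p$ coming from the standard model; since $\iota^0 F^- \in \p^{(-2)}$, this $L$ lowers the grading by $2$. Grading preservation of $\Theta\iota$ is then immediate: because $\iota$ is graded, $\iota(-i\xi)^+ \in \p^{(+1)}$ and $\iota F^+ \in \p^{(+2)}$, so $(\Theta\iota)(\xi^-) = L\iota(-i\xi)^+ \in \p^{(-1)}$ and $(\Theta\iota)(F^-) = \tfrac{1}{2}L^2\iota F^+ \in \p^{(-2)}$, matching the degrees of $\xi^-$ and $F^-$.

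For the bracket, I would first observe that among the defining relations of $\n^-$ only $[\xi^-, \eta^-] = 2\Im(\overline{\xi}^T\eta)F^-$ is nontrivial: the brackets $[\xi^-, F^-]$ and $[F^-, F^-]$ vanish in $\n^-$, while on the image side the corresponding brackets land in $[\p^{(-1)}, \p^{(-2)}] \subset \p^{(-3)}$ and in $\p^{(-4)}$, both of which are zero because $\p = \bigoplus_{r \geq -2}\p^{(r)}$. Hence the only identity to check is $[(\Theta\iota)\xi^-, (\Theta\iota)\eta^-] = \Im(\overline{\xi}^T\eta)\,L^2\iota F^+$. Setting $A = \iota(-i\xi)^+$ and $B = \iota(-i\eta)^+$ in $\p^{(+1)}$, the homomorphism property of $\iota$ gives $[A, B] = 2\Im(\overline{\xi}^T\eta)\iota F^+$, so the identity reduces to the purely algebraic claim
\[
[LA, LB] = \tfrac{1}{2}L^2[A, B], \qquad A, B \in \p^{(+1)}.
\]

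This last claim is where the grading bound does the real work, and I expect it to be the main (essentially the only) conceptual point. Expanding $L^2[A,B]$ with the Leibniz rule for the derivation $L$ gives $L^2[A,B] = [L^2A, B] + 2[LA, LB] + [A, L^2B]$; since $A, B \in \p^{(+1)}$ and $L$ lowers degree by $2$, both $L^2A$ and $L^2B$ lie in $\p^{(-3)} = 0$, so the two outer terms vanish and the desired identity follows at once. The truncation $\p^{(-3)} = 0$ is exactly what forces $L$ to act on degree-one elements like the model lowering operator, and it makes $\Theta\iota$ a genuine graded homomorphism for \emph{every} $\iota \in \mathrm{Hom}_\mathrm{gr}(\n^+, \p)$, not merely for those near $\iota^0$.

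Finally, for the \textit{moreover} clause I would note that $\Theta$ is continuous—indeed linear—in $\iota$, since it is assembled from the fixed operator $L$ applied to the values of $\iota$ on a basis. Using the homomorphism property of $\iota^0$ together with the identities $\xi^- = [F^-, (-i\xi)^+]$ and $2F^- = \mathrm{ad}(F^-)^2 F^+$ recorded just above, one computes directly that $\Theta(\iota^0|_{\n^+}) = \iota^0|_{\n^-}$. Continuity of $\Theta$ then carries closeness of $\iota$ to $\iota^0|_{\n^+}$ into closeness of $\Theta\iota$ to $\iota^0|_{\n^-}$, which completes the lemma.
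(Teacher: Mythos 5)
Your proposal is correct and follows essentially the same route as the paper: grading preservation from $\iota^0 F^-\in\p^{(-2)}$, the bracket identity obtained by applying $\mathrm{ad}(\iota^0F^-)^2$ to $[\iota(-i\xi)^+,\iota(-i\eta)^+]=2\mathrm{Im}(\overline{\xi}^T\eta)\iota F^+$ with the outer Leibniz terms killed by $\p^{(-3)}=0$, and closeness from linearity of $\Theta$ in $\iota$ together with $\Theta(\iota^0|_{\n^+})=\iota^0|_{\n^-}$. You merely make explicit two points the paper leaves implicit, namely why the remaining defining relations of $\n^-$ are automatic and why the cross terms in the Leibniz expansion vanish.
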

\begin{proof}
By the relations  $\xi^- = [F^-, (-i\xi)^+]$ and $2F^- = \mathrm{ad}(F^-)^2  F^+$, we see that $\Theta \iota$ is close to $\iota^0$ as a linear map from $\mathfrak{n}^-$ to $\p$.
Since $(\Theta \iota)(\mathfrak{g}^{(+1)}) \subset [\p^{(-2)}, \p^{(+1)}] = \p^{(-1)}$ and $(\Theta \iota)(\mathfrak{g}^{(+2)}) \subset [\p^{(-2)},[\p^{(-2)}, \p^{(+1)}]] = \p^{(-2)}$, we see that $\Theta \iota$ preserves the gradations.
Applying $\mathrm{ad}(\iota^0F^-)^2$ to the equation
\[
[\iota(-i\xi)^+, \iota(-i\eta)^+] = 2\mathrm{Im}(\overline{\xi}^T\eta)\iota F^+,
\]
we obtain
\[
2[\mathrm{ad}(\iota^0F^-)\iota(-i\xi)^+, \mathrm{ad}(\iota^0F^-)\iota(-i\eta)^+] = 2\mathrm{Im}(\overline{\xi}^T\eta)\mathrm{ad}(\iota^0F^-)^2\iota F^+.
\]
It follows that $\Theta \iota$ is a homomorphism of Lie algebras.
\end{proof}

\begin{prop}
If a homomorphism $\iota \in \mathrm{Hom}_\mathrm{gr}(\mathfrak{n}^{+}, \p)$ is sufficiently close to $\iota^0|_{\n^+} \in \mathrm{Hom}_\mathrm{gr}(\mathfrak{n}^{+}, \p)$, then  there exist an element $h \in \P_0$ close to the identity such that $\iota^0|_{\n^-} = \Theta(h_*\circ  \iota) \in \mathrm{Hom}_\mathrm{gr}(\mathfrak{n}^{-}, \p)$.
\end{prop}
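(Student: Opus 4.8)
The plan is to deduce this from Proposition~\ref{nminushom} and Lemma~\ref{Theta} by exploiting a \emph{twisted} equivariance of the operator $\Theta$ under the action of $\P_0$, and then to absorb the resulting discrepancy by a dilation lying inside $\P_0$. First I would record how $\P_0$ acts on the distinguished field $\iota^0F^-$. By Lemma~\ref{stdalg}, $\iota^0F^-$ is a nonzero multiple of $\partial_{2n+1}$, and every $h\in\P_0$ has last component of the form $\alpha x_{2n+1}+(\text{quadratic in }x_1,\dots,x_{2n})$; a direct differentiation then gives $h_*\iota^0F^-=\mu(h)\,\iota^0F^-$, where $\mu(h)=\alpha$ depends only on the linear part of $h$. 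Since $h_*$ is always a Lie-algebra automorphism of $\p$, the assignment $h\mapsto\mu(h)$ is a continuous character $\mu\colon\P_0\to\R^{\times}$ with $\mu(\mathrm{id})=1$.

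Next I would establish the twisted equivariance
\[
h_*\circ(\Theta\iota)=\Theta(h_*\iota)\circ\delta_{\mu(h)}\qquad(h\in\P_0),
\]
where $\delta_\lambda$ denotes the dilation of $\mathfrak{n}^-$ scaling $\mathfrak{g}^{(-1)}$ by $\lambda$ and $\mathfrak{g}^{(-2)}$ by $\lambda^2$. This is immediate from the two defining formulas in Lemma~\ref{Theta}: applying $h_*$ and using $h_*[\,X,Y\,]=[\,h_*X,h_*Y\,]$ together with $h_*\iota^0F^-=\mu(h)\iota^0F^-$ pulls out one factor $\mu(h)$ from $\mathrm{ad}(\iota^0F^-)$ on the $\mathfrak{g}^{(-1)}$-part and two factors from $\mathrm{ad}(\iota^0F^-)^2$ on the $\mathfrak{g}^{(-2)}$-part, which is exactly the effect of $\delta_{\mu(h)}$. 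Note that $\Theta$ fails to be genuinely $\P_0$-equivariant precisely because its definition pins down the fixed field $\iota^0F^-$ rather than $h_*\iota^0F^-$.

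Now I would realize the corrective dilation inside $\P_0$. The flow of $\iota^0E$ furnishes a one-parameter subgroup $\{D_\lambda\}_{\lambda>0}\subset\P_0$ with $(D_\lambda)_*\circ\iota^0|_{\mathfrak{n}^-}=\iota^0|_{\mathfrak{n}^-}\circ\delta_\lambda$; evaluating on $F^-\in\mathfrak{g}^{(-2)}$ forces $\mu(D_\lambda)=\lambda^2$. With these tools the assembly is short. By Lemma~\ref{Theta}, $\Theta\iota$ is close to $\iota^0|_{\mathfrak{n}^-}$, so Proposition~\ref{nminushom} gives $h_0\in\P_0$ near the identity with $(h_0)_*\circ\Theta\iota=\iota^0|_{\mathfrak{n}^-}$; write $\mu_0=\mu(h_0)$, which is close to $1$. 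Set $h=D_{\mu_0^{-1}}h_0\in\P_0$, still near the identity. Then $\mu(h)=\mu(D_{\mu_0^{-1}})\,\mu_0=\mu_0^{-1}$, and the twisted equivariance (rearranged as $\Theta(h_*\iota)=h_*(\Theta\iota)\circ\delta_{\mu(h)}^{-1}$) yields
\[
\Theta(h_*\iota)=\bigl((D_{\mu_0^{-1}})_*\,\iota^0|_{\mathfrak{n}^-}\bigr)\circ\delta_{\mu_0}=\iota^0|_{\mathfrak{n}^-}\circ\delta_{\mu_0^{-1}}\circ\delta_{\mu_0}=\iota^0|_{\mathfrak{n}^-},
\]
which is the asserted identity, with $h$ close to the identity.

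The main obstacle is conceptual rather than computational: recognizing that $\Theta$ is equivariant only up to the dilation $\delta_{\mu(h)}$, isolating the scalar $\mu(h)$ as a character of $\P_0$, and observing that the one-parameter group $D_\lambda$ realizing $\delta_\lambda$ is itself available inside $\P_0$ with bookkeeping value $\mu(D_\lambda)=\lambda^2$, so that the single scalar equation $\mu(h)=\mu_0^{-1}$ can be solved. Once this structure is seen, every remaining step reduces to the linearity of the maps involved and the group law $\delta_a\circ\delta_b=\delta_{ab}$.
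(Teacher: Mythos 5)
Your proposal is correct and follows essentially the same route as the paper: apply Proposition \ref{nminushom} to $\Theta\iota$ (which Lemma \ref{Theta} guarantees is close to $\iota^0|_{\mathfrak{n}^-}$), observe that the resulting conjugating element scales $\iota^0F^-$ by a scalar close to $1$ so that $\Theta$ is equivariant only up to the corresponding dilation of $\mathfrak{n}^-$, and cancel that scalar by composing with an element of the one-parameter subgroup $\exp(\R E)\subset\P_0$. Your packaging of the discrepancy as a character $\mu$ on $\P_0$ together with the twisted equivariance formula is just a more explicit form of the paper's computation, so there is nothing further to add.
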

\begin{proof}
By Lemma \ref{Theta} and Proposition \ref{nminushom}, there is an element $h \in \P_0$ such that $\iota^0 = h_* (\Theta \iota)$.
Then
\begin{align*}
\iota^0(\xi^-) &= \mathrm{ad}(h_*\iota^0 F^-) (h_* \circ \iota)(-i\xi^+), \\
\iota^0(F^-) &= \mathrm{ad}(h_*\iota^0 F^-)^2 (h_* \circ \iota)(F^+).
\end{align*}
There is $t \in \R$ satisfying $h_*\iota^0 F^- = e^t \iota^0F^-$.
Consider an element $h' \in \exp(\R E) \subset \P_0$ such that $h'_*(X) = e^{\lambda t}X$ for all $X \in \mathfrak{g}^{(\lambda)}$.
Then we see that $\Theta((h'h)_*\circ\iota) = \iota^0$.
\end{proof}

Thus replacing $\iota$ by $h_* \circ \iota$ obtained by the above proposition, we may assume that $\Theta \iota = \iota^0|_{\n^-} \in  \mathrm{Hom_{gr}}(\mathfrak{n}^{-}, \p)$.

\begin{lem}
If $\iota \in \mathrm{Hom}_\mathrm{gr}(\mathfrak{n}^{+}, \p)$ is sufficiently close to the homomorphism $\iota^0|_{\n^+} \in \mathrm{Hom}_\mathrm{gr}(\mathfrak{n}^{+}, \p)$, and $\Theta \iota = \iota^0|_{\n^-} \in  \mathrm{Hom}_\mathrm{gr}(\mathfrak{n}^{-}, \p)$, then 
\begin{itemize}
\item $[\iota^0F^-, \iota F^+] = E$,
\item $\iota\mathfrak{g}^{(+1)} = [\iota^0\mathfrak{g}^{(-1)}, \iota \mathfrak{g}^{(+2)}]$.
\end{itemize}
\end{lem}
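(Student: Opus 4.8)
The plan is to establish the two bullets in turn, the first being the heart of the matter and the second following from it by a short bracket manipulation. Throughout I write $X := [\iota^0 F^-, \iota F^+]$; since both $\iota$ and $\iota^0$ are graded, $\iota^0 F^- \in \p^{(-2)}$ and $\iota F^+ \in \p^{(+2)}$, so $X \in \p^{(0)}$ and in particular $X$ vanishes at $\mathrm{O}$. The hypothesis $\Theta\iota = \iota^0|_{\mathfrak{n}^-}$ unwinds, via the definition of $\Theta$ in Lemma \ref{Theta}, into two usable identities: first $[\iota^0 F^-, \iota\zeta^+] = \iota^0(i\zeta)^-$ for every $\zeta \in \mathbb{C}^n$, and second $[\iota^0 F^-, X] = 2\iota^0 F^-$. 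The goal for the first bullet (reading $E$ as its image $\iota^0 E$) is to show that $\mathrm{ad}(X)$ and $\mathrm{ad}(\iota^0 E)$ coincide on the standard Heisenberg connection $\iota^0\mathfrak{n}^-$, and then to invoke rigidity of dilations to conclude $X = \iota^0 E$.

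The crucial step is the identity $[X, \iota^0\xi^-] = -\iota^0\xi^-$, which I would obtain by a self-referential Jacobi computation. Writing $\iota^0\xi^- = [\iota^0 F^-, \iota\zeta^+]$ with $\zeta = -i\xi$ and expanding $[X, [\iota^0 F^-, \iota\zeta^+]]$ by the Jacobi identity, one term produces $-2\iota^0\xi^-$ via $[X, \iota^0 F^-] = -2\iota^0 F^-$, while the other term, after using $[\iota F^+, \iota\zeta^+] = 0$ (because $\iota$ is a homomorphism and $[\mathfrak{g}^{(+2)}, \mathfrak{g}^{(+1)}] = 0$) and $[\iota^0 F^-, \iota^0\xi^-] = 0$ (because $[\mathfrak{g}^{(-2)}, \mathfrak{g}^{(-1)}] = 0$), collapses back to $-[X, \iota^0\xi^-]$. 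This gives $[X, \iota^0\xi^-] = -2\iota^0\xi^- - [X, \iota^0\xi^-]$, hence $[X, \iota^0\xi^-] = -\iota^0\xi^-$. I expect this self-referential collapse to be the main obstacle: it is the one point where the argument must close on itself, and it has to be driven purely by the structural relations, since no explicit formula for $\iota F^+$ is available.

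Together with $[X, \iota^0 F^-] = -2\iota^0 F^-$, this shows that $\mathrm{ad}(X)$ preserves $\iota^0\mathfrak{n}^-$ and acts there exactly as $\mathrm{ad}(\iota^0 E)$, so both $X$ and $\iota^0 E$ are dilations of $\iota^0\mathfrak{n}^-$ by the same factor. By the remark following the definition of a dilation, $X - \iota^0 E \in Z(\iota^0\mathfrak{n}^-)$. Now $\iota^0\mathfrak{n}^-$ is the standard Heisenberg connection by Lemma \ref{stdalg}, so Corollary \ref{Hconnpres} gives that its centralizer $Z(\iota^0\mathfrak{n}^-)$ is again a Heisenberg connection, hence a frame field on $\R^{2n+1}$; a nonzero element of a frame field vanishes nowhere, yet $X - \iota^0 E$ vanishes at $\mathrm{O}$. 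Therefore $X = \iota^0 E$, which is the first bullet.

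For the second bullet I would compute, for each $\xi$, the bracket $[\iota^0\xi^-, \iota F^+]$ by substituting $\iota^0\xi^- = [\iota^0 F^-, \iota((-i\xi)^+)]$ and applying Jacobi. The contribution of $[\iota((-i\xi)^+), \iota F^+]$ vanishes as before, and the surviving term equals $-[\iota((-i\xi)^+), [\iota^0 F^-, \iota F^+]] = [\iota^0 E, \iota((-i\xi)^+)]$ by the first bullet. Since $\iota((-i\xi)^+) \in \p^{(+1)}$ and $\iota^0 E$ acts as the grading operator, $[\iota^0 E, \iota((-i\xi)^+)]$ is a nonzero scalar multiple of $\iota((-i\xi)^+) \in \iota\mathfrak{g}^{(+1)}$. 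This proves $[\iota^0\mathfrak{g}^{(-1)}, \iota\mathfrak{g}^{(+2)}] \subseteq \iota\mathfrak{g}^{(+1)}$, and letting $\xi$ range over $\mathbb{C}^n$ shows these brackets span all of $\iota\mathfrak{g}^{(+1)}$, yielding the reverse inclusion and hence equality.
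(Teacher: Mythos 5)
Your proof is correct and follows essentially the same route as the paper: both arguments boil down to hitting the relation $[\iota\xi^+,\iota F^+]=0$ with $\mathrm{ad}(\iota^0F^-)$ (your ``self-referential'' Jacobi collapse is exactly the paper's expansion of $\mathrm{ad}(\iota^0F^-)^2[\iota\xi^+,\iota F^+]=0$ after substituting $\Theta\iota=\iota^0$), and both then place $[\iota^0F^-,\iota F^+]-\iota^0E$ in $Z(\iota^0\mathfrak{n}^-)$ and kill it. The only (harmless) divergence is that last step: the paper compares graded components ($\p^{(0)}$ against $Z(\iota^0\mathfrak{n}^-)\subset\p^{(-2)}\oplus\p^{(-1)}$), whereas you use vanishing at $\mathrm{O}$ together with the frame-field property of the centralizer; both are valid one-liners.
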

\begin{proof}
By $[\iota\xi^+, \iota F^+] = 0$, applying $ \mathrm{ad}(\iota^0F^-)^2$,
\[
2[\mathrm{ad}(\iota^0F^-)\iota\xi^+, \mathrm{ad}(\iota^0F^-)\iota F^+] + [\iota\xi^+, \mathrm{ad}(\iota^0F^-)^2\iota F^+]= 0.
\]
Then by $\Theta \iota = \iota^0$, 
\[
[\iota^0(i\xi)^-, \mathrm{ad}(\iota^0F^-)\iota F^+] - \iota^0(i\xi)^-= 0.
\]
It follows that $\mathrm{ad}(\iota^0F^-)\iota F^+ - E \in Z(\iota^0\mathfrak{n}^-)$.
Since $\mathrm{ad}(\iota^0F^-)\iota F^+ - E \in \p^{(0)}$, we see that $\mathrm{ad}(\iota^0F^-)\iota F^+ - E = 0$.
Thus we have proved the first item.

Applying $ \mathrm{ad}(\iota^0F^-)$ to $[\iota\xi^+, \iota F^+] = 0$, 
\[
[\mathrm{ad}(\iota^0F^-)\iota\xi^+, \iota F^+] + [\iota\xi^+, \mathrm{ad}(\iota^0F^-)\iota F^+]= 0.
\]
By $\Theta \iota = \iota^0$ and $[\iota^0F^-, \iota F^+] = E$,
\[
[\iota^0(i\xi)^-, \iota F^+] + [\iota\xi^+, E]= 0.
\]
Thus $\iota\xi^+ = [\iota^0(i\xi)^-, \iota F^+]$ for all $\xi \in \mathbb{C}^n$ and the second item follows.
\end{proof}

By the above lemma, we are left to prove the following:
\begin{prop}
If $X \in \p^{(+2)}$ is sufficiently close to $\iota^0F^+$ satisfying 
\[
[\iota^0 F^-, X] = E, \; [[\iota^0\mathfrak{g}^{(-1)}, X], X] = 0,
\]
then $X = \iota^0F^+$.
\end{prop}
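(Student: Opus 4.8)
The plan is to pin down $X$ in two stages. The linear constraint $[\iota^0 F^-, X] = E$ confines $X$ to a finite-dimensional affine family through $X_0 := \iota^0 F^+$, and the quadratic constraint $[[\iota^0\mathfrak{g}^{(-1)}, X], X] = 0$ then becomes a quadratic equation whose linearization at $X_0$ I expect to be injective; injectivity together with the hypothesis that $X$ is close to $X_0$ forces $X = X_0$.

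\emph{Reduction via the linear constraint.} Since $\iota^0 F^-$ spans $\iota^0\mathfrak{g}^{(-2)}$, it is a nonzero multiple of $\partial_{2n+1}$ (indeed $\iota^0 F^- = -\partial_{2n+1}$, by Lemma \ref{stdalg}). Writing $X = \sum_j X_j\partial_j$, the equation $[\iota^0 F^-, X] = E$ prescribes the functions $\partial_{2n+1}X_j$, with the same prescription satisfied by $X_0$ (which obeys both constraints, being the image of $F^+$; in particular $[\iota^0 F^-, X_0] = E$). Hence $Z := X - X_0$ has $\partial_{2n+1}Z_j \equiv 0$ for all $j$, so every component of $Z$ is a polynomial in $x' = (x_1,\dots,x_{2n})$ alone. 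As $Z\in\p^{(+2)}$, weighted homogeneity forces $Z$ into the finite-dimensional space
\[
\mathcal{Y} := \{\, W\in\p^{(+2)} \mid [\partial_{2n+1}, W] = 0 \,\},
\]
whose members have $\partial_1,\dots,\partial_{2n}$-components homogeneous of degree $3$ and $\partial_{2n+1}$-component homogeneous of degree $4$ in $x'$. It therefore suffices to prove $Z = 0$.

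\emph{Expanding the quadratic constraint.} Substituting $X = X_0 + Z$ into $[[\iota^0\xi^-, X], X] = 0$ and using $[[\iota^0\xi^-, X_0], X_0] = 0$ — which holds because $\mathrm{ad}(F^+)^2\xi^- = 0$ in $\mathfrak{g}$ — gives, for every $\xi\in\mathbb{C}^n$,
\[
L_\xi(Z) = -[[\iota^0\xi^-, Z], Z], \qquad L_\xi(Z) := [[\iota^0\xi^-, X_0], Z] + [[\iota^0\xi^-, Z], X_0] \in \p^{(+3)}.
\]
The left-hand side is linear and the right-hand side quadratic in $Z$. Let $L : \mathcal{Y} \to \mathrm{Hom}_{\R}(\mathfrak{g}^{(-1)}, \p^{(+3)})$ be the assembled linear map $Z \mapsto (\iota^0\xi^- \mapsto L_\xi(Z))$.

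\emph{Injectivity and conclusion.} The crux is that $L$ is injective on $\mathcal{Y}$. Using the Jacobi identity and the bracket relations listed before Lemma \ref{Theta} — in particular that $[\iota^0\xi^-, X_0]$ is a real multiple of $\iota^0(i\xi)^+$ and that $\mathrm{ad}(E)$ acts on $\p^{(r)}$ as a scalar depending only on $r$ — the equation $L_\xi(Z) = 0$ reduces to a weight-$(+3)$ identity relating $[\iota^0(i\xi)^+, Z]$ and $[\iota^0\xi^-, [X_0, Z]]$. Since $\mathcal{Y}$ is a fixed finite-dimensional space of vector fields with homogeneous polynomial coefficients of known degree, this is a finite linear system, and I would verify $\ker L = 0$ by a direct computation from the explicit coordinate expressions for $\iota^0\xi^-$, $\iota^0\xi^+$ and $X_0$ furnished by Lemma \ref{stdalg}; the weighted grading confines this to finitely many monomials. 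Granting injectivity, choose $c > 0$ with $\|L(Z)\| \ge c\|Z\|$ on $\mathcal{Y}$; since the right-hand side is quadratic, $\|[[\iota^0\xi^-, Z], Z]\| \le C\|Z\|^2$, so $c\|Z\| \le C\|Z\|^2$, and as $X$ is sufficiently close to $X_0$ the field $Z$ is small, whence $Z = 0$ and $X = \iota^0 F^+$. The main obstacle is precisely this injectivity: unlike the earlier persistence arguments, applying $\mathrm{ad}(\iota^0 F^-)$ to the constraint only returns tautologies (the relevant data live in weight $+3$ and cannot be lowered), so the ``graded piece $\cap$ centralizer $= 0$'' mechanism does not apply and the kernel must be computed by hand; closeness of $X$ to $X_0$ enters only in the final estimate.
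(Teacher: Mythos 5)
Your reduction is sound and matches the paper's: the paper also argues via the implicit function theorem, so that everything comes down to showing that the \emph{linearized} system
\[
[\iota^0 F^-, Z] = 0, \qquad [[\iota^0\xi^-, Z], \iota^0F^+] + [[\iota^0\xi^-, \iota^0F^+], Z] = 0 \quad (\xi \in \mathbb{C}^n)
\]
has only the trivial solution $Z = 0$ in $\p^{(+2)}$; your map $L$ on $\mathcal{Y}$ is exactly this system, and your quadratic estimate at the end is a legitimate substitute for the implicit function theorem. The genuine gap is that you never prove $\ker L = 0$: you defer it to ``a direct computation from the explicit coordinate expressions,'' and you yourself identify this as the crux. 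That is not a small omission — the injectivity is the entire mathematical content of the proposition, and the space $\mathcal{Y}$ (cubic coefficients in $x_1,\dots,x_{2n}$ for the first $2n$ components, quartic for the last) has dimension growing like $n^4$, so a monomial-by-monomial check is not a uniform argument for all $n$ and is not actually carried out for any $n$.

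Your parenthetical diagnosis is also off the mark, and correcting it points to how the paper closes the gap. You claim the ``graded piece $\cap$ centralizer $=0$'' mechanism cannot apply because the data cannot be lowered in weight; but the weight-lowering operators to use are $\mathrm{ad}(\iota^0\xi^-)$ for $\xi^- \in \mathfrak{g}^{(-1)}$ (weight $-1$), not $\mathrm{ad}(\iota^0 F^-)$. The paper forms the symmetric multilinear maps $\Phi(\xi_1,\dots,\xi_4) = \mathrm{ad}(\iota^0\xi_1^-)\cdots\mathrm{ad}(\iota^0\xi_4^-)Z \in \p^{(-2)}$ and $\Psi(\xi_1,\xi_2,\xi_3) \in \p^{(-1)}$, kills them by applying $\mathrm{ad}(\iota^0\eta^-)^5$, $\mathrm{ad}(\iota^0\eta^-)\mathrm{ad}(\iota^0\xi^-)^4$ and $\mathrm{ad}(\iota^0\xi^-)^4$ to the linearized relation together with the bracket identities in $\mathfrak{su}(n+1,1)$ (specializing to $\eta = -i\xi$ and polarizing), and then observes that $\mathrm{ad}(\iota^0\xi_1^-)\mathrm{ad}(\iota^0\xi_2^-)Z$ lies both in the centralizer $Z(\iota^0\n^-) \subset \p^{(-2)}\oplus\p^{(-1)}$ and in $\p^{(0)}$, hence vanishes; descending twice more gives $Z=0$. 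Without this (or some replacement for it), your argument does not establish the proposition.
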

\begin{proof}
To show the proposition, we will use implicit function theorem.
Identifying the tangent space $T_{\iota^0F^+} \p^{(+2)}$ with $\p^{(+2)}$ itself, it is sufficiet to show that $X \in \p^{(+2)}$ satisfying
\begin{align}
[\iota^0 F^-, X] &= 0, \label{norm}\\ 
[[\iota^0\xi^-, X], \iota^0F^+] + [[\iota^0\xi^-,  \iota^0F^+], X] &= 0 \label{rel}
\end{align}
for all $\xi^- \in \mathfrak{g}^{(-1)}$ is $X = 0$.

Define the $\R$-multilinaer map $\Phi: (\mathbb{C}^n)^4 \to \p^{(-2)}$ by
\[
\Phi(\xi_1, \xi_2, \xi_3, \xi_4) = \mathrm{ad}(\iota^0\xi_1^-)\mathrm{ad}(\iota^0\xi_2^-)\mathrm{ad}(\iota^0\xi_3^-)\mathrm{ad}(\iota^0\xi_4^-)X.
\]
By equation (\ref{norm}), we see that $\Phi$ is symmetric with respect to $\xi_i$.
We will show that $\Phi = 0$.
Put $\alpha = \mathrm{ad}(\iota^0\xi^-)$ and $\beta = \mathrm{ad}(\iota^0\eta^-)$ for $\xi, \eta \in \mathbb{C}^n$.
Applying $\beta^5$ to (\ref{rel}),
\[
10[\beta^2\alpha X, \beta^3\iota^0F^+] + 10[\beta^2\alpha \iota^0F^+, \beta^3X]  + 5[\beta\alpha \iota^0F^+, \beta^4X]= 0,
\]
where we used (\ref{norm}) and $\mathrm{ad}(F^-)\mathrm{ad}(\eta^-)^2F^+ = 0$.
When $\eta = -i \xi$, since $\mathrm{ad}(\eta^-)^3F^+ = \|\xi\|^2\xi^-$ and $\mathrm{ad}(\eta^-)\mathrm{ad}(\xi^-)F^+ = \|\xi\|^2E$, it follows that
\[
-10\alpha\beta^2\alpha X +10\beta^4 X + 5[E, \beta^4 X] = 0.
\]
Thus
\begin{equation}\label{PHI}
\Phi(\xi, \xi, i\xi, i\xi) = 0
\end{equation}
for all $\xi \in \mathbb{C}^n$.
Applying $\mathrm{ad}(\iota^0\eta^-)\mathrm{ad}(\iota^0\xi^-)^4$ to (\ref{rel}),
\begin{multline*}
4[\alpha^4 X, \beta\alpha\iota^0F^+] + 6[\alpha^3 X, \beta\alpha^2\iota^0F^+] + 4[\beta\alpha^2 X, \alpha^3\iota^0F^+] \\
+ 6[\alpha^3 \iota^0F^+, \beta\alpha^2X] + 4[\beta\alpha^2 \iota^0F^+, \alpha^3X]  = 0,
\end{multline*}
where we denoted  $\alpha = \mathrm{ad}(\iota^0\xi^-)$, $\beta = \mathrm{ad}(\iota^0\eta^-)$ and used the equations (\ref{norm}) and $\mathrm{ad}(F^-)\mathrm{ad}(\xi^-)^2F^+ = 0$.
When $\eta = -i \xi$, using  $\mathrm{ad}(\eta^-)\mathrm{ad}(\xi^-)F^+ = \|\xi\|^2E$, $\mathrm{ad}(\eta^-)\mathrm{ad}(\xi^-)^2F^+ =\|\xi\|^2 \xi^-$, $\mathrm{ad}(\xi^-)^3F^+ = -\|\xi\|^2\eta^-$, it follows that
\[
4[\alpha^4 X, E] - 6\alpha^4 X+ 4\beta^2\alpha^2 X - 6 \beta^2\alpha^2X + 4\alpha^4X  = 0.
\]
By (\ref{PHI}),
\[
\Phi(\xi, \xi, \xi, \xi) = 0
\]
for $\xi \in \mathbb{C}^n$.
Since $\Phi$ is an $\R$-multilinear symmetric map, it follows that $\Phi = 0$.

Define the $\R$-multilinear map $\Psi: (\mathbb{C}^n)^3 \to \p^{(-1)}$ by
\[
\Psi(\xi_1, \xi_2, \xi_3) = \mathrm{ad}(\iota^0\xi_1^-)\mathrm{ad}(\iota^0\xi_2^-)\mathrm{ad}(\iota^0\xi_3^-)X.
\]
By equation (\ref{norm}), $\Psi$ is symmetric .
We will show that $\Psi = 0$.
Applying $\mathrm{ad}(\iota^0\xi^-)^4$ to (\ref{rel}), since $\Phi =0$,
\[
[\Psi(\xi, \xi, \xi), \iota^0J] + \Psi(i\xi, \xi, \xi) = 0,
\]
where
\[
J = \left(\begin{array}{ccc}
-i& 0& 0\\
0&0&0\\
0&0&i
\end{array}\right) \in \mathfrak{g}^{(0)}.
\]
As $\Psi$ is an $\R$-multilinear symmetric map,
\begin{equation}\label{PsiJ}
3[\Psi(\xi, \eta, \zeta), \iota^0J] + \Psi(i\xi, \eta, \zeta)+ \Psi(\xi, i\eta, \zeta) + \Psi(\xi, \eta, i\zeta)  = 0
\end{equation}
for $\xi, \eta, \zeta \in \mathbb{C}^n$.
Applying $\mathrm{ad}(\iota^0\eta^-)^4$ to (\ref{rel}), when $\eta = -i\xi$,
\[
-4\Psi(\xi, \eta, \xi) + 6[\Psi(\eta, \eta, \xi), \iota^0J] +2\Psi(\eta, \eta, \eta) = 0.
\]
By (\ref{PsiJ}),
\[
\Psi(\eta, \eta, \eta) = 0.
\]
Thus $\Psi = 0$.

As we have seen in Section \ref{Heis}, the centralizer $Z(\iota^0\n^-)$ of $\iota^0\n^-$ in $\mathfrak{X}(\R^{2n+1})$ is contained in $\p^{(-2)}\oplus\p^{(-1)}$.
Since $\Psi$ is identically $0$, for all $\xi_1, \xi_2 \in \mathbb{C}^n$, $\mathrm{ad}(\iota^0\xi_1^-)\mathrm{ad}(\iota^0\xi_2^-)X \in Z(\iota^0\n^-) \subset \p^{(-2)}\oplus\p^{(-1)}$.
On the other hand, since $\iota^0\xi_1^-, \iota^0\xi_2^- \in \p^{(-1)}$ and $X \in \p^{(+2)}$, it follows that $\mathrm{ad}(\iota^0\xi_1^-)\mathrm{ad}(\iota^0\xi_2^-)X \in \p^{(0)}$.
So $\mathrm{ad}(\iota^0\xi_1^-)\mathrm{ad}(\iota^0\xi_2^-)X = 0$.
Similarly, we see that $\mathrm{ad}(\iota^0\xi^-)X = 0$ for all $ \xi \in \mathbb{C}^n$, and that $X = 0$.
\end{proof}


\begin{thebibliography}{99}

\bibitem{Asaoka} M. Asaoka, Rigidity of certain solvable actions on the sphere, Geom. Topol., {\bf16} (2012), no. 3, 1835--1857.

\bibitem{BW} L. Burslem and A. Wilkinson, Global rigidity of solvable group actions on $S^1$, Geom. Topol., {\bf8} (2004), 877--924 (electronic).

\item K. Chen, Equivalence and decomposition of vector fields about an elementary critical point, Amer. J. Math., {\bf85} (1963), 693--722.

\bibitem{Ghys} \'E. Ghys, Rigidit\'e diff\'erentiable des groupes fuchsiens, Inst. Hautes
\'Etudes Sci. Publ. Math., {\bf78} (1993), 163--185.

\item W. M. Goldman, Complex hyperbolic geometry, Oxford Mathematical Monographs. Oxford Science Publications, The Clarendon Press, Oxford University Press, New York, 1999.

\bibitem{Malcev} A. Mal'cev, On a class of homogeneous spaces, Amer. Math. Soc. Transl., {\bf9} (1962), 276--307. 
\bibitem{Raghunathan} M. S. Raghunathan, Discrete Subgroups of Lie Groups, Springer, New York, 1972.

\bibitem{Sternberg} S. Sternberg, Local contractions and a theorem of Poincar\'e, Amer. J. Math., {\bf79} (1957), 809--824.

\bibitem{Weil} A. Weil, Remarks on the cohomology of groups, Ann. of Math., (2) {\bf80} (1964), 149--157. 
\end{thebibliography}
\end{document}